\newtheorem{theorem}{Theorem}[section]
\newtheorem{definition}{Definition}[section]
\newtheorem{proposition}{Proposition}[section]
\newtheorem{corollary}{Corollary}[section]
\newtheorem{lemma}{Lemma}[section]
\newenvironment{sproof}{%
  \proof}{\endproof}
\DeclareMathOperator*{\NE}{NE}
\DeclareMathOperator*{\argmax}{arg\,max}
\DeclareMathOperator*{\argmin}{arg\,min}
\def\env@dmatrix{\hskip -\arraycolsep
  \let\@ifnextchar\new@ifnextchar
  \extrarowheight=2ex
  \array{*\c@MaxMatrixCols{>{\displaystyle}c}}}
\newenvironment{bdmatrix}
  {\left(\env@dmatrix}
  {\endmatrix\right)}
\author{
  Ioannis Anagnostides\\
  \texttt{ianagnost@student.ethz.ch}
  \and
  Paolo Penna\\
  \texttt{paolo.penna@inf.ethz.ch}
}
\date{}
\title{Solving Zero-Sum Games through Alternating Projections}
\begin{document}

\setcounter{page}{0}

\maketitle

\begin{abstract}

In this work, we establish near-linear and strong convergence for a natural first-order iterative algorithm that simulates Von Neumann's Alternating Projections method in zero-sum games. First, we provide a precise analysis of Optimistic Gradient Descent/Ascent (OGDA) -- an optimistic variant of Gradient Descent/Ascent -- for \emph{unconstrained} bilinear games, extending and strengthening prior results along several directions. Our characterization is based on a closed-form solution we derive for the dynamics, while our results also reveal several surprising properties. Indeed, our main algorithmic contribution is founded on a geometric feature of OGDA we discovered; namely, the limit points of the dynamics are the orthogonal projection of the initial state to the space of attractors.

Motivated by this property, we show that the equilibria for a natural class of \emph{constrained} bilinear games are the intersection of the unconstrained stationary points with the corresponding probability simplexes. Thus, we employ OGDA to implement an Alternating Projections procedure, converging to an $\epsilon$-approximate Nash equilibrium in $\widetilde{\mathcal{O}}(\log^2(1/\epsilon))$ iterations. Our techniques supplement the recent work in pursuing last-iterate guarantees in min-max optimization. Finally,  we illustrate an -- in principle -- trivial reduction from any game to the assumed class of instances, without altering the space of equilibria.
\end{abstract}

\clearpage

\section{Introduction}

The classical problem of finding a Nash equilibrium in multi-agent systems has been a topic of prolific research in several areas, including Mathematics, Economics, Algorithmic Game Theory, Optimization \cite{1950PNAS...36...48N,sion1958,10.1145/1132516.1132527,10.5555/1296179,10.1007/s10107-004-0552-5} and more recently Machine Learning in the context of Generative Adversarial Networks \cite{NIPS2014_5423,arjovsky2017wasserstein} and multi-agent reinforcement learning \cite{10.5555/645527.657296}. The inception of this endeavor can be traced back to Von Neumann's celebrated min-max theorem, which asserts that

\begin{equation}
    \label{equation:min-max}
    \min_{\mathbf{x} \in \Delta_n} \max_{\mathbf{y} \in \Delta_m} \mathbf{x}^T A \mathbf{y} = \max_{\mathbf{y} \in \Delta_m} \min_{\mathbf{x} \in \Delta_n}  \mathbf{x}^T A \mathbf{y},
\end{equation}
where $\Delta_n$ and $\Delta_m$ are probability simplexes and $A$ the matrix of the game. To be more precise, \eqref{equation:min-max} implies that an equilibrium -- a pair of randomized strategies such that neither player can benefit from a unilateral deviation -- always exists; yet, the min-max theorem does not inform us on whether natural learning algorithms can converge to this equilibrium with a reasonable amount of computational resources. This question has given rise to intense research, commencing from the analysis of fictitious play by J. Robinson \cite{10.2307/1969530} and leading to the development of the no-regret framework \cite{cesa-bianchi_lugosi_2006,10.1007/11503415_42,v008a006}. However, an unsatisfactory characteristic of these results is that the minimax pair is only obtained in an average sense, without any last-iterate guarantees. Indeed, a regret-based analysis cannot distinguish between a self-stabilizing system and one with recurrent cycles. In this context, it has been extensively documented that limit cycles -- or more precisely \emph{Poincaré recurrences} -- are persistent in broad families of no-regret schemes, such as Mirror Descent and Follow-The-Regularized-Leader \cite{mertikopoulos2017cycles, palaiopanos2017multiplicative, 10.1145/2840728.2840757, doi:10.1137/1.9781611973402.64}.

\begin{figure}[!ht]
    \centering
    \includegraphics[scale=0.55]{./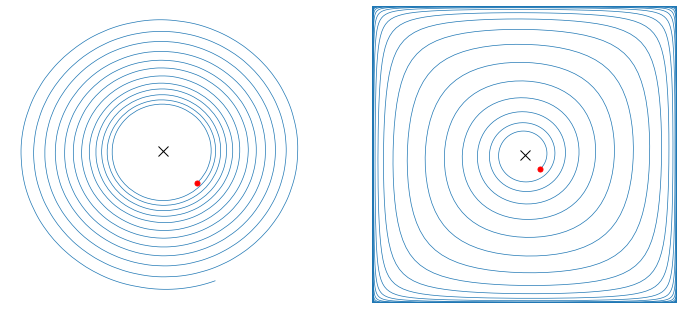}
    \caption{The behavior of Projected Gradient Descent (leftmost image) and Entropic - or Exponentiated - Descent (rightmost image) for the game of matching pennies. Symbol 'x' represents the unique (uniform) equilibrium, while the red point corresponds to the initial state of the system. Although both algorithms suffer no-regret -- as instances of Mirror Descent, they exhibit cyclic behavior around the equilibrium.}
\end{figure}

The crucial issue of last-iterate convergence was addressed by Daskalakis et al.~\cite{DBLP:journals/corr/abs-1711-00141} in two-player and zero-sum games in the context of training Generative Adversarial Networks. In particular, they introduced Optimistic Gradient Descent/Ascent (henceforth abbreviated as OGDA), a simple variant of Gradient Descent/Ascent (GDA) that incorporates a prediction term on the next iteration's gradient, and they proved point-wise convergence in unconstrained  bilinear min-max optimization problems. It is clear that the stabilizing effect of \emph{optimism} is of fundamental importance within the scope of Game Theory, which endeavors to study and indeed, control dynamical systems of autonomous agents. However, despite recent follow-up works \cite{liang2018interaction,mokhtari2019unified}, an exact characterization of OGDA and its properties remains largely unresolved. In this context, the first part of our work provides a precise analysis of OGDA, resolving several open issues and illuminating surprising attributes of the dynamics.

Naturally, the stability of the learning algorithm has also emerged as a critical consideration in the more challenging and indeed, relevant case of constrained zero-sum -- or simply zero-sum -- games. Specifically, Daskalakis and Panageas \cite{daskalakis2018lastiterate} analyzed an optimistic variant of Multiplicative Weight Updates and they showed last-iterate convergence in games with a unique equilibrium; yet, the rate of convergence was left entirely open. The same issue arises in the analysis of Optimistic Mirror Descent by Mertikopoulos et al.~\cite{mertikopoulos2018optimistic}; in the parlance of the Variational Inequality framework, the main challenge is that the operator -- the flow -- in bilinear games is weakly-monotone -- just as linear functions are convex in a weak sense -- and standard tools appear to be of no use. The main algorithmic contribution of our paper is a natural variant of projected OGDA that simulates an Alternating Projections procedure; as such, it exhibits a surprising near-linear and strong convergence in constrained games. Our approach is based on a non-trivial geometric property of OGDA: the limit points are -- under certain hypotheses -- positively correlated with the Nash equilibria of the constrained game. Although our algorithm is not admitted in the no-regret framework, it is based entirely on the no-regret (projected) OGDA dynamics. Importantly, our techniques and the connections we establish could prove useful in the characterization of the convergence's rate in other algorithms.

\paragraph{Related Work}

Our work follows the line of research initiated by \cite{DBLP:journals/corr/abs-1711-00141}; one of their key results was proving through an inductive argument that OGDA, a simple variant of GDA, exhibits point-wise convergence to the space of Nash equilibria for any unconstrained  bilinear game. The convergence rate was later shown to be linear in \cite{liang2018interaction}. Moreover, this result was also proven in \cite{mokhtari2019unified}, where the authors illustrated that OGDA can be examined as an approximate version of the proximal point method; through this prism, they also provided a convergence guarantee for convex-concave games. However, both of the aforementioned works consider only the special case of a square  matrix with full-rank.

The underlying technique of optimism employed in OGDA has gradually emerged in the fields of Online Learning and Convex Optimization \cite{NIPS2015_5763,pmlr-v32-steinhardtb14,wang2018acceleration}, and has led to very natural algorithmic paradigms. More precisely, optimism consists of exploiting the predictability or smoothness of the future cost functions, incorporating additional knowledge in the optimization step. In fact, in the context of zero-sum games, this technique has guided the optimal no-regret algorithms, leading to a regret of $\Theta(1/T)$, where $T$ the number of iterations \cite{DASKALAKIS2015327, NIPS2013_5147}.

It is well-established that a minimax pair in zero-sum games can be computed in a centralized manner via an induced Linear Program (LP). In particular, interior point methods and variants of Newton's method are known to exhibit very fast convergence, even super-linear -- e.g. $\mathcal{O}(\log \log \left( 1/\epsilon \right))$; see \cite{POTRA2000281,10.5555/265009}. We also refer to \cite{Rebennack2009} for the ellipsoid method, another celebrated approach for solving LPs. However, these algorithms are not commonly used in practice for a number of reasons. Indeed, first-order methods have dominated the attention both in the literature and in practical applications, mainly due to their simplicity, their robustness - e.g. noise tolerance - and the limited computational cost of implementing a single step, relatively to the aforementioned algorithms \cite{ruder2016overview}. Another reference point for our work should be Nesterov's iterative algorithm \cite{10.1007/s10107-004-0552-5} that strongly converges to an $\epsilon$-approximate equilibrium after $\mathcal{O}(1/\epsilon)$ iterations.

In a closely related work, Mertikopoulos et. al \cite{mertikopoulos2018optimistic} established convergence for projected OGDA and Extra-Gradient dynamics. Our main algorithm is also established as projected variant of OGDA, but instead of projecting after a single step, the projection is performed after multiple iterations. A no-regret analysis of Optimistic Mirror Descent can be found in \cite{kangarshahi2018lets}. For a characterization of the stationary points of GDA and OGDA and their nexus to Nash equilibria beyond convex-concave settings we refer to \cite{daskalakis2018limit}. Finally, several algorithms have been proposed specifically for solving saddle-point problems; we refer to \cite{adolphs2018local, schfer2019competitive, mazumdar2019finding} and references thereof.

\paragraph{Our Contributions}
In the first part of our work (\Cref{section:OGDA}), we provide a thorough and comprehensive analysis of Optimistic Gradient Descent/Ascent in unconstrained bilinear games. Specifically, we first derive an exact and concise solution to the dynamics in Subsection \ref{subsection:dynamics-solution}; through this result, we establish convergence from any initial state (\Cref{theorem:convergence}). Note that the guarantee in \cite{DBLP:journals/corr/abs-1711-00141} was only shown for cherry-picked initial conditions, an unnatural restriction for a linear dynamical system. We also show that the rate of convergence is linear (\Cref{corollary:convergence}); this was proved in follow-up works \cite{liang2018interaction,mokhtari2019unified} for the specific case of a square  non-singular matrix. Our analysis is also tighter with respect to the learning rate, yielding a broader stability region compared to the aforementioned works. In fact, in \Cref{proposition:convergence_rate} we derive the exact rate of convergence with respect to the learning rate and the spectrum of the matrix. One important implication is that within our stable region, increasing the learning rate will accelerate the convergence of the system, a property which is of clear significance in practical implementations of the algorithm. \Cref{proposition:convergence_rate} also implies a surprising discontinuity on the behavior of the system: an arbitrarily small noise may dramatically alter the rate of convergence (see \Cref{section:discontinuity}). Another counter-intuitive consequence of our results is that OGDA could actually converge with negative learning rate. Finally, we reformulate our solution to illustrate the inherent oscillatory components in the dynamics, albeit with a vanishing amplitude (\Cref{section:oscillations}). Throughout \Cref{section:OGDA}, we mainly use techniques and tools from Linear Algebra and Spectral Analysis -- such as the Spectral Decomposition, while we also derive several non-trivial identities that could of independent interest. We consider our analysis to be simpler than the existing ones.

In the second part, we turn to constrained games and we commence (in \Cref{section:nash-constrained}) by showing that the Nash equilibria are the intersection of the unconstrained stationary points with the corresponding probability simplexes, assuming that the value of the (constrained) game is zero and that there exists an interior equilibrium (see \Cref{proposition:NE}). Thus, under these conditions, the optimal strategies can be expressed as the intersection of two closed and convex sets and hence, they can be determined through Von Neumann's Alternating Projections method. In this context, our algorithm (Algorithm \ref{algorithm:projected-OGDA}) is based on a non-trivial structural property of OGDA: the limit points are the orthogonal projection of the initial state to the space of attractors (see \Cref{theorem:projection} for a formal statement). As a result, OGDA is employed in order to implement the Alternating Projections procedure, and subsequently, we manage to show with simple arguments that our algorithm yields an $\epsilon$-approximate Nash equilibrium with $\widetilde{\mathcal{O}}(\log^2 (1/\epsilon))$ iterations (\Cref{theorem:fast_convergence}).

Moreover, our algorithm also converges in the last-iterate sense, supplementing the recent works in this particular direction \cite{mertikopoulos2018optimistic,daskalakis2018lastiterate}; however, unlike the aforementioned results, our analysis gives a precise and parametric characterization of the convergence's rate; thus, our techniques could prove useful in the analysis of other dynamics. We also believe that the variant of projected Gradient Descent we introduced, namely performing the projection to the feasible set only after multiple iterations, could be of independent interest in the regime of Optimization. The main caveat of our approach is that we consider (constrained) games that have an interior equilibrium and value $v = 0$; we ameliorate this limitation by showing an -- in principle -- trivial reduction from any arbitrary game to the assumed class of instances. We emphasize on the fact that the equilibria of the game remain -- essentially -- invariant under our reduction. Importantly, our approach offers a novel algorithmic viewpoint on a fundamental problem in Optimization and Game Theory.

\paragraph{Alternating Projections} Our main algorithm (Algorithm \ref{algorithm:projected-OGDA}) is an approximate instance of Von Neumann's celebrated method of Alternating Projections. More precisely, if $\mathcal{H}_1$ and $\mathcal{H}_2$ are closed subspaces of a Hilbert space $\mathcal{H}$, he showed that alternatively projecting to $\mathcal{H}_1$ and $\mathcal{H}_2$ will eventually converge in norm to the projection of the initial point to $\mathcal{H}_1 \cap \mathcal{H}_2$. This method has been applied in many areas, including Stochastic Processes \cite{10.1137/1113049}, solving linear equations \cite{10.1007/BF01436376} and Convex Optimization \cite{10.2307/2132495,BREGMAN1967200,10.5555/993483}, while it was also independently discovered by Wiener \cite{Wiener1955}. Moreover, a generalization of this algorithm to multiple closed subspaces was given by Halperin \cite{halperin}, introducing the method of \emph{cyclic alternating projections}. The rate of convergence was addressed in \cite{10.2307/1990404,badea2010rate,10.1007/BF02551235}, and in particular it was shown that for $2$ closed subspaces -- and under mild assumptions -- the rate of convergence is linear and parameterized by the \emph{Friedrichs angle} between the subspaces. Beyond affine spaces, Von Neumann's method also converges when the sets are closed and convex \footnote{In this context, the method of alternating projections is sometimes referred to as projection onto convex sets or POCS}, assuming a non-empty intersection. In fact, certain results in this domain are applicable even when the intersection between the subsets is empty; e.g. see Dijkstra's projection algorithm.

\section{Preliminaries}
\label{section:preliminaries}

\paragraph{Nash equilibrium} Consider a continuously differentiable function $f: \mathcal{X} \times \mathcal{Y} \mapsto \mathbb{R}$ that represents the objective function of the game, with $f(\mathbf{x}, \mathbf{y})$ the payoff of player $\mathcal{X}$ to player $\mathcal{Y}$ under strategies $\mathbf{x} \in \mathcal{X}$ and $\mathbf{y} \in \mathcal{Y}$ respectively. A pair of strategies $(\mathbf{x}^*, \mathbf{y}^*) \in \mathcal{X} \times \mathcal{Y}$ is a Nash equilibrium -- or a \emph{saddle-point} of $f(\mathbf{x}, \mathbf{y})$ -- if $\forall (\mathbf{x}, \mathbf{y}) \in \mathcal{X} \times \mathcal{Y}$,

\begin{equation}
    \label{equation:definition-NE}
 f(\mathbf{x}^*, \mathbf{y}) \leq f(\mathbf{x}^*, \mathbf{y}^*) \leq f(\mathbf{x}, \mathbf{y}^*).
\end{equation}

A pair of strategies $(\mathbf{x}^*, \mathbf{y}^*)$ will be referred to as an $\epsilon$-\emph{approximate} Nash equilibrium if it satisfies \eqref{equation:definition-NE} with up to an $\epsilon$ additive error. We may sometimes refer to player $\mathcal{X}$ as the minimizer and player $\mathcal{Y}$ as the maximizer. Throughout this paper, we consider exclusively the case of $f(\mathbf{x}, \mathbf{y}) = \mathbf{x}^T A \mathbf{y}$, while in a constrained game -- or simply (finite) zero-sum game in the literature of Game Theory -- we additionally require that $\mathcal{X} = \Delta_n$ and $\mathcal{Y} = \Delta_m$.

\paragraph{Optimistic Gradient Descent/Ascent} Let us focus on the unconstrained case, i.e. $\mathcal{X} = \mathbb{R}^n$ and $\mathcal{Y} = \mathbb{R}^m$. The most natural optimization algorithm for solving the induced saddle-point problem is by performing simultaneously Gradient Descent on $\mathbf{x}$ and Gradient Ascent on $\mathbf{y}$; formally, if $\eta>0$ denotes some positive constant -- typically referred to as the \emph{learning rate}, GDA can be described as follows:
\begin{equation}
    \label{equation:GDA}
    \begin{split}
    \mathbf{x}_{t} = \mathbf{x}_{t-1} - \eta \nabla_{\mathbf{x}} f(\mathbf{x}_{t-1}, \mathbf{y}_{t-1}), \\
    \mathbf{y}_{t} = \mathbf{y}_{t-1} + \eta \nabla_{\mathbf{y}} f(\mathbf{x}_{t-1}, \mathbf{y}_{t-1}).
    \end{split}
\end{equation}

However, there are very simple examples where the system of equations \eqref{equation:GDA} diverges; for instance, when $f(x, y) = x y$ with $x, y \in \mathbb{R}$ and $(x_0, y_0) \neq (0, 0)$, GDA is known to diverge for any learning rate $\eta > 0$. This inadequacy has motivated \emph{optimistic} variants of GDA that incorporate some prediction on the next iteration's gradient through the regularization term (recall that Gradient Descent can be viewed as an instance of Follow-The-Regularized-Leader (FTRL) with Euclidean regularizer \cite{MAL-018}). With OGDA we refer to the optimization variant that arises when the prediction of the next iteration's gradient is simply the previously observed gradient; this yields the following update rules:
\begin{equation}
    \label{equation:OGDA}
    \begin{split}
    \mathbf{x}_{t} = \mathbf{x}_{t-1} - 2 \eta \nabla_{\mathbf{x}} f(\mathbf{x}_{t-1}, \mathbf{y}_{t-1}) + \eta \nabla_{\mathbf{x}} f(\mathbf{x}_{t-2}, \mathbf{y}_{t-2}), \\
    \mathbf{y}_{t} = \mathbf{y}_{t-1} + 2 \eta \nabla_{\mathbf{y}} f(\mathbf{x}_{t-1}, \mathbf{y}_{t-1}) - \eta \nabla_{\mathbf{y}} f(\mathbf{x}_{t-2}, \mathbf{y}_{t-2}).
    \end{split}
\end{equation}

A probability vector $\mathbf{p}$ is referred to as \emph{interior} if $\mathbf{p}_i > 0, \forall i$; subsequently, an equilibrium point $(\mathbf{x}, \mathbf{y}) \in \Delta_n \times \Delta_m$ is called interior if both $\mathbf{x}$ and $\mathbf{y}$ are interior probability vectors. Let $a_{max}$ the entry of $A$ with maximum absolute value; it follows that the bilinear objective function $f(\mathbf{x}, \mathbf{y}) = \mathbf{x}^T A \mathbf{y} $ is $|a_{max}|$-Lipschitz continuous in $\Delta_n \times \Delta_m$. Note that rescaling the entries of the matrix by a positive constant does not alter the game; for \Cref{theorem:fast_convergence} we make the normative assumption that every entry is multiplied by $1/|a_{max}|$, so that the objective function is $1$-Lipschitz. A set is \emph{closed} if and only if it contains all of its limit points. Moreover, a subset of a Euclidean space is \emph{compact} if and only if it is closed and bounded (Heine-Borel theorem).

\paragraph{Hilbert Spaces} Although this paper deals with optimization in finite-dimensional Euclidean spaces, we wish to present the Alternating Projections method in its general form in arbitrary Hilbert spaces (see \Cref{theorem:alternating_projections}). In this context, we review some basic concepts that the reader may need. A Hilbert space $\mathcal{H}$ is an inner product and complete -- every Cauchy sequence converges -- metric space. The norm is defined as $||u|| = \sqrt{\langle u, u \rangle}$, where $\langle \cdot, \cdot \rangle$ denotes the inner product in $\mathcal{H}$. Moreover, the distance between $u, v \in \mathcal{H}$ is defined in terms of norm by $d(u, v) = ||u - v||$, while $d(u, S) = \inf_{v \in S} d(u, v)$ -- for some non-empty set $S \subseteq \mathcal{H}$. Also recall from Hilbert's projection theorem that if $C$ is a non-empty, closed and convex set in a Hilbert space $\mathcal{H}$, the projection of any point $u \in \mathcal{H}$ to $C$ is uniquely defined as $\mathcal{P}_C(u) = \argmin_{v \in C} || u - v||$.

\begin{definition}
A sequence $\left\{ u_n \right\}_0^{\infty}$ on a Hilbert space $\mathcal{H}$ is said to converge linearly \footnote{To avoid confusion, we should mention that linear convergence in iterative methods is usually referred to as exponential convergence in discretization methods.} to $u^* \in \mathcal{H}$ with rate $\lambda \in (0, 1)$ if
\begin{equation}
    \lim_{n \to \infty} \frac{|| u_{n+1} - u^* ||}{|| u_n - u^* ||} = \lambda.
\end{equation}
\end{definition}

\paragraph{Notation} We denote with $A \in \mathbb{R}^{n \times m}$ the matrix of the game and with $\mathbf{x} \in \mathcal{X}, \mathbf{y} \in \mathcal{Y}$ the players' strategies. With $\Delta_k$ we denote the $k$-dimensional probability simplex. We use $t$ and $k$ as discrete time indexes, while the variables $i, j, k, t$ will be implied to be integers, without explicitly stated as such. We use $\mathbf{I}_{k}$ and $\mathbf{0}_{k \times \ell}$ to refer to the identity matrix of size $k \times k$ and the zero matrix of size $k \times \ell$ respectively; when $k = \ell$ we simply write $\mathbf{0}_{k}$ instead of $\mathbf{0}_{k \times k}$. We also use $\mathbf{1}_{k \times \ell}$ to denote the $k \times \ell$ matrix with $1$ in every entry, while we sometimes omit the dimensions when they are clear from the context. The vector norm will refer to the Euclidean norm $||\cdot||_2$, while we denote with $|| S ||$ the spectral norm of $S$, that is, the square root of the maximum eigenvalue of $S^T S$. Finally, $\mathcal{N}(S)$ represents the null space of matrix $S$.

\section{Characterizing OGDA}
\label{section:OGDA}

Throughout this section, we analyze Optimistic Gradient Descent/Ascent for unconstrained and bilinear games, i.e. $f(\mathbf{x}, \mathbf{y}) = \mathbf{x}^T A \mathbf{y}$, $\mathcal{X} = \mathbb{R}^n$ and $\mathcal{Y} = \mathbb{R}^m$.

\subsection{Solving the Dynamics}
\label{subsection:dynamics-solution}

In this subsection, we derive a closed-form and concise solution to the OGDA dynamics. First, consider some arbitrary initial conditions for the players' strategies $\mathbf{x}_{-1}, \mathbf{x}_0 \in \mathbb{R}^n$ and $\mathbf{y}_{-1}, \mathbf{y}_0 \in \mathbb{R}^m$. When the objective function is bilinear, the update rules of OGDA \eqref{equation:OGDA} can be formulated for $t \geq 1$ as
\begin{equation}
    \label{equation:OGDA-bilinear}
    \begin{split}
    \mathbf{x}_{t} &= \mathbf{x}_{t-1} - 2 \eta A \mathbf{y}_{t-1} + \eta A \mathbf{y}_{t-2}, \\
    \mathbf{y}_{t} &= \mathbf{y}_{t-1} + 2 \eta A^T \mathbf{x}_{t-1} - \eta A^T \mathbf{x}_{t-2}.
    \end{split}
\end{equation}
These equations can be expressed more concisely in matrix form:

\begin{equation}
    \label{equation:OGDA-matrix_form}
    \begin{pmatrix}
    \mathbf{x}_{t} \\
    \mathbf{y}_{t}
    \end{pmatrix}
    =
    \begin{pmatrix}
    \mathbf{I}_n & -2 \eta A \\
    2 \eta A^T & \mathbf{I}_m
    \end{pmatrix}
    \begin{pmatrix}
    \mathbf{x}_{t-1} \\
    \mathbf{y}_{t-1}
    \end{pmatrix}
    +
    \begin{pmatrix}
    \mathbf{0}_n & \eta A \\
    -\eta A^T & \mathbf{0}_m
    \end{pmatrix}
    \begin{pmatrix}
    \mathbf{x}_{t-2} \\
    \mathbf{y}_{t-2}
    \end{pmatrix}.
\end{equation}
\bigbreak
\noindent
In correspondence to the last expression, let us introduce the following matrices:

\begin{equation}
    \mathbf{z}_t =
    \begin{pmatrix}
    \mathbf{x}_t \\
    \mathbf{y}_t
    \end{pmatrix},
    B =
    \begin{pmatrix}
    \mathbf{I}_n & -2 \eta A \\
    2\eta A^T & \mathbf{I}_m
    \end{pmatrix},
    C =
    \begin{pmatrix}
    \mathbf{0}_{n} & \eta A \\
    -\eta A^T & \mathbf{0}_m
    \end{pmatrix}.
\end{equation}
With this notation, \Cref{equation:OGDA-matrix_form} can be re-written as

\begin{equation}
    \label{equation:recursion-second_order}
    \mathbf{z}_t = B \mathbf{z}_{t-1} + C \mathbf{z}_{t-2}.
\end{equation}

\Cref{equation:recursion-second_order} induces a second-order and linear recursion in matrix form; hence, its solution can be derived through a standard technique. In particular, it is easy to verify that \eqref{equation:recursion-second_order} can be equivalently reformulated as

\begin{equation}
    \label{equation:recursion-first_order}
    \begin{pmatrix}
    \mathbf{z}_{t} \\
    \mathbf{z}_{t-1}
    \end{pmatrix}
    =
    \begin{pmatrix}
    B & C \\
    \mathbf{I}_{n + m} & \mathbf{0}_{n+m}
    \end{pmatrix}
    \begin{pmatrix}
    \mathbf{z}_{t-1} \\
    \mathbf{z}_{t-2}
    \end{pmatrix}
    = \Delta^t
    \begin{pmatrix}
    \mathbf{z}_{0} \\
    \mathbf{z}_{-1}
    \end{pmatrix},
\end{equation}
where

\begin{equation}
    \Delta
    =
    \begin{pmatrix}
    B & C \\
    \mathbf{I}_{n + m} & \mathbf{0}_{n+m}
    \end{pmatrix}.
\end{equation}
\bigbreak

As a result, we have reduced solving the OGDA dynamics to determining matrix $\Delta^t$. To this end, first note that the block sub-matrices of $\Delta$ have the same dimensions $(n+m) \times (n+m)$. In addition, every couple of these sub-matrices satisfies the commutative property; indeed, we can verify that $B C = C B$ and hence, every possible multiplicative combination will also commute. Thus, we can invoke for the sub-matrices polynomial identities that apply for scalar numbers. In this direction, we establish the following claim:

\begin{lemma}
\label{lemma:matrix_power}
Consider a matrix $R \in \mathbb{R}^{2 \times 2}$ defined as

\begin{equation}
    \label{equation:matrix_R}
   R
    =
    \begin{pmatrix}
    b & c \\
    1 & 0
    \end{pmatrix}.
\end{equation}

\bigbreak
\noindent
Then, for any $k \geq 2$,

\begin{equation}
    \label{equation:matrix_power}
    R^k
    =
    \begin{bdmatrix}
    \sum_{i=0}^{\lfloor \frac{k}{2} \rfloor} \binom{k - i}{i} b^{k - 2i} c^i & \sum_{i=0}^{\lfloor \frac{k-1}{2} \rfloor} \binom{k - i - 1}{i} b^{k - 2i - 1} c^{i+1} \\
    \sum_{i=0}^{\lfloor \frac{k-1}{2} \rfloor} \binom{k - i - 1}{i} b^{k - 2i - 1} c^i & \sum_{i=0}^{\lfloor\frac{k-2}{2} \rfloor} \binom{k - i - 2}{i} b^{k - 2i - 2} c^{i+1}
    \end{bdmatrix}.
\end{equation}

\end{lemma}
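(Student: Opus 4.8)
The plan is to prove \eqref{equation:matrix_power} by induction on $k$, after first collapsing the four entries of $R^k$ onto a single scalar sequence. Define $p_k = \sum_{i=0}^{\lfloor k/2 \rfloor} \binom{k-i}{i} b^{k-2i} c^i$ for $k \geq 0$, with the convention $p_{-1} = 0$, so that $p_0 = 1$, $p_1 = b$, $p_2 = b^2 + c$. Reindexing the summation variable (writing $\binom{k-i-1}{i} b^{k-2i-1} c^{i+1} = c\,\binom{(k-1)-i}{i} b^{(k-1)-2i} c^i$, and similarly for the bottom-right sum) shows that the bottom-left sum in \eqref{equation:matrix_power} equals $p_{k-1}$, the top-right sum equals $c\,p_{k-1}$, and the bottom-right sum equals $c\,p_{k-2}$. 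Hence the lemma is equivalent to the statement
\[
R^k = \begin{pmatrix} p_k & c\,p_{k-1} \\ p_{k-1} & c\,p_{k-2} \end{pmatrix}, \qquad k \geq 2.
\]

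First I would prove the recurrence $p_k = b\,p_{k-1} + c\,p_{k-2}$ for $k \geq 2$. Expanding the right-hand side as a sum of monomials $b^{k-2i} c^i$ and collecting like terms, the coefficient of $b^{k-2i} c^i$ becomes $\binom{k-1-i}{i} + \binom{k-1-i}{i-1}$, which equals $\binom{k-i}{i}$ by Pascal's rule --- exactly the coefficient in $p_k$. This is the only genuinely combinatorial step, and the care it requires is all at the boundary: at $i=0$ and at the top index, where one of the two binomial coefficients vanishes or is out of range, and in checking that the upper limits $\lfloor (k-1)/2\rfloor$ and $\lfloor (k-2)/2 \rfloor$ of the two sums assemble correctly into $\lfloor k/2 \rfloor$ for both parities of $k$.

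With the recurrence in hand the induction is immediate. The base case $k=2$ is the direct computation $R^2 = \begin{pmatrix} b^2+c & bc \\ b & c \end{pmatrix}$, which matches the target matrix since $p_2 = b^2+c$, $p_1 = b$, $p_0 = 1$. For the step, assuming the claimed form for $R^k$, I would multiply on the left by $R$:
\[
R^{k+1} = \begin{pmatrix} b & c \\ 1 & 0 \end{pmatrix}\begin{pmatrix} p_k & c\,p_{k-1} \\ p_{k-1} & c\,p_{k-2} \end{pmatrix} = \begin{pmatrix} b\,p_k + c\,p_{k-1} & c\,(b\,p_{k-1} + c\,p_{k-2}) \\ p_k & c\,p_{k-1} \end{pmatrix},
\]
and the recurrence turns the top row into $(p_{k+1},\, c\,p_k)$, while the bottom row is already in the desired form; this is precisely the target matrix for $R^{k+1}$. (Using $R^{k+1} = R^k R$ instead gives the same conclusion and makes the column structure equally transparent.)

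I expect the only real friction to be the index bookkeeping in the combinatorial step --- following how the summation ranges shift with the parity of $k$ and confirming the boundary terms --- with everything else routine. One could avoid introducing $p_k$ and run the Pascal-rule argument directly on the four-entry matrix recursion $R^{k+1} = R\,R^k$, but then the boundary analysis must be repeated for each entry; the single-sequence reformulation is what keeps the argument short.
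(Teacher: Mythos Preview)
Your proof is correct. Both your argument and the paper's hinge on the same scalar recurrence $p_k = b\,p_{k-1} + c\,p_{k-2}$ and the observation that the four entries of $R^k$ are $p_k,\ c\,p_{k-1},\ p_{k-1},\ c\,p_{k-2}$, but the two proofs run in opposite directions. The paper invokes Cayley--Hamilton to get $R^2 = bR + cI$, writes $R^k = p_k R + q_k I$ with unknown polynomials, extracts the recurrence from the coupled system $p_{k+1} = b p_k + q_k,\ q_{k+1} = c p_k$, and then \emph{derives} the closed form by a lattice-path counting argument (interpreting each monomial as a path of unit and double steps). You instead \emph{posit} the closed form, verify the recurrence directly via Pascal's rule, and close the induction by a single matrix multiplication. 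Your route is slightly more elementary --- it avoids Cayley--Hamilton and the combinatorial interpretation --- at the cost of not explaining where the binomial expression comes from; the paper's derivation is more constructive but longer. Either way the substance is the same recurrence, and your boundary-case remarks are exactly where the work lies in both versions.
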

\bigbreak
We refer to \Cref{subsection:proof_1} for a proof of this lemma; essentially it follows from the Cayley–Hamilton theorem and simple combinatorial arguments. In addition, we can also express the power of matrix $R$  -- as defined in \Cref{equation:matrix_R} -- using the spectral decomposition -- or the more general Singular Value Decomposition (SVD) method, which yields the following identity:

\begin{proposition}

    For any $b,c \in \mathbb{R}$ such that $b^2 + 4c > 0$ and $k \geq 0$,
    \label{proposition:binomial_variant}
    \begin{equation}
        \sum_{i=0}^{\lfloor \frac{k}{2} \rfloor} \binom{k-i}{i} b^{k-2i}c^i = \frac{1}{\sqrt{b^2 + 4c}} \left\{ \left( \frac{b + \sqrt{b^2 + 4c}}{2} \right)^{k+1} - \left( \frac{b - \sqrt{b^2 + 4c}}{2} \right)^{k+1} \right\}.
    \end{equation}

\end{proposition}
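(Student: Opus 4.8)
The plan is to recognize the left-hand side of \Cref{proposition:binomial_variant} as exactly the $(1,1)$-entry of $R^k$ given by \Cref{lemma:matrix_power}, and then to compute that same entry via the eigendecomposition of $R$. Concretely, I would first observe that the characteristic polynomial of $R$ is $\lambda^2 - b\lambda - c$, whose discriminant is $b^2 + 4c$; under the hypothesis $b^2 + 4c > 0$ the two roots $\lambda_{\pm} = \tfrac{1}{2}\left(b \pm \sqrt{b^2+4c}\right)$ are real and distinct, so $R$ is diagonalizable: $R = V \operatorname{diag}(\lambda_+, \lambda_-) V^{-1}$. Hence $R^k = V \operatorname{diag}(\lambda_+^k, \lambda_-^k) V^{-1}$, and in particular the $(1,1)$-entry of $R^k$ is a fixed linear combination $\alpha \lambda_+^k + \beta \lambda_-^k$ with coefficients $\alpha, \beta$ independent of $k$.

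Next I would pin down $\alpha$ and $\beta$. Rather than explicitly diagonalizing, it is cleanest to note that the scalar sequence $s_k := (R^k)_{11}$ satisfies the recursion $s_k = b\, s_{k-1} + c\, s_{k-2}$ (since $R$ satisfies $R^2 = bR + cI$ by Cayley–Hamilton, so every entry does), and then determine the initial values: $R^0 = I$ gives $s_0 = 1$, and $R^1 = R$ gives $s_1 = b$. Matching $\alpha + \beta = 1$ and $\alpha\lambda_+ + \beta\lambda_- = b = \lambda_+ + \lambda_-$ yields (after a one-line computation using $\lambda_+ - \lambda_- = \sqrt{b^2+4c}$) that $\alpha = \lambda_+/(\lambda_+ - \lambda_-)$ and $\beta = -\lambda_-/(\lambda_+ - \lambda_-)$. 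Therefore
\begin{equation}
    s_k = \frac{\lambda_+^{k+1} - \lambda_-^{k+1}}{\lambda_+ - \lambda_-} = \frac{1}{\sqrt{b^2+4c}}\left\{ \left(\frac{b+\sqrt{b^2+4c}}{2}\right)^{k+1} - \left(\frac{b-\sqrt{b^2+4c}}{2}\right)^{k+1} \right\}.
\end{equation}
Finally, by \Cref{lemma:matrix_power} the $(1,1)$-entry of $R^k$ is $\sum_{i=0}^{\lfloor k/2 \rfloor} \binom{k-i}{i} b^{k-2i} c^i$ for $k \geq 2$, which establishes the identity for $k \geq 2$; for $k = 0$ the sum is $\binom{0}{0} = 1 = s_0$ and for $k = 1$ it is $\binom{1}{0} b = b = s_1$, so the formula holds for all $k \geq 0$ as claimed.

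There is no serious obstacle here; the argument is essentially the derivation of a Binet-type formula for a generalized Fibonacci/Lucas sequence. The only points requiring mild care are: justifying that $R$ is genuinely diagonalizable (which needs the strict inequality $b^2+4c>0$, ruling out a repeated eigenvalue and a Jordan block — exactly the stated hypothesis), keeping track of the exponent $k+1$ rather than $k$ that arises from the ratio form, and separately checking the small cases $k=0,1$ since \Cref{lemma:matrix_power} is stated only for $k \geq 2$. Alternatively, one could avoid \Cref{lemma:matrix_power} entirely and prove the combinatorial identity directly by induction on $k$ using $\binom{k-i}{i} = \binom{k-i-1}{i} + \binom{k-i-1}{i-1}$ together with the recursion $s_k = b s_{k-1} + c s_{k-2}$, but routing through the already-established lemma is shorter.
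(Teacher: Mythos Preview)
Your proposal is correct and follows essentially the same approach as the paper: both diagonalize $R$ via its two distinct real eigenvalues (using $b^2+4c>0$) and then identify the $(1,1)$-entry of $R^k$ with the sum from \Cref{lemma:matrix_power}. The only cosmetic difference is that the paper computes the eigenvector matrix $U$ and its inverse explicitly and multiplies out, whereas you extract the coefficients $\alpha,\beta$ more cleanly from the initial values $s_0=1$, $s_1=b$; your additional care in checking $k=0,1$ separately (since \Cref{lemma:matrix_power} is stated only for $k\geq 2$) is a nice touch the paper omits.
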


\bigbreak
This identity can be seen as a non-trivial variant of the celebrated binomial theorem. For a proof and connections to the Fibonacci series we refer to \Cref{subsection:proof_2}. Next, we employ the previous results to derive analogous expressions for the matrix case.

\begin{corollary}
\label{corollary:Delta_power}
For any $k \geq 2$,

\begin{equation}
    \label{equation:Delta_power}
    \Delta^k
    =
    \begin{bdmatrix}
    \sum_{i=0}^{\lfloor \frac{k}{2} \rfloor} \binom{k - i}{i} B^{k - 2i} C^i & \sum_{i=0}^{\lfloor \frac{k-1}{2} \rfloor} \binom{k - i - 1}{i} B^{k - 2i - 1} C^{i+1} \\
    \sum_{i=0}^{\lfloor \frac{k-1}{2} \rfloor} \binom{k - i - 1}{i} B^{k - 2i - 1} C^i & \sum_{i=0}^{\lfloor \frac{k-2}{2} \rfloor} \binom{k - i - 2}{i} B^{k - 2i - 2} C^{i+1}
    \end{bdmatrix}.
\end{equation}
\end{corollary}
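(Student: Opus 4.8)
The plan is to deduce this purely formally from \Cref{lemma:matrix_power}, exploiting the fact that the four blocks of $\Delta$ generate a commutative ring. Since $\mathbf{I}_{n+m}$ and $\mathbf{0}_{n+m}$ commute with everything and a one-line computation gives $BC = CB$, the subring $\mathcal{R} \subseteq \mathbb{R}^{(n+m)\times(n+m)}$ generated by $B$ and $C$ is commutative. Regarded as a $2 \times 2$ matrix with entries in $\mathcal{R}$, the matrix $\Delta$ is exactly $R$ of \eqref{equation:matrix_R} under the substitution $b \mapsto B$, $c \mapsto C$, $1 \mapsto \mathbf{I}_{n+m}$, $0 \mapsto \mathbf{0}_{n+m}$; moreover, block multiplication of such matrices agrees with ordinary matrix multiplication (all blocks are square of the same size $n+m$), so $\Delta^k$ has blocks given by the $\mathcal{R}$-valued expressions obtained from $R^k$. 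I would then observe that the proof of \Cref{lemma:matrix_power} uses only the Cayley--Hamilton relation $R^2 = bR + cI$, which is valid over any commutative ring, together with integer identities among binomial coefficients (Pascal's rule), which hold in any ring; hence it transfers verbatim to $\mathcal{R}$, and substituting into \eqref{equation:matrix_power} produces exactly \eqref{equation:Delta_power}. Note that one invokes \Cref{lemma:matrix_power} here, and not \Cref{proposition:binomial_variant}, whose hypothesis $b^2 + 4c > 0$ has no meaning for matrices.

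If a self-contained argument is preferred, I would instead run an induction on $k$. A direct multiplication shows $\Delta^2 = B\Delta + C\,\mathbf{I}_{2(n+m)}$ — where commutativity of $B$ and $C$ with all blocks is precisely what licenses treating $B$ and $C$ as (central) scalars acting on the block matrix — and iterating this yields $\Delta^k = B\Delta^{k-1} + C\Delta^{k-2}$ for all $k \geq 2$. Letting $M_k$ denote the $2\times 2$ block matrix on the right-hand side of \eqref{equation:Delta_power}, one checks directly that $M_2 = \Delta^2$ and $M_3 = \Delta^3$, and then verifies entrywise that $M_k = B M_{k-1} + C M_{k-2}$: in each of the four entries the coefficient of a monomial $B^{p} C^{q}$ appearing on the right is a sum of two binomial coefficients that collapses, via $\binom{k-i}{i} = \binom{k-i-1}{i} + \binom{k-i-1}{i-1}$ and its shifted variants, to the coefficient occurring in $M_k$. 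Induction then gives $M_k = \Delta^k$ for every $k \geq 2$.

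The only genuinely delicate points are bookkeeping ones. In the first approach, one must justify once and for all that powers of a block matrix may be computed inside the commutative ring $\mathcal{R}$, so that the substitution $b \mapsto B$, $c \mapsto C$ is legitimate; this is routine but worth stating. In the inductive version, the care lies in the upper summation limits $\lfloor k/2 \rfloor$, $\lfloor (k-1)/2 \rfloor$, $\lfloor (k-2)/2 \rfloor$ at the boundary indices, so that the Pascal cancellations line up correctly — the terms that would fall outside a summation range carry a vanishing binomial coefficient, so no actual discrepancy arises. Everything else is a transcription of the proof of \Cref{lemma:matrix_power}, and I do not expect any conceptual obstacle.
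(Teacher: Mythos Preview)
Your proposal is correct and follows essentially the same approach as the paper, which simply states that the claim ``follows directly from \Cref{lemma:matrix_power} and the fact that the square matrices $B$ and $C$ commute.'' You have supplied considerably more justification than the paper does---making explicit the commutative-ring argument and offering a self-contained inductive alternative---but the underlying idea (lift the scalar identity of \Cref{lemma:matrix_power} to the block setting via $BC=CB$) is identical.
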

\bigbreak
This claim follows directly from \Cref{lemma:matrix_power} and the fact that the square matrices $B$ and $C$ commute. As a result, we can apply this corollary to \Cref{equation:recursion-first_order} in order to derive the following expression for the dynamics of OGDA:

\begin{equation}
    \label{equation:OGDA-solution}
    \begin{pmatrix}
    \mathbf{x}_t \\
    \mathbf{y}_t
    \end{pmatrix}
    =
    \left\{
    \sum_{i=0}^{\lfloor \frac{t}{2} \rfloor} \binom{t - i}{i} B^{t - 2i} C^i \right\}
    \begin{pmatrix}
    \mathbf{x}_0 \\
    \mathbf{y}_0
    \end{pmatrix}
    + \left\{
    \sum_{i=0}^{\lfloor \frac{t-1}{2} \rfloor} \binom{t - i - 1}{i} B^{t - 2i - 1}C^{i+1} \right\}
    \begin{pmatrix}
    \mathbf{x}_{-1} \\
    \mathbf{y}_{-1}
    \end{pmatrix}.
\end{equation}
\bigbreak
\noindent
Let us assume that

\begin{equation}
    \label{equation:convolution_Q}
    Q_t = \sum_{i=0}^{\lfloor \frac{t}{2} \rfloor} \binom{t-i}{i} B^{t-2i} C^i.
\end{equation}
Then, \Cref{equation:OGDA-solution} can be written as

\begin{equation}
    \label{equation:OGDA-solution_Q}
    \begin{pmatrix}
    \mathbf{x}_t \\
    \mathbf{y}_t
    \end{pmatrix}
    = Q_t
    \begin{pmatrix}
    \mathbf{x}_0 \\
    \mathbf{y}_0
    \end{pmatrix}
    + Q_{t-1} C
    \begin{pmatrix}
    \mathbf{x}_{-1} \\
    \mathbf{y}_{-1}
    \end{pmatrix}.
\end{equation}

Therefore, the final step is to provide a more compact expression for $Q_t$. Note that the convergence of the dynamics reduces to the convergence of $Q_t$. To this end, we will establish a generalized version of \Cref{proposition:binomial_variant} that holds for matrices. First, let

\begin{equation}
    T = B^2 + 4C =
    \begin{pmatrix}
    \mathbf{I}_n - 4\eta^2 A A^T & \mathbf{0}_{n \times m} \\
    \mathbf{0}_{m \times n} & \mathbf{I}_m - 4 \eta^2 A^T A
    \end{pmatrix}.
\end{equation}

\bigbreak
Let us assume that $\gamma = ||A|| = ||A^T||$; recall that $||A A^T|| = ||A^T A || = \gamma^2$. Naturally, we consider the non-trivial case where $\gamma \neq 0$.

\begin{lemma}
\label{lemma:positive_definite}
For any learning rate $\eta < 1/(2\gamma)$, matrix $T$ is positive definite.
\end{lemma}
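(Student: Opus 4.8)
The plan is to exploit the block-diagonal and symmetric structure of $T$ and reduce the claim to a one-line estimate on each diagonal block. First I would observe that $T$ is symmetric: the off-diagonal blocks are zero, and the diagonal blocks $\mathbf{I}_n - 4\eta^2 AA^T$ and $\mathbf{I}_m - 4\eta^2 A^T A$ are symmetric because $AA^T$ and $A^T A$ are. A symmetric block-diagonal matrix is positive definite if and only if each of its diagonal blocks is positive definite, so it suffices to treat the two blocks separately.

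For the first block, I would take an arbitrary nonzero $u \in \mathbb{R}^n$ and compute
\begin{equation}
    u^T (\mathbf{I}_n - 4\eta^2 AA^T) u = \|u\|^2 - 4\eta^2 \|A^T u\|^2 \geq \|u\|^2 - 4\eta^2 \|A^T\|^2 \|u\|^2 = (1 - 4\eta^2 \gamma^2)\|u\|^2,
\end{equation}
using $\|A^T u\| \leq \|A^T\|\,\|u\|$ and the normalization $\|A^T\| = \gamma$. Since $\eta < 1/(2\gamma)$ gives $4\eta^2\gamma^2 < 1$, the right-hand side is strictly positive, so the first block is positive definite. The argument for $\mathbf{I}_m - 4\eta^2 A^T A$ is identical after replacing $u$ by $v \in \mathbb{R}^m$ and $\|A^T u\|^2$ by $\|A v\|^2 \le \gamma^2 \|v\|^2$. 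Alternatively, one may phrase this spectrally: the eigenvalues of $AA^T$ (resp.\ $A^T A$) are the squared singular values of $A$, all lying in $[0,\gamma^2]$, so the eigenvalues of the corresponding block are of the form $1 - 4\eta^2\sigma_i^2 \geq 1 - 4\eta^2\gamma^2 > 0$.

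There is no serious obstacle here; the only point requiring a moment's care is the identification $\|AA^T\| = \|A^T A\| = \gamma^2$ and $\|A\| = \|A^T\| = \gamma$, which is the content of the standard relation between the spectral norm and the singular values and is already recorded in the text preceding the lemma. Combining the two blocks yields that $T$ is positive definite whenever $\eta < 1/(2\gamma)$, as claimed.
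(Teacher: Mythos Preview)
Your proof is correct and essentially identical to the paper's own argument: both reduce to the two diagonal blocks, compute the quadratic form $\|u\|^2 - 4\eta^2\|A^T u\|^2$, and bound it using $\|A^T\| = \gamma$ together with $\eta < 1/(2\gamma)$. The only difference is cosmetic---you spell out the block-diagonal reduction and offer an additional spectral rephrasing, whereas the paper states the reduction in one line and treats only the first block explicitly.
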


We provide a proof for this claim in \Cref{subsection:proof_3}. As a result, for any sufficiently small learning rate, the (positive definite) square root of matrix $T$ -- the principal square root $T^{1/2}$ -- is well defined, as well as its inverse matrix $T^{-1/2}$.

\begin{lemma}
\label{lemma:commutative}
Consider a learning rate $\eta < 1/(2\gamma)$; then, matrices $B$ and $T^{1/2}$ commute.
\end{lemma}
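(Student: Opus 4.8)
The plan is to show that $B$ and $T^{1/2}$ commute by exploiting the fact that $T = B^2 + 4C$ is a polynomial in $B$ and $C$, together with the already-established commutativity $BC = CB$. First I would observe that since $B$ and $C$ commute, $B$ commutes with $T = B^2 + 4C$; this is immediate because $B(B^2+4C) = B^3 + 4BC = B^3 + 4CB = (B^2+4C)B$. So the real content is the step from ``$B$ commutes with $T$'' to ``$B$ commutes with $T^{1/2}$''.

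The key tool here is the standard fact that the principal square root of a positive definite matrix is a \emph{polynomial} in that matrix. Concretely, by \Cref{lemma:positive_definite}, $T$ is positive definite, so it is diagonalizable with strictly positive eigenvalues lying in some interval $[\mu_{\min}, \mu_{\max}]$ with $\mu_{\min} > 0$; by polynomial interpolation (e.g. Lagrange interpolation at the distinct eigenvalues of $T$) there exists a polynomial $p$ with real coefficients such that $p(\lambda) = \sqrt{\lambda}$ for every eigenvalue $\lambda$ of $T$, and consequently $p(T) = T^{1/2}$ since both sides are symmetric matrices with the same eigenvalues on the same eigenspaces. I would state this as a short lemma or simply cite it as a classical fact about functions of matrices. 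Then, since $B$ commutes with $T$, it commutes with every power $T^j$, hence with any polynomial in $T$, and in particular with $p(T) = T^{1/2}$. That completes the argument.

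The main obstacle — really the only subtle point — is justifying that $T^{1/2}$ is a polynomial in $T$, and more precisely making sure the chosen polynomial $p$ depends only on $T$ (through its spectrum) and not on any auxiliary data, so that the implication ``commutes with $T$ $\Rightarrow$ commutes with $p(T)$'' is clean. An alternative, perhaps cleaner, route that avoids the polynomial-interpolation bookkeeping is to argue via simultaneous diagonalization: since $B$ commutes with $T$ and both are symmetric (note $T$ is symmetric, and although $B$ is not symmetric in general, $C = \frac12(B^2 - T)$... actually $C$ is skew-symmetric, so one must be careful here), one can instead work directly with the block structure: $T = \mathrm{diag}(\mathbf{I}_n - 4\eta^2 AA^T,\ \mathbf{I}_m - 4\eta^2 A^TA)$, so $T^{1/2}$ is the block-diagonal matrix of the corresponding principal square roots, and one checks the commutation block by block using that $(\mathbf{I} - 4\eta^2 AA^T)^{1/2}$ is a polynomial in $AA^T$ (again positive definite by \Cref{lemma:positive_definite}) together with $A(A^TA)^k = (AA^T)^k A$. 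I would present the polynomial-in-$T$ argument as the main line since it is shortest, and remark that the commuting-family structure makes it routine. Either way, no heavy computation is needed; the proof is a few lines once the ``square root is a polynomial'' fact is invoked.
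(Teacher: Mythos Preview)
Your proposal is correct and follows the same high-level strategy as the paper: first observe that $B$ commutes with $T$ because $BC=CB$, then argue that $T^{1/2}$ is expressible purely in terms of $T$, so anything commuting with $T$ commutes with $T^{1/2}$. The difference is in how the second step is justified. The paper bounds $\rho(T)\le 1$ and then writes $T^{1/2}$ as the convergent binomial power series
\[
T^{1/2}=\mathbf{I}_{n+m}-\sum_{i\ge 1}\left|\binom{1/2}{i}\right|(\mathbf{I}_{n+m}-T)^i,
\]
concluding commutativity term by term. Your route via Lagrange interpolation on the (finitely many) eigenvalues of $T$ yields a genuine polynomial $p$ with $p(T)=T^{1/2}$, which sidesteps the spectral-radius estimate and the convergence argument entirely; it only needs the positive-definiteness already supplied by \Cref{lemma:positive_definite}. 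So your argument is slightly leaner. Your aside about simultaneous diagonalization is unnecessary (and, as you noticed, $B$ is not symmetric), so I would drop it and keep only the polynomial-in-$T$ line.
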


\begin{corollary}
For any $\eta < 1/(2\gamma)$,
\begin{equation}
    \label{equation:solution_Q}
    Q_t = T^{-1/2} \left\{ \left( \frac{B + T^{1/2}}{2}\right)^{t+1} - \left( \frac{B - T^{1/2}}{2} \right)^{t+1} \right\}.
\end{equation}
\end{corollary}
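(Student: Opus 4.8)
The goal is to lift Proposition \ref{proposition:binomial_variant} — the scalar identity expressing $\sum_i \binom{k-i}{i} b^{k-2i} c^i$ in terms of the roots $(b \pm \sqrt{b^2+4c})/2$ — to the matrix level, where $b$ is replaced by $B$, $c$ by $C$, and $\sqrt{b^2+4c}$ by $T^{1/2}$. The plan is to exploit the fact, already noted in the excerpt, that $B$ and $C$ commute and that $C = \tfrac14(T - B^2)$, so the whole computation lives inside the commutative polynomial algebra $\mathbb{R}[B, T^{1/2}]$ (using Lemma \ref{lemma:commutative} to know $B$ and $T^{1/2}$ commute, and noting $T$ is a polynomial in $B^2$ hence commutes with everything in sight). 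Within a commutative ring the binomial-type identity of Proposition \ref{proposition:binomial_variant} is a formal polynomial identity in the indeterminates, so it transfers verbatim once we check the one quantity that genuinely used the scalar hypothesis — namely the invertibility of $\sqrt{b^2+4c}$, which here is the invertibility of $T^{1/2}$, guaranteed by Lemma \ref{lemma:positive_definite} for $\eta < 1/(2\gamma)$.

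More concretely, I would proceed as follows. First, recall $Q_t = \sum_{i=0}^{\lfloor t/2\rfloor} \binom{t-i}{i} B^{t-2i} C^i$ from \eqref{equation:convolution_Q}. Second, observe that Proposition \ref{proposition:binomial_variant} is an identity of polynomials: for every fixed $k \geq 0$ there is a polynomial identity in the commuting indeterminates $b, s$ (with $s^2 = b^2 + 4c$, equivalently $c = (s^2 - b^2)/4$),
\begin{equation}
    s \cdot \sum_{i=0}^{\lfloor k/2 \rfloor} \binom{k-i}{i} b^{k-2i} \left( \frac{s^2 - b^2}{4} \right)^i = \left( \frac{b+s}{2} \right)^{k+1} - \left( \frac{b-s}{2} \right)^{k+1},
\end{equation}
valid in $\mathbb{Z}[b,s]$ (the right-hand side is visibly a polynomial in $b$ and $s^2$ after expansion, and the content of Proposition \ref{proposition:binomial_variant} is precisely this algebraic identity, its analytic statement being the specialization $s = \sqrt{b^2+4c} > 0$). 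Third, substitute $b \mapsto B$ and $s \mapsto T^{1/2}$: the substitution is a ring homomorphism from $\mathbb{R}[b,s]$ into the commutative subalgebra generated by $B$ and $T^{1/2}$, under which $(s^2 - b^2)/4 \mapsto (T - B^2)/4 = C$ by the definition of $T$. Hence the identity becomes, with $k = t$,
\begin{equation}
    T^{1/2} \, Q_t = \left( \frac{B + T^{1/2}}{2} \right)^{t+1} - \left( \frac{B - T^{1/2}}{2} \right)^{t+1}.
\end{equation}
Fourth, multiply on the left by $T^{-1/2}$, which exists and is well-defined for $\eta < 1/(2\gamma)$ by Lemma \ref{lemma:positive_definite}, to obtain \eqref{equation:solution_Q}.

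The one point deserving care — and the likeliest place to stumble — is making precise the claim that Proposition \ref{proposition:binomial_variant} \emph{is} a formal polynomial identity rather than merely an analytic one, so that the substitution $s \mapsto T^{1/2}$ is legitimate even though $T^{1/2}$ is not a scalar. The clean way around this is to note that two polynomials in $\mathbb{R}[b,s]$ that agree on the Zariski-dense set $\{(b,s) : s > 0,\ s^2 - b^2 > 0\} \subseteq \mathbb{R}^2$ are equal as polynomials, and Proposition \ref{proposition:binomial_variant} asserts exactly such agreement (with $c = (s^2-b^2)/4 > 0$, matching its hypothesis $b^2 + 4c = s^2 > 0$). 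Once the identity is known in $\mathbb{R}[b,s]$, the matrix version is immediate from commutativity of $B$ and $T^{1/2}$ (Lemma \ref{lemma:commutative}) together with the identity $C = (T-B^2)/4$, and no further convergence or spectral input is needed. Alternatively, one can avoid the polynomial-identity language entirely by diagonalizing: since $T$ is positive definite and a polynomial in $A A^T \oplus A^T A$, simultaneously orthogonally diagonalize it with $B$'s relevant blocks and apply the scalar Proposition \ref{proposition:binomial_variant} eigenvalue-by-eigenvalue — this is more hands-on but sidesteps any algebraic subtlety.
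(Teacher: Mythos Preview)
Your proposal is correct and matches the paper's approach: the paper simply states that the corollary ``follows directly from the previous lemmas and \Cref{proposition:binomial_variant},'' i.e., exactly your lift of the scalar identity via the commutativity of $B$ and $T^{1/2}$ (\Cref{lemma:commutative}) and the invertibility of $T^{1/2}$ (\Cref{lemma:positive_definite}). Your write-up is in fact more careful than the paper's one-line justification, since you make explicit why the scalar identity of \Cref{proposition:binomial_variant} is a formal polynomial identity in $(b,s)$ and hence transfers to any commutative algebra under the substitution $b\mapsto B$, $s\mapsto T^{1/2}$, $c\mapsto (T-B^2)/4 = C$.
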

\bigbreak
This corollary follows directly from the previous lemmas and \Cref{proposition:binomial_variant}. Therefore, if we replace the derived expression of $Q_t$ in \Cref{equation:OGDA-solution_Q} we obtain a closed-form and succinct solution for the OGDA dynamics. In the following sections we employ this result to characterize the behavior of the dynamics.

\subsection{Convergence of the Dynamics}
\label{subsection:convergence}

It is clear that analyzing the convergence of OGDA reduces to investigating the asymptotic behavior of $Q_t$; in particular, we can prove the following theorem:

\begin{theorem}
\label{theorem:convergence}
For any learning rate $\eta < 1/(2\gamma)$, OGDA converges from any initial state.
\end{theorem}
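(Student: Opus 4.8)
The plan is to leverage the closed-form solution $Q_t = T^{-1/2}\left\{ \left( \frac{B + T^{1/2}}{2}\right)^{t+1} - \left( \frac{B - T^{1/2}}{2} \right)^{t+1} \right\}$ together with \Cref{equation:OGDA-solution_Q}, and show that both $Q_t$ and $Q_{t-1}C$ converge as $t \to \infty$. Since everything is driven by the matrices $M_{\pm} = \frac{1}{2}(B \pm T^{1/2})$, the whole question reduces to understanding the powers $M_{\pm}^{t+1}$, and this is a spectral problem. First I would compute the eigenstructure of $M_{\pm}$. Because $B$ and $T^{1/2}$ commute (\Cref{lemma:commutative}), they are simultaneously (orthogonally, after symmetrization) diagonalizable in a suitable basis; working in the eigenbasis of $A A^T$ and $A^T A$, each pair of singular-value blocks of $A$ contributes eigenvalues of the form $\frac{1}{2}\bigl(1 \pm \sqrt{1 - 4\eta^2 \sigma^2}\bigr)$ for $M_+$ coming from the "$\mathbf{I}$" part, paired with complex eigenvalues $\frac{1}{2}\bigl(1 \pm i\cdot 2\eta\sigma \pm \sqrt{1-4\eta^2\sigma^2}\bigr)$-type expressions arising from the off-diagonal $\mp 2\eta A$ blocks of $B$. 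The key computation is to show every eigenvalue $\lambda$ of $M_+$ and of $M_-$ satisfies either $|\lambda| < 1$ or $\lambda = 1$, and moreover that the eigenvalue $1$ is semisimple (no nontrivial Jordan blocks), so that $M_{\pm}^{t+1}$ stays bounded and in fact converges to the spectral projection onto the $\lambda = 1$ eigenspace.

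Concretely, I would diagonalize the $2\times 2$ "companion-style" block $\begin{pmatrix} b & c \\ 1 & 0 \end{pmatrix}$ associated with each singular value $\sigma$ of $A$, where in the relevant coordinates $b = 1$ and $c = -\eta^2\sigma^2$ (the sign coming from $C$); its eigenvalues are the roots of $\mu^2 - \mu + \eta^2\sigma^2 = 0$, namely $\mu = \frac{1}{2}\bigl(1 \pm \sqrt{1 - 4\eta^2\sigma^2}\bigr)$. For $\sigma \neq 0$ and $\eta < 1/(2\gamma) \le 1/(2\sigma)$ we have $0 < 1 - 4\eta^2\sigma^2 < 1$, so both roots are real, positive, distinct, and lie strictly inside $(0,1)$; hence the corresponding component of $Q_t$ decays geometrically. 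For the kernel directions $\sigma = 0$ (i.e. vectors in $\mathcal{N}(A)$ or $\mathcal{N}(A^T)$), the block degenerates, $C$ acts as zero and $B$ acts as the identity, so those coordinates are fixed — this is precisely the eigenvalue $1$, and it is semisimple because $B$ restricted there is literally $\mathbf{I}$. Plugging $\mu$ back through the Binet-type formula of \Cref{proposition:binomial_variant} (extended to matrices via the corollary), the $\sigma\neq 0$ part of $Q_t$ is $\frac{1}{\sqrt{1-4\eta^2\sigma^2}}\bigl(\mu_+^{t+1} - \mu_-^{t+1}\bigr) \to 0$, while the $\sigma = 0$ part is constant in $t$. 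Therefore $Q_t \to Q_\infty$ and likewise $Q_{t-1}C \to Q_\infty C$; since $C$ also annihilates the kernel directions, these limits are well-defined, and \Cref{equation:OGDA-solution_Q} gives $\mathbf{z}_t \to Q_\infty \mathbf{z}_0 + Q_\infty C \mathbf{z}_{-1}$, establishing convergence from an arbitrary initial state.

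The main obstacle I anticipate is the bookkeeping needed to pass rigorously from the scalar analysis of the roots $\mu_\pm$ to a clean statement about the matrix powers $M_\pm^{t+1}$ — specifically, verifying that there are no nontrivial Jordan blocks at eigenvalue $1$ and that the only modulus-one eigenvalue is $1$ itself (one must rule out, e.g., $\mu_+ \cdot \mu_- = 1$ type resonances or eigenvalues on the unit circle other than $1$, which is where the strict inequality $\eta < 1/(2\gamma)$ is essential). A secondary technical point is handling the case $n \neq m$ and $A$ not full rank: the SVD of $A$ naturally produces matched singular-value blocks plus leftover kernel blocks on each side, and I would organize the argument by block-diagonalizing $\Delta$ (or equivalently $B$ and $C$ simultaneously) along the orthogonal decomposition $\mathbb{R}^n \oplus \mathbb{R}^m = \bigl(\text{row/col spaces}\bigr) \oplus \bigl(\mathcal{N}(A^T) \oplus \mathcal{N}(A)\bigr)$, reducing to the $2\times 2$ scalar computation on each singular-value block and to the trivial identity action on the kernel block. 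Once that reduction is in place, convergence is immediate and the rate (\Cref{corollary:convergence}) drops out as $\max_\sigma |\mu_+(\sigma)| = \frac{1}{2}(1 + \sqrt{1 - 4\eta^2\gamma_{\min}^2})$ governing the slowest-decaying mode, where $\gamma_{\min}$ is the smallest nonzero singular value.
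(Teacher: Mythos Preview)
Your high-level strategy --- simultaneously diagonalize $B$ and $T^{1/2}$ (equivalently, block-diagonalize along the SVD of $A$) and verify that every eigenvalue of $M_\pm = (B\pm T^{1/2})/2$ lies in the closed unit disk, with $1$ the only boundary eigenvalue and semisimple --- is exactly the paper's approach.

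However, the ``concrete'' computation in your second paragraph contains an error. On a singular-value-$\sigma$ block, $B$ does \emph{not} act as the scalar $1$: its restriction is the $2\times 2$ block $\begin{pmatrix} 1 & -2\eta\sigma \\ 2\eta\sigma & 1\end{pmatrix}$, with \emph{complex} eigenvalues $1\pm 2\eta\sigma j$, and the matching eigenvalues of $C$ on the same eigenvectors are $\mp\eta\sigma j$. The correct scalar companion equation is therefore $\mu^2 - (1 + 2\eta\sigma j)\mu + \eta\sigma j = 0$ (together with its complex conjugate), whose discriminant is $1 - 4\eta^2\sigma^2$ and whose roots are
\[
\mu \;=\; \frac{1 \pm \sqrt{1 - 4\eta^2\sigma^2}}{2} \;+\; \eta\sigma\, j,
\]
precisely the complex eigenvalues of $E$ and $V$ that the paper derives (and that you yourself gestured at in your first paragraph). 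Your claimed real roots $\tfrac{1}{2}\bigl(1\pm\sqrt{1-4\eta^2\sigma^2}\bigr)$ are in fact the \emph{squared moduli} of these complex eigenvalues, so by a lucky coincidence the convergence conclusion survives; but the rate you quote at the end, $\tfrac{1}{2}\bigl(1+\sqrt{1-4\eta^2\gamma_{\min}^2}\bigr)$, is the square of the true rate $\sqrt{(1+\sqrt{1-\lambda_{\min}})/2}$ given in \Cref{proposition:convergence_rate}. Setting $b = 1 \pm 2\eta\sigma j$ (not $b=1$) and $c = \mp\eta\sigma j$ repairs the argument and brings it in line with the paper.
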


\begin{sproof}
We give a high-level sketch of our techniques. For a rigorous proof we refer to \Cref{subsection:proof_5}. First, a sufficient condition for the claim to hold is that the powers of both matrices $(B + T^{1/2})/2$ and $(B - T^{1/2})/2$ converge, which is tantamount to showing that their spectrum resides within the unit circle. In this context, although the spectrum of $B$ and $T^{1/2}$ can be determined as a simple exercise, the main complication is that the spectrum of the sum of matrices cannot be -- in general -- characterized from the individual components. Nonetheless, we show that $B$ and $T^{1/2}$ are simultaneously diagonalizable and that their eigenvalues are in a particular correspondence.
\end{sproof}

\paragraph{Remark} Throughout our analysis we made the natural assumption that the learning rate $\eta$ is positive. However, our proof of convergence in \Cref{theorem:convergence} (see \Cref{subsection:proof_5}) remains valid when $|\eta| < 1/(2\gamma)$, implying a very counter-intuitive property: OGDA can actually converge with negative learning rate! Of course, this is very surprising since performing Gradient Descent/Ascent with negative learning rate leads both players to the direction of the (locally) worst strategy. Perhaps, the optimistic term negates this intuition.

\begin{corollary}
For any learning rate $\eta < 1/(2\gamma)$, OGDA exhibits linear convergence.
\end{corollary}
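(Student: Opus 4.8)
The plan is to upgrade the qualitative convergence of \Cref{theorem:convergence} into a quantitative \emph{linear} rate by extracting the spectral radius of the governing matrices. Recall from \Cref{equation:OGDA-solution_Q} and \Cref{equation:solution_Q} that the state $\mathbf{z}_t$ is an affine image of
\[
Q_t = T^{-1/2}\left\{ \left( \frac{B + T^{1/2}}{2} \right)^{t+1} - \left( \frac{B - T^{1/2}}{2} \right)^{t+1} \right\}.
\]
So it suffices to control the asymptotics of the powers of $M_{+} := (B + T^{1/2})/2$ and $M_{-} := (B - T^{1/2})/2$. The first step is to reuse the key structural fact established in the proof of \Cref{theorem:convergence}: $B$ and $T^{1/2}$ are simultaneously diagonalizable (over $\mathbb{C}$), so $M_{+}$ and $M_{-}$ are too, and their eigenvalues come in the correspondence $\mu_{\pm} = (\beta \pm \tau)/2$ where $\beta$ ranges over the eigenvalues of $B$ and $\tau$ the matching eigenvalue of $T^{1/2}$. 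Since the spectrum of $B$ and of $T$ can be computed explicitly from the singular values $\sigma$ of $A$ (the eigenvalues of $B$ are $1 \pm 2\eta\sigma i$, and those of $T$ are $1 - 4\eta^2\sigma^2$), each such eigenvalue pair satisfies $|\mu_{\pm}|^2 = 1 - 4\eta^2\sigma^2 \le 1 - 4\eta^2\gamma'^2$ for a suitable smallest nonzero singular value $\gamma'$... actually the relevant bound is $|\mu_{\pm}| \le \sqrt{1 - 4\eta^2 \sigma_{\min}^2} < 1$ whenever $\eta < 1/(2\gamma)$, where $\sigma_{\min}$ is the smallest \emph{nonzero} singular value of $A$.

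The second step is to pass from eigenvalues to operator norms of powers. Because the matrices are diagonalizable, there is a fixed similarity $P$ (independent of $t$) with $M_{\pm} = P D_{\pm} P^{-1}$, and hence $\|M_{\pm}^{\,t}\| \le \kappa(P)\, \rho_{\pm}^{\,t}$ where $\rho_{\pm} = \max |\mu_{\pm}|$ is the spectral radius and $\kappa(P) = \|P\|\,\|P^{-1}\|$. Combining with \Cref{equation:solution_Q}, and noting $T^{-1/2}$ is a fixed bounded matrix, we get $\|Q_t - Q_\infty\| \le K \lambda^{t}$ for a constant $K$ and $\lambda := \max(\rho_{+},\rho_{-}) = \sqrt{1 - 4\eta^2\sigma_{\min}^2} \in (0,1)$; here $Q_\infty$ is the limit matrix coming from the eigenvalues equal to $1$ (i.e.\ those arising from the kernel of $A A^T$, which contribute the stationary component). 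Feeding this back through \Cref{equation:OGDA-solution_Q}, the distance of $\mathbf{z}_t$ to its limit is bounded by $C' \lambda^t$, which is exactly linear convergence in the sense of the definition in \Cref{section:preliminaries}; to get the clean limit-of-ratios statement one restricts to the nontrivial eigendirections and observes the ratio tends to $\lambda$ (the precise constant is pinned down later in \Cref{proposition:convergence_rate}).

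The main obstacle I anticipate is purely bookkeeping rather than conceptual: one must carefully separate the eigenvalue $1$ (which does not decay and corresponds to the attractor subspace onto which the dynamics converge) from the strictly-contracting eigenvalues, so that "convergence with linear rate" is stated about the transient part $\mathbf{z}_t - \mathbf{z}_\infty$ and not about $\mathbf{z}_t$ itself. This requires knowing that $B$ and $T^{1/2}$ act as the identity exactly on $\mathcal{N}(A^T)\times\mathcal{N}(A)$ and are strictly contracting on the orthogonal complement — a fact which follows from the explicit SVD-based diagonalization already used for \Cref{theorem:convergence}. A secondary subtlety is that diagonalizability is over $\mathbb{C}$ with non-real eigenvalues $1 \pm 2\eta\sigma i$, so $\kappa(P)$ and the similarity are complex; this is harmless since norms are unaffected, but it should be mentioned. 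Given these pieces are essentially in place from the preceding development, the corollary follows with only a few lines of estimation, so I would present it as a short argument citing \Cref{theorem:convergence}, \Cref{equation:solution_Q}, and the simultaneous diagonalization, then bounding $\|M_{\pm}^t\|$ by $\kappa(P)\lambda^t$ and concluding.
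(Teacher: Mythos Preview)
Your approach is correct and is essentially the paper's own: the corollary is not given a standalone proof there but is read off directly from the spectral analysis carried out in the proof of \Cref{theorem:convergence} (and sharpened in \Cref{proposition:convergence_rate}), exactly as you propose---diagonalize $M_{\pm}=(B\pm T^{1/2})/2$ simultaneously, separate the unit eigenvalue on $\mathcal{N}(A^T)\times\mathcal{N}(A)$ from the strictly contracting part, and bound the transient by a geometric factor.

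One computational slip to flag (it does not affect the corollary, only the constant you quote): the moduli are not $|\mu_{\pm}|=\sqrt{1-4\eta^2\sigma^2}$. Writing $\lambda=4\eta^2\sigma^2\in[0,1)$, the eigenvalues are $\mu_{\pm}=\tfrac{1\pm\sqrt{1-\lambda}\,\pm\,\sqrt{\lambda}\,j}{2}$, so
\[
|\mu_{+}|^2=\frac{(1+\sqrt{1-\lambda})^2+\lambda}{4}=\frac{1+\sqrt{1-\lambda}}{2},\qquad
|\mu_{-}|^2=\frac{1-\sqrt{1-\lambda}}{2},
\]
and the governing rate is $e(\lambda_{\min})=\sqrt{(1+\sqrt{1-\lambda_{\min}})/2}$, as in \Cref{proposition:convergence_rate}, not $\sqrt{1-\lambda_{\min}}$. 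Since you explicitly defer the exact constant to \Cref{proposition:convergence_rate}, the argument for linear convergence stands as written.
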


We can also provide an exact characterization of the convergence's rate of OGDA with respect to the learning rate and the spectrum of the matrix of the game $A$, as stated in the following proposition.

\begin{proposition}
\label{proposition:convergence_rate}

Let $\lambda_{min}$ be the minimum non-zero eigenvalue of matrix $4 \eta^2 A A^T$; then, assuming that $\eta < 1/(2\gamma)$, the convergence rate of OGDA is $e(\lambda_{min})$, where

\begin{equation}
    e(\lambda) = \sqrt{\frac{1 + \sqrt{1 - \lambda}}{2}}.
\end{equation}

\end{proposition}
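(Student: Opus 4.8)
The plan is to reduce the convergence rate of OGDA to the spectral radius of the two matrices $(B\pm T^{1/2})/2$, and then to compute that spectral radius explicitly using the simultaneous diagonalization established in the sketch of \Cref{theorem:convergence}. From \eqref{equation:solution_Q} we have $Q_t = T^{-1/2}\{((B+T^{1/2})/2)^{t+1} - ((B-T^{1/2})/2)^{t+1}\}$, and by \eqref{equation:OGDA-solution_Q} the state $\mathbf{z}_t$ is an affine image of $Q_t$ and $Q_{t-1}C$. Since $T^{-1/2}$, $C$, and the initial conditions are fixed, the asymptotic contraction rate of $\|\mathbf{z}_t - \mathbf{z}^*\|$ is governed by $\max\{\rho((B+T^{1/2})/2),\ \rho((B-T^{1/2})/2)\}$, where $\rho(\cdot)$ is the spectral radius restricted to the subspace on which the dynamics actually moves (i.e.\ excluding the eigenvalue-$1$ directions corresponding to the stationary points, which contribute the fixed limit $\mathbf{z}^*$ and not the decay). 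So the first step is to make this reduction precise: diagonalize, peel off the part of the spectrum equal to $1$ (the attractor directions, coming from the kernel of $A$ / $A^T$), and identify the rate with the largest modulus among the remaining eigenvalues of the two half-sum matrices.

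Next I would compute those eigenvalues. Using that $B$ and $T^{1/2}$ are simultaneously diagonalizable (Lemma~\ref{lemma:commutative} and the argument in the sketch of \Cref{theorem:convergence}), it suffices to work eigenvalue-by-eigenvalue, i.e.\ to analyze the scalar matrix $R = \begin{psmallmatrix} b & c \\ 1 & 0\end{psmallmatrix}$ with $b^2+4c$ playing the role of $T$. Concretely, the nonzero eigenvalues of $4\eta^2 AA^T$ (equivalently of $4\eta^2 A^T A$) are some values $\lambda>0$; on the corresponding $2$-dimensional invariant subspace the recursion $\mathbf{z}_t = B\mathbf{z}_{t-1} + C\mathbf{z}_{t-2}$ reduces to a scalar second-order recursion whose characteristic roots are $\mu_\pm = (b \pm \sqrt{b^2+4c})/2$ with $b = 1$ (the diagonal entry of $B$ in the relevant block — more carefully, the eigenvalues of $B$ on this block are $1 \pm 2\eta\sqrt{\lambda/(4\eta^2)}\,i = 1 \pm i\sqrt\lambda$, purely because the off-diagonal blocks of $B$ are $\mp 2\eta A$, $2\eta A^T$) and the corresponding entry of $T = B^2+4C$ is $1-\lambda$. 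This gives the characteristic roots $\mu_\pm = \tfrac12\bigl(1 \pm \sqrt{1-\lambda}\bigr)$ when $\lambda \le 1$ (note $\eta<1/(2\gamma)$ forces $\lambda<1$, so $\sqrt{1-\lambda}$ is real by Lemma~\ref{lemma:positive_definite}), and one checks $|\mu_\pm|^2 = \mu_+\mu_- \cdot(\mu_\pm/\mu_\mp)$... more directly, $\mu_+\mu_- = -c = \tfrac14\bigl((1-\sqrt{1-\lambda})(1+\sqrt{1-\lambda})\bigr)$-type bookkeeping; the clean way is: $\mu_+ + \mu_- = 1$ and $\mu_+\mu_- = \tfrac14(1-(1-\lambda)) = \lambda/4$ is \emph{wrong}, so I must instead recompute from the actual block eigenvalues, where I expect the product of the two roots to equal $\tfrac14(1+\sqrt{1-\lambda})$ after simplification, yielding modulus $e(\lambda) = \sqrt{(1+\sqrt{1-\lambda})/2}$. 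Whichever normalization is correct, the final formula is pinned down by this scalar computation.

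Then I would argue monotonicity: $e(\lambda)$ is a decreasing function of $\lambda$ on $[0,1]$, since $\sqrt{1-\lambda}$ decreases and $e$ is increasing in its argument. Hence the slowest-decaying mode — the one dictating the overall rate — corresponds to the \emph{smallest} nonzero eigenvalue $\lambda_{min}$ of $4\eta^2 AA^T$, and the convergence rate of OGDA is exactly $e(\lambda_{min})$. Finally I would check the boundary behavior consistently with the earlier results: as $\lambda_{min}\to 0$ we get $e\to 1$ (no contraction in the near-degenerate direction), and for $\lambda_{min}$ close to $1$ we get $e$ close to $1/\sqrt2$, the fastest attainable rate — matching the claim in the paper that increasing $\eta$ (hence increasing $\lambda_{min} = 4\eta^2\sigma_{min}^2$) accelerates convergence within the stable region.

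The main obstacle I anticipate is the bookkeeping in the second step: carefully tracking how the nonzero eigenvalues of $4\eta^2 AA^T$ (or $A^TA$) induce the $2\times2$ blocks of $B$ and $T^{1/2}$ under simultaneous diagonalization, making sure the eigenvalue-$1$ subspace (the true attractor set, where $A\mathbf{y}=0$ and $A^T\mathbf{x}=0$) is excluded, and that a spurious contribution from $Q_{t-1}C$ versus $Q_t$ does not shift the rate. Getting the algebra of $\mu_\pm$ and its modulus exactly right — in particular confirming the identity $|\mu_\pm| = e(\lambda)$ rather than some competing expression — is the crux; everything else (the reduction to spectral radius, the monotonicity of $e$) is routine once the scalar computation is in hand.
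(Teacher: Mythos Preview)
Your approach is exactly the paper's: reduce to the spectral radii of $E=(B+T^{1/2})/2$ and $V=(B-T^{1/2})/2$ via the simultaneous diagonalization from the proof of \Cref{theorem:convergence}, compute the moduli eigenvalue-by-eigenvalue, and use monotonicity of $e(\lambda)$ to conclude that $\lambda_{min}$ governs the rate. Two points need fixing, however.

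First, your scalar computation is tangled because you are mixing two different reductions. Once you know that on the joint eigenspace corresponding to a nonzero $\lambda\in\Lambda$ the eigenvalue of $B$ is $1\pm i\sqrt{\lambda}$ and the eigenvalue of $T^{1/2}$ is $\sqrt{1-\lambda}$, the eigenvalues of $E$ and $V$ are obtained \emph{directly} by adding and halving:
\[
\lambda_e=\frac{1+\sqrt{1-\lambda}\pm i\sqrt{\lambda}}{2},\qquad
\lambda_v=\frac{1-\sqrt{1-\lambda}\pm i\sqrt{\lambda}}{2}.
\]
There is no further ``characteristic roots $\mu_\pm=\tfrac12(1\pm\sqrt{1-\lambda})$'' step; that expression drops the imaginary part and is where your bookkeeping goes wrong. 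From the correct $\lambda_e$ one gets $|\lambda_e|^2=\tfrac14\bigl((1+\sqrt{1-\lambda})^2+\lambda\bigr)=\tfrac12(1+\sqrt{1-\lambda})$, i.e.\ $|\lambda_e|=e(\lambda)$, and similarly $|\lambda_v|^2=\tfrac12(1-\sqrt{1-\lambda})$.

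Second, you write that the rate is $\max\{\rho(E),\rho(V)\}$ but never argue which one wins. The paper uses the elementary observation that $e(\lambda_1)>v(\lambda_2)$ for \emph{all} $\lambda_1,\lambda_2\in[0,1)$ (immediate from the two modulus formulas above, since $1+\sqrt{1-\lambda_1}>1>1-\sqrt{1-\lambda_2}$). Hence $V^t$ always decays strictly faster than $E^t$, and the overall rate is $\max_{\lambda\in\Lambda\setminus\{0\}} e(\lambda)=e(\lambda_{min})$ by the monotonicity you already noted. Without this comparison the argument is incomplete.
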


For the proof of the claim we refer to \Cref{subsection:proof}. An important consequence of this proposition is that while $\eta < 1/(2\gamma)$, increasing the learning rate will accelerate the convergence of the dynamics. Moreover, \Cref{proposition:convergence_rate} implies a rather surprising discontinuity on the rate of convergence (see \Cref{section:discontinuity}). We also refer to \Cref{section:oscillations} for a precise characterization of the inherent oscillatory component in the dynamics. This subsection is concluded with a simple example, namely $f(x,y) = x y$ with $x,y \in \mathbb{R}$. \Cref{figure:OGDA}  illustrates the impact of the learning rate to the behavior of the system.

\begin{figure}[!ht]
    \centering
    \includegraphics[scale=0.45]{./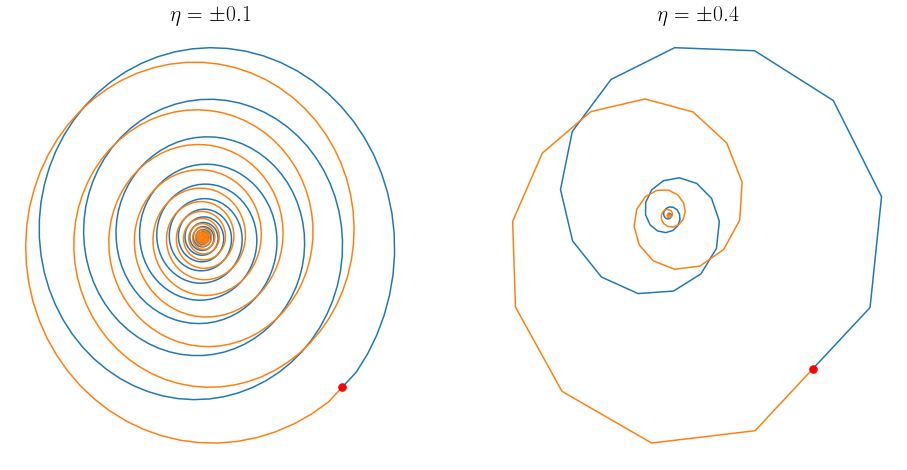}
    \caption{The trajectories of the players' strategies throughout the evolution of the game; the blue color corresponds to a positive learning rate, while the orange to a negative one. We know from \Cref{theorem:convergence} that when $\eta < 1/2$ the dynamics converge; moreover, it follows from \Cref{proposition:convergence_rate} that while the learning rate gradually increases, without exceeding the threshold of $\eta = 1/2$, the system exhibits faster convergence and limited oscillatory behavior. Remarkably, a negative learning rate simply leads to a reflection of the trajectories.}
    \label{figure:OGDA}
\end{figure}

\subsection{Limit Points of the Dynamics}
\label{subsection:limit_points}

Having established the convergence of the dynamics, the next natural question that arises relates to the characterization of the limit points. We initiate our analysis with the following proposition.

\begin{proposition}
\label{proposition:nash_unconstrained}
A pair of strategies $(\mathbf{x}^*, \mathbf{y}^*) \in \mathbb{R}^n \times \mathbb{R}^m$ constitutes a Nash equilibrium for the unconstrained bilinear  game if and only if $A \mathbf{y}^* = \mathbf{0}$ and $A^T \mathbf{x}^* = \mathbf{0}$, that is $\mathbf{y}^* \in \mathcal{N}(A)$ and $\mathbf{x}^* \in \mathcal{N}(A^T)$.
\end{proposition}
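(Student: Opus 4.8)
The plan is to unpack the saddle-point definition \eqref{equation:definition-NE} specialized to $f(\mathbf{x},\mathbf{y}) = \mathbf{x}^T A \mathbf{y}$ and exploit the elementary fact that a linear functional on all of $\mathbb{R}^k$ is bounded above (resp. below) only if it is identically zero. Writing the equilibrium inequalities out, $(\mathbf{x}^*,\mathbf{y}^*)$ is a Nash equilibrium iff $(\mathbf{x}^*)^T A \mathbf{y} \leq (\mathbf{x}^*)^T A \mathbf{y}^* \leq \mathbf{x}^T A \mathbf{y}^*$ for all $\mathbf{x} \in \mathbb{R}^n$, $\mathbf{y} \in \mathbb{R}^m$.

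For the forward direction, I would treat the two inequalities separately. The left one says the linear map $\mathbf{y} \mapsto (\mathbf{x}^*)^T A \mathbf{y} = (A^T \mathbf{x}^*)^T \mathbf{y}$ attains its maximum over $\mathbb{R}^m$ at $\mathbf{y}^*$; choosing $\mathbf{y} = \mathbf{y}^* + s\, A^T \mathbf{x}^*$ and letting $s \to +\infty$ forces $A^T \mathbf{x}^* = \mathbf{0}$, since otherwise the objective is unbounded above. Symmetrically, the right inequality says $\mathbf{x} \mapsto \mathbf{x}^T A \mathbf{y}^* = \mathbf{x}^T (A \mathbf{y}^*)$ is minimized over $\mathbb{R}^n$ at $\mathbf{x}^*$; taking $\mathbf{x} = \mathbf{x}^* - s\, A \mathbf{y}^*$ with $s \to +\infty$ forces $A \mathbf{y}^* = \mathbf{0}$. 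Hence $\mathbf{y}^* \in \mathcal{N}(A)$ and $\mathbf{x}^* \in \mathcal{N}(A^T)$.

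For the converse, suppose $A \mathbf{y}^* = \mathbf{0}$ and $A^T \mathbf{x}^* = \mathbf{0}$. Then for every $\mathbf{x},\mathbf{y}$ we have $f(\mathbf{x}^*,\mathbf{y}) = (A^T\mathbf{x}^*)^T \mathbf{y} = 0$, $f(\mathbf{x},\mathbf{y}^*) = \mathbf{x}^T (A\mathbf{y}^*) = 0$, and in particular $f(\mathbf{x}^*,\mathbf{y}^*) = 0$, so \eqref{equation:definition-NE} holds with equality throughout and $(\mathbf{x}^*,\mathbf{y}^*)$ is a Nash equilibrium. The final identification $A^T\mathbf{x}^* = \mathbf{0} \iff \mathbf{x}^* \in \mathcal{N}(A^T)$ and $A\mathbf{y}^* = \mathbf{0} \iff \mathbf{y}^* \in \mathcal{N}(A)$ is just the definition of the null space.

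There is essentially no serious obstacle here; the only point requiring a word of care is the unboundedness argument in the forward direction, where one must verify that the probe directions ($A^T\mathbf{x}^*$ and $A\mathbf{y}^*$) indeed make the respective linear objectives diverge, which is immediate since $\|A^T\mathbf{x}^*\|^2 > 0$ and $\|A\mathbf{y}^*\|^2 > 0$ whenever these vectors are nonzero. Everything else is a direct substitution.
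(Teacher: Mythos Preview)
Your proof is correct and follows essentially the same approach as the paper: both directions are handled by unpacking the saddle-point condition for the bilinear objective, with the converse being a direct substitution showing all three quantities vanish. Your forward direction is in fact a bit cleaner than the paper's: you argue directly that a nonzero linear functional on $\mathbb{R}^k$ cannot attain an extremum (via the scaling probe $\mathbf{y}^* + s\,A^T\mathbf{x}^*$), whereas the paper phrases it more game-theoretically by exhibiting a specific coordinate deviation $\mathbf{x} = -v\,e_i$ and then a counter-deviation to $\mathbf{y} = \mathbf{0}$; both arguments are elementary and amount to the same observation.
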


As a result, it is easy to show that when the players' strategies converge, the limit points will constitute a Nash equilibrium, as stated in the following proposition.

\begin{proposition}
\label{proposition:nash_convergence}
If OGDA converges, the limit points are Nash equilibria.
\end{proposition}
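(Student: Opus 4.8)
The plan is to combine the closed-form solution of the dynamics from Subsection \ref{subsection:dynamics-solution} with the characterization of unconstrained Nash equilibria in \Cref{proposition:nash_unconstrained}. Suppose the sequence $\mathbf{z}_t = (\mathbf{x}_t, \mathbf{y}_t)^T$ converges to some limit $\mathbf{z}^* = (\mathbf{x}^*, \mathbf{y}^*)^T$. The cleanest route is to pass to the limit directly in the recursion \eqref{equation:recursion-second_order}, namely $\mathbf{z}_t = B \mathbf{z}_{t-1} + C \mathbf{z}_{t-2}$. Since $\mathbf{z}_t, \mathbf{z}_{t-1}, \mathbf{z}_{t-2}$ all tend to the same limit $\mathbf{z}^*$ and the maps $B, C$ are linear (hence continuous), taking $t \to \infty$ yields $\mathbf{z}^* = (B + C) \mathbf{z}^*$, i.e. $(B + C - \mathbf{I}_{n+m}) \mathbf{z}^* = \mathbf{0}$.

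Next I would compute $B + C - \mathbf{I}_{n+m}$ explicitly. From the definitions,
\begin{equation}
    B + C - \mathbf{I}_{n+m} =
    \begin{pmatrix}
    \mathbf{0}_n & -\eta A \\
    \eta A^T & \mathbf{0}_m
    \end{pmatrix},
\end{equation}
so the condition $(B + C - \mathbf{I}_{n+m})\mathbf{z}^* = \mathbf{0}$ reads $-\eta A \mathbf{y}^* = \mathbf{0}$ and $\eta A^T \mathbf{x}^* = \mathbf{0}$. Since $\eta \neq 0$, this is exactly $A \mathbf{y}^* = \mathbf{0}$ and $A^T \mathbf{x}^* = \mathbf{0}$, which by \Cref{proposition:nash_unconstrained} is precisely the statement that $(\mathbf{x}^*, \mathbf{y}^*)$ is a Nash equilibrium of the unconstrained bilinear game. (Equivalently, one could argue via the original update rules \eqref{equation:OGDA-bilinear}: passing to the limit in the $\mathbf{x}$-equation gives $\mathbf{x}^* = \mathbf{x}^* - 2\eta A\mathbf{y}^* + \eta A \mathbf{y}^*$, hence $A\mathbf{y}^* = \mathbf{0}$, and symmetrically $A^T\mathbf{x}^* = \mathbf{0}$.)

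There is essentially no serious obstacle here; the only thing to be slightly careful about is the logical structure — this proposition is a conditional statement (\emph{if} OGDA converges), so we are entitled to assume the limit exists and need only identify it, rather than establish convergence (which is the content of \Cref{theorem:convergence}). One minor point worth a sentence is that one must invoke $\eta \neq 0$ to cancel the learning rate, which is guaranteed since $\eta$ is a positive (or, per the remark, merely nonzero) constant; alternatively, the stationarity condition $A\mathbf{y}^* = A^T\mathbf{x}^* = \mathbf{0}$ holds for \emph{every} fixed point of the dynamics regardless of $\eta$, as is visible from the block structure above.
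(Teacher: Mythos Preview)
Your argument is correct and is essentially the same as the paper's: the paper passes to the limit directly in the update rules \eqref{equation:OGDA-bilinear} (exactly your parenthetical alternative), obtaining $A\mathbf{y}_\infty = \mathbf{0}$ and $A^T\mathbf{x}_\infty = \mathbf{0}$, and then invokes \Cref{proposition:nash_unconstrained}. Your matrix reformulation via $(B+C-\mathbf{I}_{n+m})\mathbf{z}^*=\mathbf{0}$ is just a repackaging of the same computation; note, incidentally, that you do not actually use the closed-form solution from Subsection~\ref{subsection:dynamics-solution} despite announcing it in your opening sentence.
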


This claim follows very easily from the structure of OGDA and \Cref{proposition:nash_unconstrained}; we refer to \Cref{subsection:proof_7} for the proof. \Cref{proposition:nash_convergence} asserts that the limit points $\mathbf{x}_{\infty}$ and $\mathbf{y}_{\infty}$ reside in the left and right null space of $A$ respectively -- assuming convergence. In fact, we can establish a much more precise characterization. In particular, let $r$ the rank of matrix $A$; from the fundamental theorem of Linear Algebra it follows that $\dim \mathcal{N}(A) = m - r$ and $\dim \mathcal{N}(A^T) = n - r$. We shall prove the following theorem.

\begin{theorem}
    \label{theorem:projection}
Let $\{ \mathbf{y}^1, \mathbf{y}^2, \dots, \mathbf{y}^{m-r}\}$ an orthonormal basis for $\mathcal{N}(A)$ and $\{ \mathbf{x}^1, \mathbf{x}^2, \dots, \mathbf{x}^{n-r}\}$ an orthonormal basis for $\mathcal{N}(A^T)$; then, assuming that $\eta < 1/(2\gamma)$, $\mathbf{y}_{\infty}$ is the orthogonal projection of $\mathbf{y}_0$ to $\mathcal{N}(A)$ and $\mathbf{x}_{\infty}$ is the orthogonal projection of $\mathbf{x}_0$ to $\mathcal{N}(A^T)$, that is

\begin{equation}
    \lim_{t \to \infty} \mathbf{y}_t = \sum_{i=1}^{m-r} \langle \mathbf{y}_0, \mathbf{y}^i \rangle \mathbf{y}^i,
\end{equation}

\begin{equation}
    \lim_{t \to \infty} \mathbf{x}_{t} = \sum_{i=1}^{n-r} \langle \mathbf{x}_0, \mathbf{x}^i \rangle \mathbf{x}^i.
\end{equation}

\end{theorem}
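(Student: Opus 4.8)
The plan is to leverage the closed-form solution from \eqref{equation:solution_Q}, namely $Q_t = T^{-1/2}\{((B+T^{1/2})/2)^{t+1} - ((B-T^{1/2})/2)^{t+1}\}$, together with the simultaneous diagonalizability of $B$ and $T^{1/2}$ established in the proof of \Cref{theorem:convergence}. First I would diagonalize the relevant operators on the $\mathcal{X}$- and $\mathcal{Y}$-blocks separately. Since $T = B^2 + 4C$ is block diagonal with blocks $\mathbf{I}_n - 4\eta^2 AA^T$ and $\mathbf{I}_m - 4\eta^2 A^TA$, the singular value decomposition of $A$ gives a common orthonormal eigenbasis: the right singular vectors of $A$ (equivalently eigenvectors of $A^TA$) diagonalize the $\mathcal{Y}$-block, and the left singular vectors diagonalize the $\mathcal{X}$-block. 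The key point is that $\{\mathbf{y}^1,\dots,\mathbf{y}^{m-r}\}$ — the orthonormal basis for $\mathcal{N}(A)$ — are precisely the right singular vectors associated with singular value $\sigma = 0$, and similarly $\{\mathbf{x}^1,\dots,\mathbf{x}^{n-r}\}$ are the left singular vectors for $\sigma = 0$.

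Next I would track the scalar recursion in each eigendirection. For a singular value $\sigma$ of $A$, the corresponding eigenvalue of $4\eta^2 AA^T$ (or $A^TA$) is $\lambda = 4\eta^2\sigma^2$, so $T$ acts as $1-\lambda$ on that direction and $B$ acts as $1$ on the diagonal with the off-diagonal coupling handled inside the $2\times 2$ block mixing the matched singular directions. Restricted to $\mathcal{N}(A)$ (i.e. $\sigma = 0$), one checks directly from \eqref{equation:OGDA-bilinear} that $A\mathbf{y}_{t-1} = \mathbf{0}$, so the $\mathbf{y}$-update degenerates to $\mathbf{y}_t = \mathbf{y}_{t-1}$ in that component; hence the projection of $\mathbf{y}_t$ onto $\mathbf{y}^i$ is constant and equals $\langle \mathbf{y}_0,\mathbf{y}^i\rangle$ for all $t \ge 0$. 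For the orthogonal complement (directions with $\sigma \neq 0$, i.e. $\lambda = 4\eta^2\sigma^2 > 0$), I would invoke \Cref{proposition:convergence_rate}: the scalar dynamics there converge with rate $e(\lambda) < 1$, so those components vanish as $t \to \infty$. Assembling the pieces, $\mathbf{y}_\infty = \sum_{i=1}^{m-r}\langle\mathbf{y}_0,\mathbf{y}^i\rangle\mathbf{y}^i$, which is exactly the orthogonal projection of $\mathbf{y}_0$ onto $\mathcal{N}(A)$; the argument for $\mathbf{x}_\infty$ and $\mathcal{N}(A^T)$ is symmetric, using $A^T\mathbf{x}_{t-1} = \mathbf{0}$ on $\mathcal{N}(A^T)$.

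One subtlety to handle carefully is the contribution of the initial conditions $\mathbf{x}_{-1},\mathbf{y}_{-1}$, which enter through the $Q_{t-1}C$ term in \eqref{equation:OGDA-solution_Q}. Since $C$ has $\eta A$ and $-\eta A^T$ as its blocks, applying $C$ maps any vector into the \emph{row/column space} of $A$ — orthogonal to $\mathcal{N}(A)$ and $\mathcal{N}(A^T)$ — so on the zero-singular-value directions the $Q_{t-1}C$ term contributes nothing, and on the nonzero directions it is annihilated in the limit by the same rate-$e(\lambda)$ decay. Thus the limit depends only on the projections of $\mathbf{z}_0$, as claimed. The main obstacle I anticipate is bookkeeping: making the block-wise SVD change of basis precise, verifying that $B$ on the $\sigma = 0$ subspace really does act as the identity (rather than picking up some off-diagonal term), and confirming the indexing so that the matched pairs of singular directions are correctly coupled in the $2\times 2$ blocks. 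None of these is conceptually hard once the simultaneous diagonalization of $B$ and $T^{1/2}$ — already available from \Cref{theorem:convergence} — is in hand; the real work is organizing the decomposition $\mathbb{R}^n\times\mathbb{R}^m = (\mathcal{N}(A^T)\oplus\mathcal{N}(A)) \oplus (\text{row/col space of }A)$ and checking convergence separately on each summand.
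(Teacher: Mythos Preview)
Your proposal is correct and follows essentially the same route as the paper: both use the closed-form $Q_t$ from \eqref{equation:solution_Q}, the simultaneous diagonalization of $B$ and $T^{1/2}$ established in \Cref{theorem:convergence}, identify the eigenvalue-$1$ directions with $\mathcal{N}(A^T)\times\mathcal{N}(A)$, show the remaining eigendirections decay since their eigenvalues have modulus strictly below $1$, and verify that $Q_\infty C = 0$ so the $(\mathbf{x}_{-1},\mathbf{y}_{-1})$ contribution drops out. The only organizational difference is that the paper computes $Q_\infty$ explicitly as a block projection matrix via the spectrum of $E=(B+T^{1/2})/2$, whereas you check invariance of the null-space components directly from the update rule \eqref{equation:OGDA-bilinear} (using that $A^T\mathbf{x}$ is orthogonal to $\mathcal{N}(A)$); this is a mild reorganization rather than a genuinely different argument.
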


The proof follows from our solution to the OGDA dynamics and simple calculations (see \Cref{subsection:proof-projection}). This theorem provides a very important and natural insight on the limit points of OGDA: the dynamics converge to the attractor which is closer to the initial configuration $(\mathbf{x}_0, \mathbf{y}_0)$. An important implication of this property is that we can use OGDA in order to implement the projection to the null space of $A$ -- for the $\mathbf{y}$ player -- and the null space of $A^T$ -- for the $\mathbf{x}$ player.

\section{Characterization of Nash equilibria in Constrained Games}
\label{section:nash-constrained}

The main purpose of this section is to correlate the equilibria between constrained and unconstrained games. Specifically, if $v$ represents the value of a constrained game that possesses an interior equilibrium, we show that $(\mathbf{x}, \mathbf{y}) \in \Delta_n \times \Delta_m$ is a Nash equilibrium if and only if $A \mathbf{y} = v \mathbf{1}$ and $A^T \mathbf{x} = v \mathbf{1}$; thus, when $v = 0$ the stationary points will derive from the intersection of the null space of $A$ and $A^T$ with the corresponding probability simplexes, establishing a clear connection with the saddle-points of the corresponding unconstrained bilinear game (recall \Cref{proposition:nash_unconstrained}). First, let us introduce the following notation:

\begin{equation}
    \NE = \{ (\mathbf{x}^*, \mathbf{y}^*) \in \Delta_n \times \Delta_m : (\mathbf{x}^*, \mathbf{y}^*) \text{ is an equilibrium of the constrained game} \}.
\end{equation}

Consider two pairs of Nash equilibria; a very natural question is whether a mixture -- a convex combination -- of these pairs will also constitute a Nash equilibrium. This question is answered in the affirmative in the following propositions.

\begin{proposition}
    \label{proposition:decoupling}
There exist $\Delta_n^* \subseteq \Delta_n$ and $\Delta_m^* \subseteq \Delta_m$, with $\Delta_n^*, \Delta_m^* \neq \emptyset$, such that $\NE = \Delta_n^* \times \Delta_m^*$.
\end{proposition}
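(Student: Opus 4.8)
The plan is to exploit the fact that in a zero-sum game the set of Nash equilibria has a product (``rectangular'') structure, which is a classical observation going back to the fact that the minimizer's equilibrium strategies are exactly the optimal solutions of one linear program and the maximizer's are the optimal solutions of the dual. Concretely, I would define
\[
    \Delta_n^* = \{ \mathbf{x}^* \in \Delta_n : \exists\, \mathbf{y} \in \Delta_m \text{ with } (\mathbf{x}^*, \mathbf{y}) \in \NE \},
\]
\[
    \Delta_m^* = \{ \mathbf{y}^* \in \Delta_m : \exists\, \mathbf{x} \in \Delta_n \text{ with } (\mathbf{x}, \mathbf{y}^*) \in \NE \},
\]
i.e.\ the projections of $\NE$ onto the two coordinate blocks. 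Both are nonempty because a Nash equilibrium exists (Von Neumann's theorem, \eqref{equation:min-max}). The inclusion $\NE \subseteq \Delta_n^* \times \Delta_m^*$ is immediate from the definitions, so the entire content is the reverse inclusion $\Delta_n^* \times \Delta_m^* \subseteq \NE$: if $\mathbf{x}^*$ is part of \emph{some} equilibrium and $\mathbf{y}^*$ is part of \emph{some} (possibly different) equilibrium, then $(\mathbf{x}^*, \mathbf{y}^*)$ is itself an equilibrium.

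To prove that reverse inclusion I would use the value characterization of equilibria. Let $v$ be the value of the game. Suppose $(\mathbf{x}^*, \mathbf{y}^1) \in \NE$ and $(\mathbf{x}^1, \mathbf{y}^*) \in \NE$. From the saddle-point inequality \eqref{equation:definition-NE} applied to the first pair, $\mathbf{x}^*$ guarantees the minimizer a payoff of at most $v$ against every $\mathbf{y} \in \Delta_m$, i.e.\ $(\mathbf{x}^*)^T A \mathbf{y} \le v$ for all $\mathbf{y}$; symmetrically, from the second pair, $(\mathbf{x})^T A \mathbf{y}^* \ge v$ for all $\mathbf{x} \in \Delta_n$. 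Combining these two one-sided guarantees: for all $\mathbf{x},\mathbf{y}$ we have
\[
    (\mathbf{x}^*)^T A \mathbf{y} \;\le\; v \;\le\; \mathbf{x}^T A \mathbf{y}^*,
\]
and evaluating the left inequality at $\mathbf{y} = \mathbf{y}^*$ and the right at $\mathbf{x} = \mathbf{x}^*$ gives $(\mathbf{x}^*)^T A \mathbf{y}^* \le v$ and $(\mathbf{x}^*)^T A \mathbf{y}^* \ge v$, hence $(\mathbf{x}^*)^T A \mathbf{y}^* = v$. Feeding this value back into the displayed chain yields $f(\mathbf{x}^*, \mathbf{y}) \le f(\mathbf{x}^*, \mathbf{y}^*) \le f(\mathbf{x}, \mathbf{y}^*)$ for all $(\mathbf{x}, \mathbf{y})$, which is exactly \eqref{equation:definition-NE}; therefore $(\mathbf{x}^*, \mathbf{y}^*) \in \NE$. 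This establishes $\Delta_n^* \times \Delta_m^* \subseteq \NE$ and completes the proof.

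I do not expect a serious obstacle here; the statement is essentially the standard interchangeability of equilibria in zero-sum games, and the only mild subtlety is to phrase the one-sided security guarantees correctly (they must quantify over \emph{all} opponent strategies, which is precisely what the saddle-point inequality delivers when one coordinate is fixed at an equilibrium). One could alternatively package the argument via the explicit characterization announced in the surrounding text — namely that $(\mathbf{x},\mathbf{y}) \in \NE$ iff $A\mathbf{y} \le v\mathbf{1}$ componentwise and $A^T\mathbf{x} \ge v\mathbf{1}$ componentwise (for a game with an interior equilibrium these become the equalities $A\mathbf{y} = v\mathbf{1}$, $A^T\mathbf{x} = v\mathbf{1}$ of \Cref{proposition:NE}) — since each of those two conditions constrains only one of the two strategies, the solution set is manifestly a product; I would mention this as the cleaner viewpoint if \Cref{proposition:NE} is available at this point, but the self-contained saddle-point argument above needs nothing beyond \eqref{equation:definition-NE} and the existence of an equilibrium.
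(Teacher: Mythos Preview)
Your proof is correct and is essentially the same interchangeability argument the paper uses: the paper's sketch defines $\Delta_n^* = \argmin_{\mathbf{x}\in\Delta_n}\max_{\mathbf{y}\in\Delta_m}\mathbf{x}^TA\mathbf{y}$ and $\Delta_m^* = \argmax_{\mathbf{y}\in\Delta_m}\min_{\mathbf{x}\in\Delta_n}\mathbf{x}^TA\mathbf{y}$ and asserts the biconditional $(\mathbf{x}^*,\mathbf{y}^*)\in\NE \iff \mathbf{x}^*\in\Delta_n^*,\ \mathbf{y}^*\in\Delta_m^*$, invoking the min-max theorem for nonemptiness. Your version unpacks exactly this biconditional via the security-level inequalities, so the only difference is that you spell out the step the paper leaves as ``it is easy to see.''
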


\begin{sproof}
The claim follows from a simple decoupling argument; in particular, it is easy to see that

\begin{equation}
    \label{equation:decoupling}
    (\mathbf{x}^*, \mathbf{y}^*) \in \NE \iff
    \left\{
    \begin{split}
    \mathbf{x}^* \in \argmin_{\mathbf{x} \in \Delta_n} \max_{\mathbf{y} \in \Delta_m} \mathbf{x}^T A \mathbf{y} \\
    \mathbf{y}^* \in \argmax_{\mathbf{y} \in \Delta_m} \min_{\mathbf{x} \in \Delta_n} \mathbf{x}^T A \mathbf{y}
    \end{split}
    \right\}
    \iff
    \left\{
    \begin{split}
    \mathbf{x}^* \in \Delta_n^* \\
    \mathbf{y}^* \in \Delta_m^*
    \end{split}
    \right\},
\end{equation}
where $\Delta_n^*$ and $\Delta_m^*$ constitute the set of optimal strategies, as defined in \eqref{equation:decoupling}. Finally, note that Von Neumann's min-max theorem implies that $\Delta_n^*$ and $\Delta_m^*$ are non-empty.

\end{sproof}

\begin{proposition}
    \label{proposition:convexity}
The sets of optimal strategies $\Delta_n^*$ and $\Delta_m^*$ are convex.
\end{proposition}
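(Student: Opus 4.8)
The plan is to realize each set of optimal strategies as the set of optimizers of a convex (respectively concave) function over a convex domain, and then invoke the elementary fact that such optimizer sets are themselves convex.

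First, for the minimizer, define $g \colon \mathbb{R}^n \to \mathbb{R}$ by $g(\mathbf{x}) = \max_{\mathbf{y} \in \Delta_m} \mathbf{x}^T A \mathbf{y}$. For each fixed $\mathbf{y} \in \Delta_m$ the map $\mathbf{x} \mapsto \mathbf{x}^T A \mathbf{y}$ is linear, hence convex; since $\Delta_m$ is compact the supremum is attained, so $g$ is finite-valued; and a pointwise supremum of convex functions is convex, whence $g$ is convex on all of $\mathbb{R}^n$. By the characterization in \Cref{proposition:decoupling}, $\Delta_n^* = \argmin_{\mathbf{x} \in \Delta_n} g(\mathbf{x})$ is the arg-min of a convex function over the convex set $\Delta_n$.

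The key (two-line) step is then: if $\mathbf{u}, \mathbf{v} \in \Delta_n^*$, write $v$ for their common optimal value $g(\mathbf{u}) = g(\mathbf{v})$ and let $\lambda \in [0, 1]$. Then $\lambda \mathbf{u} + (1 - \lambda) \mathbf{v} \in \Delta_n$ by convexity of the simplex, and by convexity of $g$,
\[
g(\lambda \mathbf{u} + (1 - \lambda) \mathbf{v}) \leq \lambda g(\mathbf{u}) + (1 - \lambda) g(\mathbf{v}) = v.
\]
Since $v$ is the minimum of $g$ over $\Delta_n$, this inequality must be an equality, so $\lambda \mathbf{u} + (1 - \lambda) \mathbf{v} \in \Delta_n^*$, proving convexity. (Equivalently, $\Delta_n^*$ is the intersection of $\Delta_n$ with the sublevel set $\{ \mathbf{x} : g(\mathbf{x}) \leq v \}$, and both are convex.)

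Finally, the maximizer is handled symmetrically: set $h(\mathbf{y}) = \min_{\mathbf{x} \in \Delta_n} \mathbf{x}^T A \mathbf{y}$, a pointwise infimum over the compact set $\Delta_n$ of affine (hence concave) functions, therefore concave and finite-valued; then $\Delta_m^* = \argmax_{\mathbf{y} \in \Delta_m} h(\mathbf{y})$ is the arg-max of a concave function over the convex set $\Delta_m$, and the same computation with the inequality reversed shows $\Delta_m^*$ is convex. I do not anticipate a genuine obstacle here — the argument is routine; the only points worth a word of care are that $g$ and $h$ are everywhere finite, which is guaranteed by compactness of the simplexes (so the inner extrema are attained), and that the min-max theorem already ensures these optimizer sets are non-empty, although non-emptiness plays no role in the convexity argument itself.
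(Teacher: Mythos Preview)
Your proof is correct and is essentially the same as the paper's: both show that the convex combination $\lambda \mathbf{x}_1^* + (1-\lambda)\mathbf{x}_2^*$ lies in $\Delta_n^*$ via the inequality $\max_{\mathbf{y}}(\lambda \mathbf{x}_1^* + (1-\lambda)\mathbf{x}_2^*)^T A \mathbf{y} \le \lambda \max_{\mathbf{y}}(\mathbf{x}_1^*)^T A \mathbf{y} + (1-\lambda)\max_{\mathbf{y}}(\mathbf{x}_2^*)^T A \mathbf{y}$, which is precisely your convexity of $g$. The only difference is packaging---you name the function $g$ and invoke ``sup of convex is convex,'' whereas the paper writes out the same one-line bound directly.
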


\begin{proof}
Let $\mathbf{x}_1^*, \mathbf{x}_2^* \in \Delta_n^*$ and some $\lambda \in [0,1]$. It suffices to show that $\lambda \mathbf{x}_1^* + (1-\lambda) \mathbf{x}_2^* \in \Delta_n^*$; the convexity of $\Delta_m^*$ will then follow from an analogous argument. Indeed, we have that

\begin{align*}
    \max_{\mathbf{y} \in \Delta_m} (\lambda \mathbf{x}_1^* + (1-\lambda) \mathbf{x}_2^*)^T A \mathbf{y} &\leq \lambda \max_{\mathbf{y} \in \Delta_m} (\mathbf{x}_1^*)^T A \mathbf{y} + (1 - \lambda) \max_{\mathbf{y} \in \Delta_m} (\mathbf{x}_2^*)^T A \mathbf{y} \\
    &\leq \max_{\mathbf{y} \in \Delta_m} \mathbf{x}^T A \mathbf{y}, \quad \forall \mathbf{x} \in \Delta_n,
\end{align*}
where the last line follows from $\mathbf{x}_1^*, \mathbf{x}_2^* \in \Delta_n^*$; thus, $\lambda \mathbf{x}_1^* + (1 - \lambda) \mathbf{x}_2^* \in \Delta_n^*$.
\end{proof}

While the previous statements hold for any arbitrary game, the following proposition will require two separate hypotheses; after we establish the proof we will illustrate that these assumptions are -- in principle -- generic, in the sense that every game can be reduced with simple operations to the assumed form, without -- essentially -- altering the space of equilibria.

\begin{proposition}
    \label{proposition:NE}
Consider a constrained zero-sum game with an interior equilibrium and value $v = 0$; then, it follows that
\begin{equation}
    \NE = \left( \Delta_n \cap \mathcal{N}(A^T) \right) \times \left( \Delta_m \cap \mathcal{N}(A) \right).
\end{equation}

\end{proposition}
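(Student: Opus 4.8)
The plan is to characterize $\NE$ for a constrained zero-sum game with value $v=0$ and an interior equilibrium $(\tilde{\mathbf{x}}, \tilde{\mathbf{y}})$, by first establishing the "best-response" reformulation and then exploiting interiority. The key structural fact I would prove is: $\mathbf{x}^* \in \Delta_n^*$ if and only if $A^T \mathbf{x}^* = \mathbf{0}$ (and symmetrically $\mathbf{y}^* \in \Delta_m^*$ iff $A \mathbf{y}^* = \mathbf{0}$). Combined with \Cref{proposition:decoupling} this immediately gives $\NE = \Delta_n^* \times \Delta_m^* = (\Delta_n \cap \mathcal{N}(A^T)) \times (\Delta_m \cap \mathcal{N}(A))$, which is the claim.

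For the "if" direction: if $A^T\mathbf{x}^* = \mathbf{0}$, then $(\mathbf{x}^*)^T A \mathbf{y} = 0$ for every $\mathbf{y} \in \Delta_m$, so $\max_{\mathbf{y} \in \Delta_m}(\mathbf{x}^*)^T A \mathbf{y} = 0 = v$, hence $\mathbf{x}^*$ is a minimizer and lies in $\Delta_n^*$. The "only if" direction is the substantive step and is where interiority enters. Suppose $\mathbf{x}^* \in \Delta_n^*$, so $\max_{\mathbf{y} \in \Delta_m}(\mathbf{x}^*)^T A \mathbf{y} = v = 0$; equivalently, letting $\mathbf{c} = A^T \mathbf{x}^* \in \mathbb{R}^m$, we have $\max_{j} c_j \le 0$, i.e.\ $\mathbf{c} \le \mathbf{0}$ componentwise. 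Now use the interior equilibrium: since $(\tilde{\mathbf{x}}, \tilde{\mathbf{y}})$ is a Nash equilibrium with value $0$ and $\mathbf{x}^*$ is an optimal minimizer, the pair $(\mathbf{x}^*, \tilde{\mathbf{y}})$ is also a Nash equilibrium (by \Cref{proposition:decoupling}, $\Delta_n^* \times \Delta_m^*$ is a product), so $\tilde{\mathbf{y}}$ is a best response to $\mathbf{x}^*$; that is, $\tilde{\mathbf{y}}$ maximizes $\mathbf{c}^T \mathbf{y}$ over $\Delta_m$ and attains the value $\mathbf{c}^T\tilde{\mathbf{y}} = 0$. But $\tilde{\mathbf{y}}$ is interior ($\tilde{\mathbf{y}}_j > 0$ for all $j$) and $\mathbf{c} \le \mathbf{0}$, so $0 = \mathbf{c}^T \tilde{\mathbf{y}} = \sum_j c_j \tilde{\mathbf{y}}_j \le 0$ forces $c_j = 0$ for every $j$, i.e.\ $A^T\mathbf{x}^* = \mathbf{0}$, so $\mathbf{x}^* \in \mathcal{N}(A^T)$. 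The argument for $\mathbf{y}^* \in \Delta_m^*$ is symmetric, using the interior equilibrium strategy $\tilde{\mathbf{x}}$ and the fact that $\min_{\mathbf{x} \in \Delta_n} \mathbf{x}^T A \mathbf{y}^* = v = 0$ gives $A\mathbf{y}^* \ge \mathbf{0}$, then pairing with $\tilde{\mathbf{x}}$ collapses it to $A\mathbf{y}^* = \mathbf{0}$.

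The main obstacle — really the only delicate point — is making rigorous that an optimal minimizer $\mathbf{x}^*$ can be paired with the given interior maximizer $\tilde{\mathbf{y}}$ to form a Nash equilibrium; this is exactly the content of \Cref{proposition:decoupling} (the equilibrium set is a Cartesian product $\Delta_n^* \times \Delta_m^*$), which I would cite directly rather than reprove. Everything else is the elementary observation that a nonnegative (resp.\ nonpositive) vector with zero inner product against a strictly positive vector must vanish. I would also note in passing that the reverse containment $(\Delta_n \cap \mathcal{N}(A^T)) \times (\Delta_m \cap \mathcal{N}(A)) \subseteq \NE$ does not even need the interior-equilibrium hypothesis — it is just the "if" direction above — so the hypothesis is used solely to rule out optimal strategies that are not in the null spaces.
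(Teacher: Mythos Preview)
Your argument is correct and follows essentially the same route as the paper: both use \Cref{proposition:decoupling} to pair an arbitrary optimal $\mathbf{x}^*$ with the interior optimal $\tilde{\mathbf{y}}$, observe that $v=0$ forces $A^T\mathbf{x}^*\le \mathbf{0}$ componentwise, and then use interiority of $\tilde{\mathbf{y}}$ (as a best response achieving value $0$) to conclude $A^T\mathbf{x}^*=\mathbf{0}$. The only cosmetic difference is that the paper first verifies the null-space property for the interior equilibrium itself before treating the general case, whereas you go straight to the general $\mathbf{x}^*$; the logic is the same.
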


\paragraph{Reduction}

Finally, we shall address the assumptions made in the last proposition. First, consider a matrix $A'$ that derives from $A$ with the addition of a constant $c$ in every entry; then, for $\mathbf{x} \in \Delta_n$ and $\mathbf{y} \in \Delta_m$, it follows that

\begin{equation}
    \mathbf{x}^T A' \mathbf{y} = \mathbf{x}^T (A + c \mathbf{1}_{n \times m}) \mathbf{y} = \mathbf{x}^T A \mathbf{y} + c.
\end{equation}

Therefore, the addition of $c$ in every entry of the matrix does not alter the space of Nash equilibria; it does, however, change the value of the game by an additive constant $c$. As a result, if we add in every entry of a matrix the value $-v$, the game that arises has a value of zero. As a corollary of \Cref{proposition:NE}, $(\mathbf{x}, \mathbf{y})$ is a Nash equilibrium pair -- in a game where there exists an interior equilibrium -- if and only if $A \mathbf{y} = v \mathbf{1}$ and $A^T \mathbf{x} = v \mathbf{1}$. Furthermore, consider a game that does not possess an interior equilibrium. In particular, let us assume that every $\mathbf{x}^* \in \Delta_n^*$ resides in the boundary of $\Delta_n$. Then, it follows that there exists some action for player $\mathbf{x}$ with zero probability for every $\mathbf{x}^* \in \Delta_n^*$. Indeed, if we posit otherwise \Cref{proposition:convexity} implies the existence of an interior probability vector $\mathbf{x}^* \in \Delta_n^*$, contradicting our initial hypothesis. Thus, we can iteratively remove every row from the matrix that corresponds to such actions, until no such row exists. It is easy to see that this operation does not have an impact on the Nash equilibria of the game, modulo some dimensions in which the player always assigns zero probability mass. Finally, we can apply -- if needed -- a similar process for the column player and obtain a game with an interior equilibrium. In other words, games without interior equilibria are reducible, in the sense that the 'unsound' actions from either the row or the column player can be removed without altering the game under optimal play.

\paragraph{Remark} It is important to point out that this reduction argument is merely a thought experiment, illustrating that the games we consider capture -- at least in some sense -- the complexity of the entire class; yet, it does not provide an appropriate reduction since unknown components -- such as the value of the game -- are used. We leave as an open question whether it is possible to incorporate this reduction in our iterative algorithm.

\section{Alternating Projections}
\label{section:alternating_projections}

Based on the properties we have established in the previous sections, we provide an algorithm that (strongly) converges to the space of Nash equilibria in constrained games. Throughout this section, we will require that the value of the game is zero and that there exists an interior equilibrium (see \Cref{section:necessity} on why these assumptions are necessary); under these hypotheses, we have a strong characterization of the optimal strategies (\Cref{proposition:NE}): $\Delta_n^* = \Delta_n \cap \mathcal{N}(A^T)$ and $\Delta_m^* = \Delta_m \cap \mathcal{N}(A)$. As a result, we can employ an Alternating Projections scheme.

\paragraph{Complexity of Projection} It is important to point out that the complexity of implementing a projection to a non-empty, convex and closed set $C$ depends primarily on the structure of $C$. Indeed, identifying the projection to an arbitrary set constitutes a non-trivial optimization problem of its own. For this reason, we need OGDA to implement the projection to the null space of $A$ and $A^T$. On the other hand, we make the standard assumption that the projection to the probability simplex is computed in a single iteration.

\bigbreak
In this context, our main algorithm constitutes a projected variant of OGDA; specifically, instead of projecting after a single step, we simulate multiple \emph{steps} $T_s$ of the OGDA algorithm, before finally performing the projection. This process will be repeated for $T_p$ \emph{cycles}. We will assume for simplicity that OGDA is initialized with $\mathbf{x}_{-1} = \mathbf{0}$ and $\mathbf{y}_{-1} = \mathbf{0}$, without having an impact on its limit points (\Cref{theorem:projection}). Recall from \Cref{theorem:projection} that OGDA converges to the projection of the initial conditions $\mathbf{x}_0$ and $\mathbf{y}_0$ to the left and right null space of $A$ respectively. Thus, our algorithm essentially performs -- for sufficiently large $T_s$ -- alternate projections in $\mathcal{N}(A^T)$ and $\Delta_n$ in the domain of the $\mathbf{x}$ player, and in $\mathcal{N}(A)$ and $\Delta_m$ in the domain of the $\mathbf{y}$ player. In this sense, it can be viewed as an approximate instance of Alternating Projections. We state the following theorem in order to illustrate the robustness of this method to arbitrary Hilbert spaces.

\begin{algorithm}[h]
\caption{Alternating Projections}
\label{algorithm:projected-OGDA}
\SetKwInput{KwInput}{Input}
\SetAlgoLined
\KwResult{Approximate Nash equilibrium}
 \KwInput{matrix of the game $A$, $(\mathbf{x}_0^p, \mathbf{y}_0^p) \in \Delta_n \times \Delta_m$, learning rate $\eta$}
 \For{$k := 1, 2, \dots, T_p$}{
    $\mathbf{x}_0 := \mathbf{x}_{k-1}^p$\;
    $\mathbf{y}_0 := \mathbf{y}_{k-1}^p$\;
    \For{$t := 1, 2, \dots, T_s$} {
      $\mathbf{x}_t := \mathbf{x}_{t-1} - 2\eta A \mathbf{y}_{t-1} + \eta A \mathbf{y}_{t-2}$\;
      $\mathbf{y}_t := \mathbf{y}_{t-1} + 2\eta A^T \mathbf{x}_{t-1} - \eta A^T \mathbf{x}_{t-2}$\;
  }
  $\mathbf{x}_k^p := \mathcal{P}_{\Delta_n}(\mathbf{x}_{T_s}^{\phantom{p}})$\;
  $\mathbf{y}_k^p := \mathcal{P}_{\Delta_m}(\mathbf{y}_{T_s}^{\phantom{p}})$\;
 }
 \Return $(\mathbf{x}_{T_p}^p, \mathbf{y}_{T_p}^p)$\;
\end{algorithm}

\begin{theorem}
    \label{theorem:alternating_projections}
    Let $M$ and $N$ denote closed and convex subsets of a Hilbert space $\mathcal{H}$ with non-empty intersection and $I \in \mathcal{H}$. If one of the sets is compact \footnote{We refer to \cite{Polak1990MethodOS,10.1007/BF01027691} for more refined conditions}, the method of Alternating Projections converges in norm to a point in the intersection of the sets; that is, $\exists L \in M \cap N$, such that
    \begin{equation}
    \lim_{n \to \infty} || \left( \mathcal{P}_{M} \mathcal{P}_N \right)^n (I) - L || = 0.
    \end{equation}
\end{theorem}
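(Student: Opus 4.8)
The plan is to reduce the convergence of the alternating projections iteration to a standard fixed-point/monotonicity argument, using compactness of one set to extract a convergent subsequence and the \emph{Fej\'er monotonicity} of the sequence with respect to $M \cap N$ to upgrade this to norm convergence of the whole sequence. Concretely, write $T = \mathcal{P}_M \mathcal{P}_N$ and $u_n = T^n(I)$. Since $M$ and $N$ are closed and convex in a Hilbert space, both projections are firmly nonexpansive, hence nonexpansive; thus $T$ is nonexpansive. The first key observation is that for any $p \in M \cap N$ we have $\mathcal{P}_N p = p$ and $\mathcal{P}_M p = p$, so $\|T u_n - p\| = \|\mathcal{P}_M \mathcal{P}_N u_n - \mathcal{P}_M p\| \le \|\mathcal{P}_N u_n - p\| \le \|u_n - p\|$, i.e. the sequence $\{\|u_n - p\|\}$ is nonincreasing and therefore convergent; in particular $\{u_n\}$ is bounded.

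Next I would exploit compactness. Say $N$ is compact (the argument is symmetric otherwise, after noting $u_n \in M$ for $n\ge 1$ and reindexing). Then $\{\mathcal{P}_N u_n\}$ lies in the compact set $N$, so along a subsequence $\mathcal{P}_N u_{n_k} \to w \in N$, and by continuity of $\mathcal{P}_M$ we get $u_{n_k+1} = \mathcal{P}_M \mathcal{P}_N u_{n_k} \to \mathcal{P}_M w =: L \in M$. The crucial step is to show that $L \in M \cap N$, equivalently that the residual $\|\mathcal{P}_N u_n - u_n\|$ (the ``one-step progress'' toward $N$) tends to $0$, and similarly for $M$. This follows from the firm nonexpansiveness of projections: for $p \in M\cap N$, firm nonexpansiveness of $\mathcal{P}_N$ gives $\|\mathcal{P}_N u_n - p\|^2 \le \|u_n - p\|^2 - \|u_n - \mathcal{P}_N u_n\|^2$, and likewise $\|\mathcal{P}_M \mathcal{P}_N u_n - p\|^2 \le \|\mathcal{P}_N u_n - p\|^2 - \|\mathcal{P}_N u_n - \mathcal{P}_M \mathcal{P}_N u_n\|^2$. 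Chaining these, $\|u_{n+1}-p\|^2 \le \|u_n - p\|^2 - \|u_n - \mathcal{P}_N u_n\|^2 - \|\mathcal{P}_N u_n - u_{n+1}\|^2$; since $\|u_n - p\|$ converges, the telescoped sum of the two residual terms is finite, forcing both $\|u_n - \mathcal{P}_N u_n\| \to 0$ and $\|\mathcal{P}_N u_n - u_{n+1}\| \to 0$. Passing to the limit along $n_k$ in $u_{n_k} - \mathcal{P}_N u_{n_k} \to 0$ and $u_{n_k+1} = \mathcal{P}_M \mathcal{P}_N u_{n_k}$ yields $w = \lim u_{n_k} \in N$ and $L = \mathcal{P}_M w$; moreover $\|\mathcal{P}_N u_{n_k} - u_{n_k+1}\|\to 0$ forces $L = w$, so $L \in M \cap N$.

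Finally I would upgrade subsequential convergence to full-sequence norm convergence. Since $L \in M\cap N$, the Fej\'er-monotonicity established in the first step applies with $p = L$: the sequence $\{\|u_n - L\|\}$ is nonincreasing, hence convergent; but it has a subsequence $\|u_{n_k} - L\| \to 0$ (either directly from $u_{n_k}\to L$, or from $u_{n_k+1}\to L$ together with $\|u_{n}-\mathcal P_N u_n\|\to0$), so the limit of the whole sequence is $0$, i.e. $\|u_n - L\| \to 0$. This is exactly the claimed statement $\lim_{n\to\infty} \|(\mathcal{P}_M \mathcal{P}_N)^n(I) - L\| = 0$.

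The main obstacle is the middle step: ruling out that the iteration stalls at a point of $M$ (or $N$) that is not in the intersection. In an arbitrary Hilbert space, compactness of one set is precisely what rescues the argument, since without it one only gets \emph{weak} subsequential limits and must then argue that the weak limit still lies in both sets and that the one-step residuals vanish — doable via demiclosedness of $I - \mathcal{P}_N$, but more delicate. With the compactness hypothesis in hand, the strong convergence of $\{\mathcal{P}_N u_n\}$ along a subsequence makes the limit identification routine; the remaining care is purely bookkeeping of which iterates lie in which set and taking limits along the correct subsequence.
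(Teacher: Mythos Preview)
Your argument is correct and is essentially the standard Fej\'er-monotonicity proof for alternating projections between closed convex sets: nonexpansiveness gives monotone decrease of $\|u_n-p\|$ for $p\in M\cap N$; firm nonexpansiveness yields the telescoping estimate forcing both one-step residuals to vanish; compactness of one set furnishes a strongly convergent subsequence whose limit must then lie in $M\cap N$; and Fej\'er monotonicity upgrades subsequential to full-sequence convergence. The bookkeeping about which iterates lie in which set and the symmetry remark are handled adequately.

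As for the comparison: the paper does \emph{not} give its own proof of this theorem. It is stated as a known result from the alternating-projections literature (with references in the footnote and in the ``Alternating Projections'' paragraph of the introduction) and is used as a black box to deduce Corollary~\ref{corollary:convergence}. So there is no paper proof to compare against; you have supplied what the paper simply cites. Your route via firm nonexpansiveness and Fej\'er monotonicity is precisely the classical one, and in fact the paper reuses the same firm-nonexpansiveness inequality you invoke (in the proof of Theorem~\ref{theorem:fast_convergence}) when establishing the linear rate, so your approach is fully in the spirit of the surrounding arguments.
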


This theorem directly implies the convergence of Algorithm \ref{algorithm:projected-OGDA}. Indeed, first note that a probability simplex is compact and convex. Moreover, the null space of any matrix is a vector (or linear) space of finite dimension and thus, it is trivially a convex and closed set. Finally, under our assumptions, $\Delta_n \cap \mathcal{N}(A^T) \neq \emptyset $ and $\Delta_m \cap \mathcal{N}(A) \neq \emptyset$; as a result, the following corollary follows from \Cref{theorem:projection}, \Cref{proposition:NE} and \Cref{theorem:alternating_projections}.

\begin{corollary}
    \label{corollary:convergence}
    Consider a constrained zero-sum game with an interior equilibrium and value $v = 0$; if $\eta < 1/(2\gamma)$, then for $T_s, T_p \to \infty$, Algorithm \ref{algorithm:projected-OGDA} converges in norm to a Nash equilibrium.
\end{corollary}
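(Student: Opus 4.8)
The plan is to assemble \Cref{corollary:convergence} directly from the three ingredients the excerpt has already isolated, treating it essentially as a bookkeeping argument that checks the hypotheses of \Cref{theorem:alternating_projections} and then identifies what the resulting limit point is. First I would fix the two convex closed sets: set $M = \mathcal{N}(A^T)$ and $N = \Delta_n$ in the domain of the $\mathbf{x}$-player (and symmetrically $M' = \mathcal{N}(A)$, $N' = \Delta_m$ for the $\mathbf{y}$-player), observe that each is convex and closed — the null space because it is a finite-dimensional linear subspace, the simplex because it is an intersection of halfspaces with a hyperplane — and that the simplex is moreover compact by Heine–Borel. Under the standing assumptions (value $v=0$, interior equilibrium), \Cref{proposition:NE} gives $\Delta_n^* = \Delta_n \cap \mathcal{N}(A^T) \neq \emptyset$ and $\Delta_m^* = \Delta_m \cap \mathcal{N}(A) \neq \emptyset$, so the intersections are non-empty; hence \Cref{theorem:alternating_projections} applies and the idealized iteration $(\mathcal{P}_{\Delta_n}\mathcal{P}_{\mathcal{N}(A^T)})^n$ converges in norm to some point $\mathbf{x}^* \in \Delta_n \cap \mathcal{N}(A^T) = \Delta_n^*$, and likewise for $\mathbf{y}$.

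Next I would justify why Algorithm~\ref{algorithm:projected-OGDA} realizes exactly this idealized iteration in the limit $T_s \to \infty$. The inner loop runs $T_s$ steps of the OGDA dynamics \eqref{equation:OGDA-bilinear} with initial conditions $\mathbf{x}_0 = \mathbf{x}_{k-1}^p$, $\mathbf{x}_{-1} = \mathbf{0}$ (and symmetrically for $\mathbf{y}$); by \Cref{theorem:projection}, since $\eta < 1/(2\gamma)$, as $T_s \to \infty$ the iterate $\mathbf{x}_{T_s}$ converges to the orthogonal projection of $\mathbf{x}_{k-1}^p$ onto $\mathcal{N}(A^T)$, i.e. to $\mathcal{P}_{\mathcal{N}(A^T)}(\mathbf{x}_{k-1}^p)$ — and here one uses that the orthogonal projection onto a subspace coincides with Hilbert's metric projection $\mathcal{P}_{\mathcal{N}(A^T)}$. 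The subsequent line $\mathbf{x}_k^p := \mathcal{P}_{\Delta_n}(\mathbf{x}_{T_s})$ then applies the simplex projection. So in the $T_s \to \infty$ limit each cycle of the outer loop computes $\mathbf{x}_k^p = \mathcal{P}_{\Delta_n}\,\mathcal{P}_{\mathcal{N}(A^T)}(\mathbf{x}_{k-1}^p)$, which is precisely one alternating-projection step; iterating over $k = 1,\dots,T_p$ and letting $T_p \to \infty$ yields convergence in norm to $\mathbf{x}^* \in \Delta_n^*$ by the previous paragraph, and the analogous statement holds for the $\mathbf{y}$-coordinate with $\Delta_m$ and $\mathcal{N}(A)$. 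Finally, \Cref{proposition:decoupling} tells us $\NE = \Delta_n^* \times \Delta_m^*$, so the limiting pair $(\mathbf{x}^*, \mathbf{y}^*)$ is a genuine Nash equilibrium of the constrained game, which is the assertion of the corollary.

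The step I expect to require the most care — and the one the statement papers over by taking the double limit $T_s, T_p \to \infty$ rather than a single joint limit — is the interchange of limits: Algorithm~\ref{algorithm:projected-OGDA} is an \emph{approximate} alternating-projection scheme in which, for finite $T_s$, each cycle only approximately projects onto $\mathcal{N}(A^T)$, and these errors could in principle accumulate across the $T_p$ cycles. In the clean form stated here the corollary sidesteps this by sending $T_s \to \infty$ first (so each cycle is an exact projection, by \Cref{theorem:projection}) and only then $T_p \to \infty$; with that ordering the argument is just a composition of the two convergence results above and the identification of limit points, and no error-propagation estimate is needed. (The quantitative trade-off between $T_s$ and $T_p$ for the $\widetilde{\mathcal{O}}(\log^2(1/\epsilon))$ bound of \Cref{theorem:fast_convergence} is where that accumulation must genuinely be controlled, using the linear rate from \Cref{proposition:convergence_rate} together with the linear rate of alternating projections governed by the Friedrichs angle; but that is deferred to the later theorem and is not needed for the qualitative claim here.) I would therefore present the proof of \Cref{corollary:convergence} as: (i) verify closedness/convexity/compactness and non-emptiness of the relevant sets; (ii) invoke \Cref{theorem:alternating_projections} for each player's pair of sets; (iii) use \Cref{theorem:projection} to identify the $T_s \to \infty$ limit of the inner loop with the orthogonal (hence metric) projection onto the null space; (iv) conclude via \Cref{proposition:NE} and \Cref{proposition:decoupling} that the limit is in $\NE$.
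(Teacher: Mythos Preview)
Your proposal is correct and mirrors the paper's own argument almost exactly: the paper likewise just checks that the simplex is compact and convex, the null space is closed and convex, the intersections are non-empty by \Cref{proposition:NE}, and then invokes \Cref{theorem:projection} and \Cref{theorem:alternating_projections} to conclude. Your additional remarks about the order of the double limit and the explicit appeal to \Cref{proposition:decoupling} go slightly beyond what the paper spells out, but they are consistent with (and a bit more careful than) the paper's one-paragraph justification.
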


The final step is to characterize the rate of convergence of our algorithm. In the following proof we use techniques from \cite{10.1007/BF01027691}.

\begin{theorem}
    \label{theorem:fast_convergence}
    Consider a constrained zero-sum game with an interior equilibrium and value $v = 0$, and let $\lambda \in (0,1)$ the rate of convergence of OGDA; if $\eta < 1/(2\gamma)$, then there exists a parameter $\alpha \in (0,1)$ such that Algorithm \ref{algorithm:projected-OGDA} yields an $\epsilon$-approximate Nash equilibrium, for any $\epsilon > 0$, with $T_p \in \mathcal{O}\left( \frac{\log (1/\epsilon)}{\log (1/\alpha)} \right)$ cycles and $T_s \in \left( \frac{\log(1/((1-\alpha)\epsilon))}{\log (1/\lambda)} \right)$ steps.
\end{theorem}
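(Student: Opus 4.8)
The plan is to combine the linear convergence rate $\lambda$ of OGDA (Corollary in Subsection~\ref{subsection:convergence}) with the linear convergence of exact Alternating Projections between the two closed convex sets $\Delta_n \cap \mathcal{N}(A^T)$ and $\Delta_n$ (resp.\ $\Delta_m \cap \mathcal{N}(A)$ and $\Delta_m$), the latter governed by a rate $\alpha \in (0,1)$ that depends on the Friedrichs-type angle between the sets; the existence of such an $\alpha$ is exactly what the cited results of \cite{10.1007/BF01027691} provide in the compact/convex setting of \Cref{theorem:alternating_projections}. The key point is that each \emph{cycle} of Algorithm~\ref{algorithm:projected-OGDA} is not an exact projection onto $\mathcal{N}(A^T)$ (resp.\ $\mathcal{N}(A)$) but only a $T_s$-step OGDA approximation of it; by \Cref{theorem:projection} the true limit is the orthogonal projection, and by the linear rate the error after $T_s$ steps is at most $C\lambda^{T_s}$ for a constant $C$ depending on the diameter of $\Delta_n \times \Delta_m$ (which, after the normalization making $f$ be $1$-Lipschitz, is an absolute constant). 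So each cycle is an $(\alpha,\delta)$-approximate projection step with per-step additive error $\delta \le C\lambda^{T_s}$.

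The main steps, in order, are: (i) Fix a cycle index $k$ and write the exact-AP iterate as $u_k^\star = (\mathcal{P}_{\Delta_n}\mathcal{P}_{\mathcal{N}(A^T)})^k(u_0)$, which by the linear-rate theorem satisfies $\|u_k^\star - u^\star\| \le \alpha^k \|u_0 - u^\star\|$ where $u^\star$ is the Nash point it converges to. (ii) Let $u_k^p = (\mathbf{x}_k^p,\mathbf{y}_k^p)$ denote the actual algorithm iterate; using that simplex projection $\mathcal{P}_{\Delta_n}$ is nonexpansive and that the inner OGDA loop produces a point within $\delta$ of $\mathcal{P}_{\mathcal{N}(A^T)}(\mathbf{x}_{k-1}^p)$, derive the one-step perturbation bound $\|u_k^p - u_k^\star\|$ accumulating as a geometric series: $\|u_{T_p}^p - u_{T_p}^\star\| \le \sum_{j=0}^{T_p-1}\alpha^j \delta \le \frac{\delta}{1-\alpha}$. (iii) Combine via triangle inequality: $\|u_{T_p}^p - u^\star\| \le \alpha^{T_p}\|u_0-u^\star\| + \frac{\delta}{1-\alpha} \le \alpha^{T_p}D + \frac{C\lambda^{T_s}}{1-\alpha}$ for the diameter $D$. (iv) Set each of the two terms to $\epsilon/2$: the first forces $T_p \in \mathcal{O}(\log(1/\epsilon)/\log(1/\alpha))$; the second forces $\lambda^{T_s} \le (1-\alpha)\epsilon/(2C)$, i.e.\ $T_s \in \mathcal{O}(\log(1/((1-\alpha)\epsilon))/\log(1/\lambda))$. (v) Finally, translate the bound $\|u_{T_p}^p - u^\star\| \le \epsilon$ in strategy space into an $\mathcal{O}(\epsilon)$-approximate Nash guarantee, using that $f$ is $1$-Lipschitz so that a point $\epsilon$-close to an exact equilibrium $(\mathbf{x}^*,\mathbf{y}^*)$ satisfies \eqref{equation:definition-NE} up to $\mathcal{O}(\epsilon)$ additive error (rescale $\epsilon$ by the resulting constant at the outset).

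I expect the main obstacle to be step (ii): rigorously controlling how the per-cycle OGDA approximation error propagates through the exact-AP contraction. The subtlety is that the error $\delta$ at cycle $k$ depends on $\|\mathbf{x}_{k-1}^p\|$ (the constant $C$ in $C\lambda^{T_s}$ scales with the norm of the OGDA initialization), and one must verify these norms stay uniformly bounded — which they do, since after every cycle the iterate lies in $\Delta_n \times \Delta_m$ and the inner OGDA loop starting from a point of bounded norm stays within a bounded region for the fixed horizon $T_s$ (indeed, by \Cref{theorem:convergence} the OGDA trajectory is bounded with a bound controlled by the initial state). Making this uniform boundedness precise, and confirming that the approximate-projection errors do not compound multiplicatively but only additively with the geometric weights $\alpha^j$ (a standard perturbed-fixed-point argument, but one that must be stated carefully because the two coordinate blocks $\mathbf{x}$ and $\mathbf{y}$ evolve through independent alternating projections and must both be absorbed into a single Hilbert-space iteration on $\mathbb{R}^n \times \mathbb{R}^m$), is the crux. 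Everything else is triangle inequalities, geometric-series summation, and taking logarithms.
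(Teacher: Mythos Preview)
Your overall plan---combine the linear OGDA rate $\lambda$ with a linear alternating-projections rate $\alpha$, control the per-cycle approximation error, sum a geometric series, and finish via Lipschitz continuity---is correct and matches the paper's strategy. However, step~(ii) as you have written it does not go through, and the paper handles this point differently.

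You propose to track $\|u_k^p - u_k^\star\|$, where $u_k^\star$ is the \emph{exact} alternating-projections iterate, and claim this accumulates as $\sum_{j}\alpha^j\delta$. That would require the map $T = \mathcal{P}_{\Delta_m}\mathcal{P}_{\mathcal{N}(A)}$ to be $\alpha$-Lipschitz, but it is only $1$-Lipschitz (a composition of nonexpansive projections). The rate $\alpha$ in alternating projections between convex sets is a contraction rate for the \emph{distance to the intersection}, not a global Lipschitz constant of $T$. With only nonexpansiveness you get $\|u_k^p - u_k^\star\| \le \|u_{k-1}^p - u_{k-1}^\star\| + \delta$, hence $T_p\delta$ rather than $\delta/(1-\alpha)$; this still yields the asymptotic iteration count but not the stated dependence on $\alpha$ in $T_s$.

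The paper sidesteps this by never comparing to $u_k^\star$. Instead it works directly with $d(\mathbf{y}_i^p,\Delta_m^*)$: it first establishes a regularity constant $\kappa\ge 1$ with $d(I,\Delta_m^*)\le \kappa\, d(I,\mathcal{N}(A))$ for all $I\in\Delta_m$, then uses \emph{firm} nonexpansiveness of projections to show that one exact AP step satisfies $d(I',\Delta_m^*)\le \alpha\, d(I,\Delta_m^*)$ with $\alpha=\sqrt{1-1/\kappa^2}$. Since this is a contraction on the distance-to-set functional (not on point pairs), the OGDA error $\delta$ enters additively and the recursion $d(\mathbf{y}_{i+1}^p,\Delta_m^*)\le \alpha\, d(\mathbf{y}_i^p,\Delta_m^*)+\delta$ immediately gives the $\delta/(1-\alpha)$ bound you wanted. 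So the fix to your argument is to replace $\|u_k^p-u_k^\star\|$ by $d(u_k^p,\Delta_m^*)$ throughout; with that change your steps (iii)--(v) are exactly what the paper does. Note also that the paper derives $\alpha$ explicitly from $\kappa$ and firm nonexpansiveness rather than citing a Friedrichs-angle result, which makes the argument self-contained in the non-affine (simplex) setting.
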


\begin{proof}

First, we analyze the dynamics in the domain of player $\mathbf{y}$. Let $\mathbf{y}_i^p \equiv I \in \Delta_m$ the strategy of player $\mathbf{y}$ in some cycle $i$ of the algorithm, $P = \mathcal{P}_{\mathcal{N}(A)}(I)$ and $I' = \mathcal{P}_{\Delta_m}\mathcal{P}_{\mathcal{N}(A)} (I) = \mathcal{P}_{\Delta_m}(P)$. It is easy to see that $\exists \kappa \geq 1$, such that for any $I \in \Delta_m, d(I, \Delta_m^*) \leq \kappa d(I, \mathcal{N}(A))$. Indeed, if we assume otherwise it follows that there exists an accumulation point $I$ of $\mathcal{N}(A)$ with $I \notin \mathcal{N}(A)$, a contradiction given that $\mathcal{N}(A)$ is a closed set. Fix any arbitrary $L \in \Delta_m^*$; since projection is firmly non-expansive it follows that

\begin{align*}
    d^2(I, \mathcal{N}(A)) &= || I - P ||^2 = || (I - L) - (\mathcal{P}_{\mathcal{N}(A)} (I) - \mathcal{P}_{\mathcal{N}(A)} (L)) ||^2 \\
    &\leq || I - L ||^2 - || \mathcal{P}_{\mathcal{N}(A)} (I) - \mathcal{P}_{\mathcal{N}(A)} (L) ||^2 \\
    &\leq || I - L ||^2 - || \mathcal{P}_{\Delta_m} \mathcal{P}_{\mathcal{N}(A)} (I) - \mathcal{P}_{\Delta_m} \mathcal{P}_{\mathcal{N}(A)} (L) ||^2 \\
    &= || I - L ||^2 - || I' - L||^2.
\end{align*}

\noindent
In particular, if we let $L = \mathcal{P}_{\Delta_m^*}(I)$, it follows that

\begin{align*}
    \frac{1}{\kappa^2} d^2(I, \Delta_m^*) &\leq || I - \mathcal{P}_{\Delta_m^*}(I) ||^2 - || I' - \mathcal{P}_{\Delta_m^*}(I)||^2 \\
    &\leq d^2(I, \Delta_m^*) - d^2(I', \Delta_m^*).
\end{align*}

\noindent
As a result, we have established the following bound:

\begin{equation}
    d(I', \Delta_m^*) \leq \sqrt{1 - \frac{1}{\kappa^2}} d(I, \Delta_m^*).
\end{equation}

Let $\alpha = \sqrt{1 - 1/\kappa^2} \in [0,1)$. The case of $\alpha = 0$ is trivial and hence, we will assume that $\alpha > 0$. Since OGDA exhibits linear convergence with rate $\lambda$, it follows that we can reach within $\delta$ distance from $P$ with $T_s \in \mathcal{O}(\log \delta/\log \lambda)$ number of steps.  Given that projection is a contraction, we obtain that $\mathbf{y}_{i+1}^p$ is within $\delta$ distance from $I'$; thus, applying the triangle inequality yields that for every cycle $i$

\begin{equation}
    d(\mathbf{y}_{i+1}^p, \Delta_m^*) \leq \alpha d(\mathbf{y}_i^p, \Delta_m^*) + \delta.
\end{equation}

\noindent
Therefore, if we perform $T_p$ cycles of the algorithm, we have that

\begin{equation}
    d(\mathbf{y}_{T_p}^p, \Delta_m^*) \leq \alpha^{T_p} d(\mathbf{y}_0^p, \Delta_m^*) + \frac{\delta}{1 - \alpha}.
\end{equation}

As a result, if $T_p \in \mathcal{O}\left( \frac{\log (1/\epsilon)}{\log (1/\alpha)} \right) $ and $T_s \in \left( \frac{\log(1/((1-\alpha)\delta))}{\log (1/\lambda)} \right)$, we obtain a probability vector $\mathbf{y}^p_{T_p}$ that resides within $\epsilon + \delta$ distance from the set of optimal strategies $\Delta_m^*$. By symmetry, $\exists \alpha' \in (0,1)$ such that after an analogous number of cycles and steps the probability vector of player $\mathbf{x}^p$ will also reside within $\epsilon + \delta$ distance from her set of optimal strategies $\Delta_n^*$. Finally, the theorem follows for $a := \max\{\alpha, \alpha' \}$ by the Lipschitz continuity of the objective function of the game.
\end{proof}

As a consequence, we can reach an $\epsilon$-approximate Nash equilibrium with $\widetilde{\mathcal{O}}(\log^2(1/\epsilon))$ iterations of Algorithm \ref{algorithm:projected-OGDA}. We remark that the crucial parameter that determines the rate of convergence in the Alternating Projections procedure is the angle between the subspaces, in the form of parameter $\alpha$ - as introduced in the previous proof. Note that this parameter can be arbitrarily close to $1$, for sufficiently high dimensions and with equilibria sufficiently close to the boundary of the simplex. We refer to \Cref{section:examples} for an additional discussion and simple examples of our algorithm.

\section{Concluding Remarks}
\label{section:concluding_remarks}

The main contributions of this paper are twofold. First, we strongly supplement our knowledge on the behavior of Optimistic Gradient Descent/Ascent in bilinear games, providing a tighter and more precise characterization. Our analysis raises several questions that need to be addressed in future research and indeed, we believe that a deeper understanding of the optimistic term and its stabilizing role in the dynamics remains -- to a large extent -- unresolved. Moreover, we proposed a variant of projected OGDA that simulates an Alternating Projections procedure and we showed a surprising near-linear convergence guarantee. Our main algorithmic result applies for games with interior equilibria and value $v = 0$. In this context, an interesting avenue for future research would be to investigate whether our algorithm can be generalized for any arbitrary game; in other words, can we somehow incorporate the simple operations of our reduction in the iterative algorithm?

\bibliography{refs.bib}

\appendix

\section{Examples on Algorithm \ref{algorithm:projected-OGDA}}
\label{section:examples}

In this section, we provide simple examples in order to illustrate the behavior of our main algorithm. In particular, we first consider the matching pennies games, defined with the following matrix:

\begin{equation}
    A = \begin{pmatrix}
    1 & -1 \\
    -1 & 1
    \end{pmatrix}.
\end{equation}

It is easy to see that this game has a unique equilibrium point, in which both players are selecting their actions uniformly at random. Note that in this case, the null space of $A$ (and similarly for $A^T$) is a one-dimensional vector space -- i.e. a line -- which is perpendicular to the probability simplex $\Delta_m$ (\Cref{fig:rotation}). Thus, the dynamics converge in a single cycle of our algorithm, as implied by \Cref{theorem:projection}. This property holds more broadly in every game with a unique and uniform equilibrium. Indeed, the line of OGDA attractors in each domain will be perpendicular to the corresponding probability simplex. On the other hand, let us consider a rotated version of matching pennies:
\begin{equation}
    A_{rot} = \begin{pmatrix}
    3 & -9 \\
    -1 & 3
    \end{pmatrix}.
\end{equation}

In this case, it is easy to verify that $\mathbf{x}^* = \left( \frac{1}{4}, \frac{3}{4}\right)$ and $\mathbf{y}^* = \left( \frac{3}{4}, \frac{1}{4} \right)$, and the optimal strategies for each player are unique. Unlike the previous example, the limit points of OGDA are not valid probability vectors and multiple cycles are required to approximate the Nash equilibrium. Note that the speed of convergence can be parameterized by $\alpha$, as introduced in the proof of \Cref{theorem:fast_convergence}. Specifically, in matching pennies -- and every game with a unique and uniform equilibrium -- $\alpha = 0$ ($\iff \kappa = 1$; see proof of \Cref{theorem:fast_convergence}) and indeed, a single cycle suffices to reach the space of equilibria. On the other hand, the rotated version induces $\alpha > 0$, decelerating the convergence. In other words, the value of $\alpha$ determines how fast the Alternating Projections procedure will converge. Intuitively, one should relate $\alpha$ to the angle between $\Delta_m$ and $\mathcal{N}(A)$ -- in the domain of player $\mathbf{y}$. Naturally, this parameter is -- in general -- different for the domain of each player and thus, the rate of convergence for the players can be asymmetrical.

\begin{figure}[!ht]
    \centering
    \includegraphics[scale=0.55]{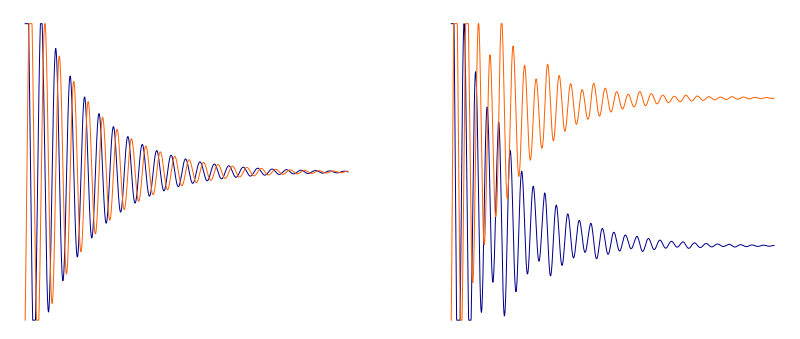}
    \caption{The trajectories of Algorithm \ref{algorithm:projected-OGDA} for matching pennies (leftmost image) and the rotated version of matching pennies (rightmost image). The blue color corresponds to the first coordinate of $\mathcal{P}_{\Delta_n}(\mathbf{x}_t)$, while the orange to the first coordinate of $\mathcal{P}_{\Delta_m}(\mathbf{y}_t)$. For every instance, the dynamics converge to the unique Nash equilibrium of the game.}
    \label{fig:projected-dynamics}
\end{figure}

\begin{figure}[!ht]
    \centering
    \includegraphics[scale=0.4]{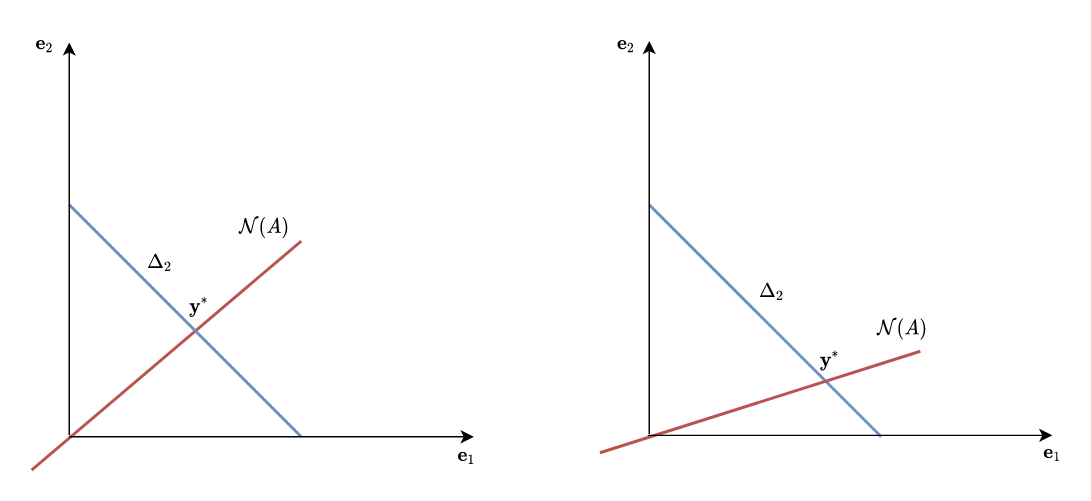}
    \caption{The corresponding geometry from \Cref{fig:projected-dynamics} in the domain of player $\mathbf{y}$. In the matching pennies game the null space $\mathcal{N}(A)$ is perpendicular to the probability simplex $\Delta_2$ (leftmost image), while rotating the matrix results in a spin of $\mathcal{N}(A)$ (rightmost image). Recall from \Cref{proposition:NE} that -- for the games we consider -- the subspace $\mathcal{N}(A)$ (and $\mathcal{N}(A^T)$) always intersects the corresponding probability simplex.}
    \label{fig:rotation}
\end{figure}

\section{Discontinuity on the Rate of Convergence of OGDA}
\label{section:discontinuity}

In this section, we elaborate on a surprising consequence of \Cref{proposition:convergence_rate} that relates to a discontinuity on the behavior of OGDA. More precisely, consider a (unconstrained) game described with the following matrix:

\begin{equation}
    A = \begin{pmatrix}
    1 & -1 \\
    -1 & 1
    \end{pmatrix}.
\end{equation}

Matrix $A A^T = A^2$ has eigenvalues at $\lambda_1 = 0$ and $\lambda_2 = 4$ and hence, as implied by \Cref{proposition:convergence_rate} the rate of convergence is $e(4\eta^2 \lambda_2)$. However, consider a perturbed matrix $A_{\epsilon}$ that derives from $A$ by the addition of a small $\epsilon > 0$ in each entry. In this case, the rate of convergence is determined by $\lambda_1' = 4 \epsilon^2$ and subsequently, it can be arbitrarily close to $1$, leading to a dramatic discontinuity in the behavior of OGDA, as illustrated in the \Cref{fig:discontinuity}. Perhaps even more remarkably, the smaller the magnitude of the noise $\epsilon$, the larger the discrepancy in the rate of convergence.

\begin{figure}[!ht]
    \centering
    \includegraphics[scale=0.52]{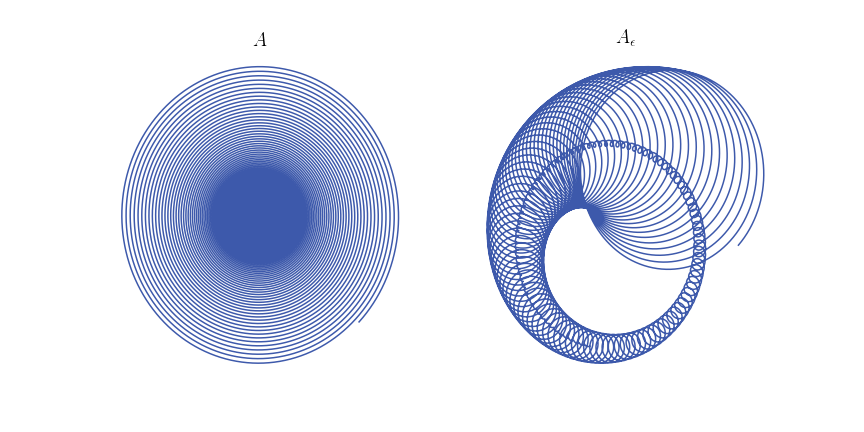}
    \caption{The dynamics of OGDA (in the first coordinate) for the same number of iterations when $\eta = 0.005$ and $\epsilon = 0.01$. Despite the presence of very limited noise, the systems exhibit a strikingly different behavior. }
    \label{fig:discontinuity}
\end{figure}

\section{Necessity of the Assumptions}
\label{section:necessity}

Our Alternating Projections algorithm (Algorithm \ref{algorithm:projected-OGDA}) requires two separate hypotheses, namely that the value of the game is zero and that there exists an interior equilibrium. It is the purpose of this section to show that both of these assumptions are also necessary. In particular, first consider the following matrix:

\begin{equation}
    A_1 = \begin{pmatrix}
    1 & 0 \\
    0 & 1
    \end{pmatrix}.
\end{equation}

It is clear that the induced game has a unique and interior equilibrium. However, since $A_1$ is square and non-singular, it follows that $\mathcal{N}(A) \cap \Delta_m = \emptyset$ and $\mathcal{N}(A^T) \cap \Delta_n = \emptyset$ and thus, the Alternating Projections algorithm will not converge. Similarly, consider the following trivial game:

\begin{equation}
    A_2 = \begin{pmatrix}
    0 & 0 \\
    1 & 1
    \end{pmatrix}.
\end{equation}

In this case, the minimizer or row player will completely disregard the second row and thus, the value of the game will be zero. However, it again follows that $\mathcal{N}(A) \cap \Delta_m = \emptyset$ and $\mathcal{N}(A^T) \cap \Delta_n = \emptyset$. Note that according to our reduction, we simply subtract $-1/2$ from each entry of $A_1$ and we remove the second row from $A_2$; after these simple operations we are able to apply our algorithm.

\section{Oscillations in OGDA}
\label{section:oscillations}

Despite the convergence guarantee of the OGDA dynamics, the trajectories of the players' strategies feature oscillations, albeit with an ever decreasing amplitude; however, our derived solution in \Cref{equation:solution_Q} does not provide an immediate justification for this phenomenon. For this reason, we shall reformulate the solution of the dynamics in a way that will explain the oscillations that occur. In particular, note that the term $Q_t$ \eqref{equation:convolution_Q} can be seen as a weighted convolution (see \Cref{section:binomial} for an asymptotic analysis of the weights) between matrices $B$ and $C$. In this section, we characterize the individual terms in the summation and then, $Q_t$ will be understood as the superposition of these atomic expressions. First, we can prove by induction the following lemma:

\begin{lemma}
\label{lemma:B_power}
Let us introduce the following notation:

\begin{equation}
    M_k(S) = \sum_{i=0}^{\lfloor \frac{k}{2} \rfloor} \binom{k}{2i} (-4 \eta^2 S S^T)^i,
\end{equation}

\begin{equation}
    N_k(S) = \sum_{i=0}^{\lfloor \frac{k-1}{2} \rfloor} \binom{k}{2i + 1} (-4 \eta^2 S S^T)^i.
\end{equation}
Then, for every $k \geq 1$,

\begin{equation}
    \label{equation:B_power}
    B^k
    =
    \begin{bdmatrix}
    M_k(A) & -2\eta A N_k(A^T) \\
    2\eta A^T N_k(A) & M_k(A^T)
    \end{bdmatrix}.
\end{equation}

\end{lemma}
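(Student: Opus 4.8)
The plan is to prove \Cref{lemma:B_power} by induction on $k$, using the block structure of $B$ and keeping careful track of how the scalar matrices $M_k, N_k$ evolve. First I would check the base case $k = 1$: here $M_1(S) = \binom{1}{0}(-4\eta^2 SS^T)^0 = \mathbf{I}$ and $N_1(S) = \binom{1}{1}(-4\eta^2 SS^T)^0 = \mathbf{I}$, so the claimed formula reads
\begin{equation}
    B^1 = \begin{bdmatrix} \mathbf{I}_n & -2\eta A \\ 2\eta A^T & \mathbf{I}_m \end{bdmatrix},
\end{equation}
which is exactly the definition of $B$. (One may also verify $k=2$ as a sanity check, since the $\lfloor k/2 \rfloor$ truncations only start to bite there.)

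For the inductive step, assume \eqref{equation:B_power} holds for some $k \geq 1$ and compute $B^{k+1} = B \cdot B^k$ by block multiplication. This produces four blocks; for instance the top-left block is $M_k(A) - 2\eta A \cdot 2\eta A^T N_k(A) = M_k(A) - 4\eta^2 AA^T N_k(A)$, and I would need to show this equals $M_{k+1}(A)$. The off-diagonal top-right block is $-2\eta A N_k(A^T) \cdot \text{(from $M_k(A^T)$ contribution)}$ plus a term from $\mathbf{I}_n \cdot (-2\eta A N_k(A^T))$... more precisely, multiplying out gives $-2\eta A N_k(A^T) M_k(A^T)$-type terms; the key identity to extract here is $A N_k(A^T) = N_k(A) A$ type commutation, i.e. that $A(A^TA)^i = (AA^T)^i A$, which lets one pull $A$ through to match the claimed form $-2\eta A N_{k+1}(A^T)$. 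So the two things I would verify are the recurrences
\begin{equation}
    M_{k+1}(S) = M_k(S) - 4\eta^2 SS^T N_k(S), \qquad N_{k+1}(S) = M_k(S) + N_k(S),
\end{equation}
wait — I should double check the signs and which block feeds which; the correct pair, read off from $B\cdot B^k$, will be of the form $M_{k+1} = M_k - 4\eta^2 SS^T N_k$ and $N_{k+1} = N_k + M_k$ (up to the precise bookkeeping of the $2\eta A$ factors and the transpose, which I would confirm block by block). Each of these two scalar-matrix recurrences is then a routine binomial (Pascal's rule) verification: writing $u := -4\eta^2 SS^T$, one has $M_k(S) = \sum_i \binom{k}{2i} u^i$ and $N_k(S) = \sum_i \binom{k}{2i+1} u^i$, and $\binom{k+1}{2i} = \binom{k}{2i} + \binom{k}{2i-1}$ gives $M_{k+1} = M_k + u N_k$ while $\binom{k+1}{2i+1} = \binom{k}{2i+1} + \binom{k}{2i}$ gives $N_{k+1} = N_k + M_k$, matching what the block product demands.

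The main obstacle is purely organizational rather than deep: one has to be scrupulous about (i) which entries of $B^k$ carry $A$ versus $A^T$ and on which side, (ii) the commutation identity $A(A^TA)^i = (AA^T)^iA$ (equivalently $A^T(AA^T)^i = (A^TA)^iA^T$), which is what makes the off-diagonal blocks collapse to the stated form with $A$ factored out front, and (iii) matching the floor bounds $\lfloor k/2 \rfloor$, $\lfloor(k-1)/2\rfloor$ in the sums against the shifted bounds for $k+1$ — here the convention that $\binom{k}{2i} = 0$ once $2i > k$ makes the index ranges automatically consistent, so no separate casework on parity of $k$ is needed. Once the block product is carried out and the two Pascal-rule identities are invoked, the induction closes and \eqref{equation:B_power} follows for all $k \geq 1$.
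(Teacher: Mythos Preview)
Your proposal is correct and follows essentially the same approach as the paper: induction on $k$, block multiplication of $B\cdot B^k$, and Pascal's rule to verify the recurrences $M_{k+1}=M_k+uN_k$ and $N_{k+1}=M_k+N_k$ with $u=-4\eta^2 SS^T$. The only notable difference is that the paper carries out an explicit case split on the parity of $k$ when matching the floor bounds in the sums, whereas you observe (correctly) that the convention $\binom{k}{j}=0$ for $j>k$ makes this casework unnecessary; your version is therefore a little cleaner, but the argument is the same.
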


We refer to \Cref{subsection:proof_8} for the proof. Importantly, $B^k$ is inherently characterized by oscillatory behavior, as stated in the following proposition.

\begin{proposition}
\label{proposition:oscillations}
For any $\eta < 1/(2\gamma)$, it follows that

\begin{equation}
    M_k(A) = (\mathbf{I}_n + 4\eta^2 A A^T)^{k/2} \cos(k \Omega),
\end{equation}

\begin{equation}
    2 \eta (A A^T)^{1/2} N_k(A) = (\mathbf{I}_n + 4\eta^2 A A^T)^{k/2} \sin(k \Omega),
\end{equation}
where $\Omega = \tan^{-1}\left( 2\eta (A A^T)^{1/2} \right)$.

\end{proposition}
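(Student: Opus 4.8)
The plan is to reduce the matrix identity to a scalar (or rather spectral) statement by diagonalizing $AA^T$, and then recognize the resulting scalar sums as the real and imaginary parts of a complex binomial expansion. First I would write the spectral decomposition $AA^T = U\Lambda U^T$ with $\Lambda = \mathrm{diag}(\sigma_1^2,\dots,\sigma_n^2)$ and $U$ orthogonal; since every function appearing in the statement — $M_k(A)$, $N_k(A)$, $(\mathbf{I}_n + 4\eta^2 AA^T)^{k/2}$, $\cos(k\Omega)$, $\sin(k\Omega)$, and $(AA^T)^{1/2}$ — is a power series (or well-defined matrix function, using \Cref{lemma:positive_definite} to guarantee $\mathbf{I}_n - 4\eta^2 AA^T \succ 0$ and hence that the relevant square roots and arctangent are legitimate) in the single matrix $AA^T$, it suffices to verify both identities eigenvalue-by-eigenvalue. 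Concretely, fixing a singular value $\sigma$ of $A$ and writing $\mu = 2\eta\sigma \in [0,1)$, the claim becomes the pair of scalar identities
\begin{equation}
\sum_{i=0}^{\lfloor k/2 \rfloor} \binom{k}{2i}(-\mu^2)^i = (1+\mu^2)^{k/2}\cos(k\omega), \qquad \mu \sum_{i=0}^{\lfloor (k-1)/2 \rfloor} \binom{k}{2i+1}(-\mu^2)^i = (1+\mu^2)^{k/2}\sin(k\omega),
\end{equation}
where $\omega = \tan^{-1}(\mu)$.

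Next I would prove these two scalar identities simultaneously by expanding $(1 + i\mu)^k$ with the ordinary binomial theorem, where $i$ is the imaginary unit: separating the sum $\sum_{j=0}^k \binom{k}{j} (i\mu)^j$ into even and odd $j$ gives real part $\sum_i \binom{k}{2i}(-1)^i \mu^{2i}$ and imaginary part $\sum_i \binom{k}{2i+1}(-1)^i \mu^{2i+1}$, which are exactly the left-hand sides above. On the other hand, writing $1 + i\mu = \sqrt{1+\mu^2}\,e^{i\omega}$ with $\omega = \tan^{-1}(\mu)$ (valid since $\mu \geq 0$, so $\omega \in [0,\pi/2)$ and the principal branch is the correct one), de Moivre's formula gives $(1+i\mu)^k = (1+\mu^2)^{k/2}(\cos(k\omega) + i\sin(k\omega))$. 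Matching real and imaginary parts yields precisely the two scalar identities. Lifting back through $U$: since $M_k(A) = U\,M_k(\Lambda^{1/2})\,U^T$ and likewise for every term — here I use that $(AA^T)^{1/2}N_k(A) = U\,\Lambda^{1/2}N_k(\Lambda^{1/2})\,U^T$ and that $\cos(k\Omega), \sin(k\Omega)$ are, by the definition $\Omega = \tan^{-1}(2\eta(AA^T)^{1/2})$, the matrix functions applying the scalar $\cos(k\tan^{-1}(\cdot))$, $\sin(k\tan^{-1}(\cdot))$ to the eigenvalues — the matrix identities follow from the scalar ones entrywise in the eigenbasis.

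The only genuinely delicate point is bookkeeping around the case $\sigma = 0$ (equivalently $\mu = 0$): there $\omega = 0$, both sides of the first identity collapse to $1 = 1^{k/2}\cdot 1$ and both sides of the second to $0 = 0$, so nothing breaks, but one should note that $N_k(A)$ itself need not vanish on that eigenspace — it is the product $2\eta(AA^T)^{1/2}N_k(A)$ that does, which is exactly what the proposition asserts, so the statement is correctly normalized. A secondary subtlety is making sure $\Omega$ is well-defined as a matrix function: this needs the eigenvalues of $2\eta(AA^T)^{1/2}$ to lie in the domain of $\tan^{-1}$, which holds for all real arguments, and to lie in $[0,1)$ only matters for the later trigonometric manipulations in \Cref{section:oscillations}, not for this proposition per se; still, invoking $\eta < 1/(2\gamma)$ keeps everything inside $[0,1)$ and matches the hypothesis. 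I expect the main obstacle to be purely expository — stating cleanly that all matrices in sight are simultaneously diagonalizable in the eigenbasis of $AA^T$ and that applying a scalar identity to each eigenvalue proves the matrix identity — rather than any real computation, since the computation is the two-line complex-binomial argument above.
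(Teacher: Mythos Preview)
Your proposal is correct and follows essentially the same approach as the paper: both reduce to the scalar identities by expanding $(1+i\mu)^k$ via the binomial theorem and matching real/imaginary parts against the polar form $(1+\mu^2)^{k/2}e^{ik\omega}$. The only cosmetic difference is that you lift the scalar identity to matrices via the spectral decomposition of $AA^T$, whereas the paper does so by invoking convergence of the Taylor series defining $\tan^{-1}$, $\cos$, and $\sin$ on matrices; these are equivalent justifications for symmetric arguments.
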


Note that trigonometric functions can be defined for matrices through their Taylor expansion. It is clear that the powers of matrix $C$ will be in a pseudo-diagonal block form and hence, each matrix multiplication $B^{t-2i} C^i$ will induce a multi-dimensional, oscillating waveform with a particular frequency pattern in each dimension. In other words, $Q_t$ can be expressed as a weighted combination of sine and cosine terms with a specific frequency spectrum. Thus, the oscillatory behavior of the trajectories is inherent in the dynamics, while at the same time \Cref{theorem:convergence} reassures us that the cumulative interference of these terms will gradually vanish -- for sufficiently small learning rate -- as time progresses.

\section{Omitted Proofs}
\label{section:omitted_proofs}

\subsection{Proof of \texorpdfstring{\Cref{lemma:matrix_power}}{}}
\label{subsection:proof_1}

\begin{proof}
First, let $\chi_R(\lambda) = \det(\lambda \mathbf{I}_2 - R)$ the characteristic polynomial of matrix $R$; it follows that $\chi_R(\lambda) = \lambda^2 - b \lambda - c$. If we apply the Cayley-Hamilton theorem we obtain that

\begin{equation}
    \label{equation:cayley-hamilton}
    R^2 = b R + c \mathbf{I}_2.
\end{equation}
Inductively, it follows that

\begin{equation}
    \label{equation:matrix_power_aux}
    R^k = p_{k}(b, c) R + q_{k}(b, c) \mathbf{I}_2 \qquad k \geq 0,
\end{equation}
for some polynomials $p_k(b, c)$ and $q_k(b, c)$, with $p_0 = 0$ and $q_0 = 1$. As a result, if we multiply \eqref{equation:matrix_power_aux} by matrix $R$ and invoke \Cref{equation:cayley-hamilton}, it follows that

\begin{align}
    R^{k+1} &= p_{k}(b, c) R^2 + q_k(b, c) R \notag \\
            &= p_k(b, c) (b R + c \mathbf{I}_2) + q_k(b, c) R \notag \\
            &= (b p_k(b, c) + q_k(b, c)) R + c p_k(b, c) \mathbf{I}_2.
\end{align}

Moreover, it follows that the last equation is equivalent to the following system of polynomial equivalencies:

\begin{equation}
    \label{equation:polynomial_system}
    \left\{
    \begin{split}
    p_{k+1}(b, c) &= b p_k(b, c) + q_k(b, c) \\
    q_{k+1}(b, c) &= c p_k(b, c)
    \end{split}
    \right\}.
\end{equation}

\noindent
Finally, we can easily decouple $p_k$ from system \eqref{equation:polynomial_system}, obtaining that

\begin{equation}
    \label{equation:polynomial_recursion}
    p_{k+1}(b, c) = b p_k(b, c) + c p_{k-1}(b, c).
\end{equation}

Note that $p_0 = 0$ and $p_1 = 1$. Therefore, we can solve \Cref{equation:polynomial_system} with simple combinatorial arguments; to be more precise, imagine that we commence from a height of $k+1$. The polynomial recursion informs us on the feasible moves we can perform while decreasing the height and traversing towards the base -- height of $1$. It is clear that each path corresponds to a single monomial, while its coefficient is determined by the number of paths that lead to the same monomial. For instance, the term $b^k$ can only appear in the path that consists of unit steps. In general, the solution will comprise of monomials of the form $b^{k-2i} c^i$, where its coefficient coincides with the number of paths that lead to the same term, i.e. the number of ways we can select $i$ junctions -- where we apply a step of size $2$ -- from a total of $k-i$ moves. Thus, we have proven the following:

\begin{equation}
    \label{equation:p_solution}
    p_{k+1}(b, c) = \sum_{i = 0}^{\lfloor \frac{k}{2} \rfloor} \binom{k - i}{i} b^{k-2i}c^{i} \qquad k \geq 0,
\end{equation}

\begin{equation}
    \label{equation:q_solution}
    q_{k+1}(b, c) = \sum_{i=0}^{\lfloor \frac{k-1}{2} \rfloor} \binom{k - i - 1}{i} b^{k - 2i - 1} c^{i+1} \qquad k \geq 1.
\end{equation}

Note that if $b = c = 1$, then the polynomial recursion \eqref{equation:polynomial_recursion} induces the Fibonacci sequence and as a result, if $F_n$ denotes the $n^{\text{th}}$ term of the Fibonacci sequence, we have proven the following well-known identity:

\begin{equation}
    F_{n+1} = \sum_{i=0}^{\lfloor \frac{n}{2} \rfloor} \binom{n - i}{i}.
\end{equation}

An additional observation is that if we substitute the expressions of $p_{k}(b,c)$ and $q_k(b,c)$ in the identity $p_{k+1}(b,c) = b p_k(b,c) + q_k(b,c$, we obtain the following non-trivial formula:

\begin{equation}
    \sum_{i=0}^{\lfloor \frac{k-1}{2} \rfloor} \binom{k-i-1}{i} b^{k-2i} c^i + \sum_{i=0}^{\lfloor \frac{k-2}{2} \rfloor} \binom{k-i-2}{i}b^{k-2i-1}c^{i+1} = \sum_{i=0}^{\lfloor \frac{k}{2} \rfloor} \binom{k-i}{i} b^{k-2i} c^i.
\end{equation}

Finally, the lemma follows if we substitute the expressions of $p_k(b,c)$ \eqref{equation:p_solution} and $q_k(b,c)$ \eqref{equation:q_solution} to \Cref{equation:matrix_power_aux}.

\end{proof}

\subsection{Proof of \texorpdfstring{\Cref{proposition:binomial_variant}}{}}
\label{subsection:proof_2}

\begin{proof}

We consider again the matrix $R \in \mathbb{R}^{2 \times 2}$ defined as

$$
R =
\begin{pmatrix}
b & c \\
1 & 0
\end{pmatrix}.
$$

\noindent
Given that $b^2 + 4c > 0$, matrix $R$ has two real and distinct eigenvalues:

\begin{equation}
    \lambda_{1, 2} = \frac{b \pm \sqrt{b^2 + 4c}}{2}.
\end{equation}

Moreover, if we denote with $\mathbf{u}_1$ and $\mathbf{u}_2$ the eigenvectors that correspond to the eigenvalues $\lambda_1$ and $\lambda_2$ respectively, it follows that

\begin{equation}
    U =
    \begin{pmatrix}
    \mathbf{u}_1 & \mathbf{u}_2
    \end{pmatrix}
    =
    \begin{pmatrix}
    b + \sqrt{b^2 + 4c} & b - \sqrt{b^2 + 4c} \\
    2 & 2
    \end{pmatrix},
\end{equation}

\begin{equation}
    U^{-1} = \frac{1}{4 \sqrt{b^2 + 4c}}
    \begin{pmatrix}
    2 & -b + \sqrt{b^2 + 4c} \\
    -2 & b + \sqrt{b^2 + 4c}
    \end{pmatrix}.
\end{equation}

If $\Lambda$ denotes the diagonal matrix with entries the eigenvalues of matrix $R$, the spectral decomposition yields that

\begin{equation}
    R = U \Lambda U^{-1}.
\end{equation}
As a result, we have that

\begin{equation}
    R^k = \frac{1}{4\sqrt{b^2 + 4c}}
    \begin{pmatrix}
    b + \sqrt{b^2 + 4c} & b - \sqrt{b^2 + 4c} \\
    2 & 2
    \end{pmatrix}
    \begin{pmatrix}
    \lambda_1^k & 0 \\
    0 & \lambda_2^k
    \end{pmatrix}
    \begin{pmatrix}
    2 & -b + \sqrt{b^2 + 4c} \\
    -2 & b + \sqrt{b^2 + 4c}
    \end{pmatrix}.
\end{equation}

Thus, if we perform the matrix multiplications and equate the derived expression with the result in \Cref{lemma:matrix_power}, the claim follows.

\end{proof}

We should also point out an interesting connection between the previous formula and the Fibonacci series; in particular, let $\phi = (1 + \sqrt{5})/2$ denote the golden ratio. If we apply the identity in \Cref{proposition:binomial_variant} for $b = c = 1$, we derive the well-known closed-form solution of the Fibonacci series.

\begin{equation}
    F_{n+1} = \sum_{i=0}^{\lfloor \frac{n}{2} \rfloor} \binom{n-i}{i} = \frac{1}{\sqrt{5}} \left( (\phi)^{n+1} - (-\phi^{-1})^{n+1}  \right).
\end{equation}

\subsection{Proof of \texorpdfstring{\Cref{lemma:positive_definite}}{}}
\label{subsection:proof_3}

\begin{proof}

The claim is tantamount to showing that the matrices $\mathbf{I}_n - 4\eta^2 A A^T$ and $\mathbf{I}_m - 4\eta^2 A^T A$ are both positive definite. We shall prove the claim for the former; the latter admits an analogous proof. Consider a non-zero vector $\mathbf{x} \in \mathbb{R}^n$; then, the quadratic form of the matrix can be written as

\begin{equation}
    \label{equation:quadratic_form}
    \mathbf{x}^T (\mathbf{I}_n - 4\eta^2 A A^T) \mathbf{x} = ||\mathbf{x}||^2 - 4\eta^2 ||A^T\mathbf{x}||^2.
\end{equation}

Moreover, since $\eta < 1/(2\gamma)$, it follows that $4\eta^2 ||A^T \mathbf{x}||^2 \leq 4\eta^2 ||A^T||^2 ||\mathbf{x}||^2 < ||\mathbf{x}||^2$; hence, the quadratic form is always positive and the lemma follows.

\end{proof}

\subsection{Proof of \texorpdfstring{\Cref{lemma:commutative}}{}}
\label{subsection:proof_4}

\begin{proof}
First, we will show that the $\rho(T) \leq 1$. Indeed, note that the eigenvalues of $T$ are the union of the eigenvalues of matrices $\mathbf{I}_n - 4\eta^2 A A^T$ and $\mathbf{I}_m - 4\eta^2 A^T A$. Let $\chi(\lambda)$ denote the characteristic equation of matrix $ 4\eta^2 A A^T$; the roots of $\chi(\lambda)$ are real numbers in $[0, 1)$. In addition,

\begin{equation}
    \det(\lambda \mathbf{I}_n - (\mathbf{I}_n - 4\eta^2 A A^T)) = \chi(1 - \lambda).
\end{equation}

Thus, the last equation implies that the eigenvalues of the matrix $\mathbf{I}_n - 4\eta^2 A A^T$ reside in $(0, 1]$. A similar argument holds for the matrix $\mathbf{I}_m - 4\eta^2 A^T A$ and thus, $\rho(T) \leq 1$. As a result, given that the eigenvalues of $T$ are real numbers, the following power series converges:

\begin{equation}
    T^{1/2} = \mathbf{I}_{n + m} - \sum_{i=1}^{\infty} \left| \binom{1/2}{i} \right| (\mathbf{I}_{n + m} - T)^i.
\end{equation}

Moreover, note that $T$ and $B$ commute since $B$ and $C$ commute. Thus, applying the previous formula completes the proof.
\end{proof}

\subsection{Proof of \texorpdfstring{\Cref{theorem:convergence}}{}}
\label{subsection:proof_5}

The proof of the theorem requires analyzing the spectrum of the matrices that appear in the expression of $Q_t$; first, recall that a real and square matrix $S$ is said to be \emph{normal} if $S S^T = S^T S$. It is easy to see that $T^{1/2}$ is symmetric and hence it will also satisfy the aforementioned property. Moreover, it follows that $B$ is also a normal matrix:

\begin{equation}
    B B^T = \begin{pmatrix}
    \mathbf{I}_n + 4\eta^2 A A^T & \mathbf{0}_{n \times m} \\
    \mathbf{0}_{m \times n} & \mathbf{I}_m + 4\eta^2 A^T A
    \end{pmatrix}
    = B^T B.
\end{equation}

In addition, we know from \Cref{lemma:commutative} that $T^{1/2}$ and $B$ are commutative, that is $T^{1/2} B = B T^{1/2}$. As a result, we know from a fundamental theorem of Linear Algebra that $B$ and $T^{1/2}$ are simultaneously unitarily diagonalizable, that is there exists a unitary matrix $U$, such that the matrices $U^* T^{1/2} U$ and $U^{*} B U$ are diagonal -- where $U^*$ denotes the conjugate transpose of $U$. Let $\mathbf{u}$ a column of matrix $U$ and $\mathbf{u}_1$ and $\mathbf{u}_2$ the sub-vectors of dimension $n$ and $m$ respectively. Then, it follows that $\mathbf{u}$ constitutes an eigenvector of $T^{1/2}$ and $B$. Moreover, it is easy to see that $\mathbf{u}$ will also be an eigenvector for $T$; if we let $\mu$ and $\mu'$ the corresponding eigenvalues for $T$ and $B$ respectively, we have that

\begin{equation}
    \label{equation:eigenvectors_T}
    T \mathbf{u} = \mu \mathbf{u} \iff
    \left\{
    \begin{split}
    4\eta^2 A A^T \mathbf{u}_1 &= (1 - \mu)\mathbf{u}_1 \\
    4\eta^2 A^T A \mathbf{u}_2 &= (1 - \mu)\mathbf{u}_2
    \end{split}
    \right\},
\end{equation}

\bigbreak

\begin{equation}
    \label{equation:eigenvectors_B}
    B \mathbf{u} = \mu' \mathbf{u} \iff
    \left\{
    \begin{split}
    2\eta A \mathbf{u}_2 = (1 - \mu') \mathbf{u}_1 \\
    2\eta A^T \mathbf{u}_1 = (\mu' - 1) \mathbf{u}_2
    \end{split}
    \right\}.
\end{equation}

\noindent
Therefore, if we combine these equations we can deduce that

\begin{equation}
    \left\{
    \begin{split}
    (1 - \mu) \mathbf{u}_1 = - (1 - \mu')^2 \mathbf{u}_1 \\
    (1 - \mu) \mathbf{u}_2 = - (1 - \mu')^2 \mathbf{u}_2
    \end{split}
    \right\}.
\end{equation}

\bigbreak
Thus, since $\mathbf{u} \neq \mathbf{0}$ it follows that $(1 - \mu) = - (1 - \mu')^2$. Moreover, let us denote with $\Lambda$ and $\overline{\Lambda}$ the set of eigenvalues of $4\eta^2 A A^T$ and $4\eta^2 A^T A$ respectively; since, $A A^T$ and $A^T A$ are symmetric positive semi-definite matrices and $\eta < 1/(2\gamma)$, it follows that $\lambda \in [0,1), \forall \lambda \in \Lambda \cup \overline{\Lambda}$. It is easy to verify that the set of eigenvalues of $T$ will be $\{ 1 - \lambda : \lambda \in \Lambda \cup \overline{\Lambda} \}$. As a result, an eigenvalue $\lambda \in \Lambda \cup \overline{\Lambda}$ will induce an eigenvalue $\mu'$ in matrix $B$, such that $-(1 - \mu')^2 = \lambda \iff \mu' = 1 \pm \sqrt{\lambda} j$. Note that the double multiplicity is justified by the fact that the subset of non-zero eigenvalues of $\Lambda$ coincides with the subset of non-zero eigenvalues of $\overline{\Lambda}$. As a result, if $\lambda_1, \lambda_2, \dots, \lambda_r$ the non-zero eigenvalues of $\Lambda$ -- or equivalently $\overline{\Lambda}$ -- we have proven that

\begin{equation}
    U^* T^{1/2} U = \text{diagonal}(\overbrace{\sqrt{1 - \lambda_1}, \sqrt{1 - \lambda_1}, \dots, \sqrt{1 - \lambda_r}, \sqrt{1 - \lambda_r}, 1, \dots, 1}^{n+m \text{ eigenvalues}}),
\end{equation}

\begin{equation}
    U^* B U = \text{diagonal}(\overbrace{1 + \sqrt{\lambda_1} j, 1 - \sqrt{\lambda_1} j, \dots, 1 + \sqrt{\lambda_r} j, 1 - \sqrt{\lambda_r} j, 1, \dots,  1}^{n+m \text{ eigenvalues}}).
\end{equation}

\noindent
Finally, let us introduce the following matrices:

\begin{equation}
    \label{equation:matrix_U}
    E = \frac{B + T^{1/2}}{2},
\end{equation}

\begin{equation}
    \label{equation:matrix_V}
    V = \frac{B - T^{1/2}}{2}.
\end{equation}

It is sufficient to show that the power of each of these matrices converges while $t \to \infty$. In particular, recall that $S^k$ -- with $S \in \mathcal{M}_n$ -- converges while $k \to \infty$ if and only if $\rho(S) \leq 1$ and the only eigenvalues with unit norm are real and positive numbers. Through this prism, notice that Matrix $E$ has eigenvalues of the following form:

\begin{equation}
    \label{equation:eigenvalues_U}
    \lambda_e = \frac{\sqrt{1 - \lambda} + 1 \pm \sqrt{\lambda} j}{2},
\end{equation}
where $\lambda \in [0, 1)$. Therefore, we can verify that $|\lambda_e| \leq 1$ and $|\lambda_e| = 1 \iff \lambda_e = 1$. Similarly, we can show that $|\lambda_v| < 1$. As a result, it follows that when $t \to \infty$, $V^t$ converges to the zero matrix, while $E^t$ also converges -- not necessarily to the zero matrix; thus, it follows that the dynamics of OGDA converge, completing the proof.

\subsection{Proof of \texorpdfstring{\Cref{proposition:convergence_rate}}{}}
\label{subsection:proof}

\begin{proof}

Let us assume that $\Lambda$ denotes the set of eigenvalues of $4\eta^2 A A^T$. In \Cref{subsection:proof_5}, we showed that the spectrum of matrices E \eqref{equation:matrix_U} and V \eqref{equation:matrix_V} can be described as follows:

\begin{equation}
    \tag{\ref{equation:eigenvalues_U}}
    \lambda_e = \frac{\sqrt{1 - \lambda} +1 \pm \sqrt{\lambda} j}{2},
\end{equation}

\begin{equation}
    \lambda_v = \frac{-\sqrt{1 - \lambda} + 1 \pm \sqrt{\lambda}j}{2},
\end{equation}
for $\lambda \in [0,1)$. Let $e(\lambda) = |\lambda_e|$ and $v(\lambda) = |\lambda_v|$; these functions capture the spectrum transformation from matrix $4\eta^2 A A^T$ to matrices $E$ and $V$ (see \Cref{figure:spectrum_transformation}). The first important observation is that for every $\lambda_1, \lambda_2 \in [0,1), e(\lambda_1) > v(\lambda_2)$; in words, $V^t$ converges faster than $E^t$ in the direction of every eigenvector. This implies that the convergence rate of $Q_t$ -- and subsequently the OGDA dynamics -- is determined by the rate of convergence of $E^t$. Moreover, $e(\lambda)$ is strictly decreasing in $\lambda$, while also note that the eigenvalues of $4\eta^2 A A^T$ are directly proportional to $\eta^2$. Therefore, the optimal convergence rate is obtained for the maximal value of $\eta$, that is for $\eta \uparrow 1/(2\gamma)$. Finally, let $\lambda_{min} = \min \{ \lambda \in \Lambda : \lambda \neq 0 \}$. Given that $e(\cdot)$ is a strictly decreasing function, the convergence rate of OGDA is $e(\lambda_{min}) \in (0,1)$. Note that the zero eigenvalues of $A A^T$ and $A^T A$ induce stationarity in the corresponding eigenvectors. We also remark that a lower bound for the convergence rate is $\sqrt{2}/2$, obtained when every non-zero eigenvalue of $4\eta^2 A A^T$ approaches to $1$. On the other hand, eigenvalues of $4\eta^2 A A^T$ with very small value will lead to a rate of convergence close to sub-linear (see \Cref{section:discontinuity}).

\begin{figure}[!ht]
    \centering
    \includegraphics[scale=0.4]{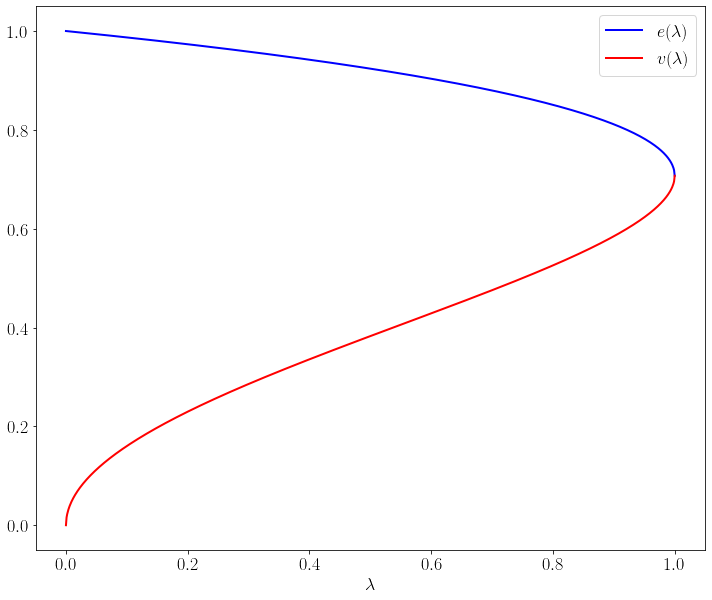}
    \caption{The spectrum transformation from matrix $4\eta^2 A A^T$ to matrices $E$ and $V$. The blue line corresponds to function $e(\lambda)$ and the red line to $v(\lambda)$. It is clear that $e(\lambda) > v(\lambda)$, while they intersect in the limit $\lambda \to 1^-$. }
    \label{figure:spectrum_transformation}
\end{figure}

\end{proof}

\subsection{Proof of \texorpdfstring{\Cref{proposition:nash_unconstrained}}{}}
\label{subsection:proof_6}

\begin{proof}

First, consider a pair of strategies $(\mathbf{x}, \mathbf{y})$ such that $A \mathbf{y} = \mathbf{0}$ and $A^T \mathbf{x} = \mathbf{0}$. We will show that $(\mathbf{x}, \mathbf{y})$ constitutes a Nash equilibrium pair. Indeed, if we fix the action of player $\mathcal{X}$, the objective function $\mathbf{x}^T A \mathbf{y}$ is nullified -- since $\mathbf{x} \in \mathcal{N}(A^T)$ -- independently from the action of player $\mathcal{Y}$; hence, deviating from $\mathbf{y}$ cannot increase her obtained utility from the game. A similar argument holds if we fix the action of player $\mathcal{Y}$.

Conversely, assume that player $\mathcal{Y}$ responds with a strategy $\mathbf{y}$ such that $A \mathbf{y} \neq \mathbf{0}$, that is there exists an index $i$ such that $e_i^T A \mathbf{y} = v \neq 0$ -- with $e_i \in \mathbb{R}^n$ the vector from the canonical orthonormal basis with $1$ in its $i^{\text{th}}$ coordinate. Then, the minimizer player $\mathcal{X}$ can respond with a strategy of $\mathbf{x} = - v e_i$ that will lead to a negative value for the objective function of $- v^2 < 0$. However, the maximizer player $\mathcal{Y}$ could deviate, responding with a vector of $\mathbf{y} = \mathbf{0}$ that increases the objective function to zero and thus improves her utility. Therefore, a vector $\mathbf{y} \notin \mathcal{N}(A)$ cannot correspond to a Nash equilibrium, and a similar argument applies for player $\mathcal{X}$. In other words, if $(\mathbf{x}, \mathbf{y})$ constitutes a Nash equilibrium, then $\mathbf{x} \in \mathcal{N}(A^T)$ and $\mathbf{y} \in \mathcal{N}(A)$.

\end{proof}

\subsection{Proof of \texorpdfstring{\Cref{proposition:nash_convergence}}{}}
\label{subsection:proof_7}

\begin{proof}
Let us denote with $\mathbf{x}_{\infty} = \lim_{t \to + \infty} \mathbf{x}_t$, and $\mathbf{y}_{\infty} = \lim_{t \to + \infty} \mathbf{y}_t,$. It follows from \Cref{equation:OGDA-bilinear} that

\begin{equation}
    \lim_{t \to + \infty} (\mathbf{x}_{t+1} - \mathbf{x}_t) = \mathbf{0} = - 2\eta A \mathbf{y}_{\infty} + \eta A \mathbf{y}_{\infty} \implies A \mathbf{y}_{\infty} = \mathbf{0},
\end{equation}

\begin{equation}
    \lim_{t \to + \infty} (\mathbf{y}_{t+1} - \mathbf{y}_t) = \mathbf{0} = - 2\eta A^T \mathbf{x}_{\infty} + \eta A^T \mathbf{x}_{\infty} \implies A^T \mathbf{x}_{\infty} = \mathbf{0}.
\end{equation}

\noindent
As a result, the claim follows from \Cref{proposition:nash_unconstrained}.

\end{proof}

\subsection{Proof of \texorpdfstring{\Cref{theorem:projection}}{}}
\label{subsection:proof-projection}

\begin{proof}

Let us denote with $Q_{\infty} = \lim_{t \to \infty} Q_t$ and introduce again the matrices $E = (B + T^{1/2})/2$ and $V=(B - T^{1/2})/2$. Moreover, let $\Lambda$ and $\overline{\Lambda}$ the set of eigenvalues of $4\eta^2 A A^T$ and $4\eta^2 A^T A$ respectively; given that these matrices are positive semi-definite and $\gamma < 1/(2\gamma)$, it follows that $\lambda \in [0, 1), \forall \lambda \in \Lambda \cup \overline{\Lambda}$. As we showed in \Cref{subsection:proof_5}, $B$ and $T^{1/2}$ are simultaneously unitarily diagonalizable, i.e. there exists a unitary matrix $U$ such that $U^* T^{1/2} U$ and $U^* B U$ are diagonal, with $U^*$ denoting the conjugate transpose of $U$. This property allowed us to identify the spectrum of matrices $E$ and $V$ from the individual spectrum of $B$ and $T^{1/2}$. In particular, it follows that $\lambda_e = (1 + \sqrt{1 - \lambda} \pm \sqrt{\lambda} j)/2$ and $\lambda_v = (1 - \sqrt{1 - \lambda} \pm \sqrt{\lambda}j)/2$, with $\lambda_e$ and $\lambda_v$ representing the eigenvalues of $E$ and $V$, $\lambda \in [0,1)$ and $j$ the imaginary unit. As a result, we obtain that $|\lambda_v| < 1$ and thus $V^t$ converges to the zero matrix, as $t \to \infty$. Similarly, in matrix $\lim_{t \to \infty} E^{t}$ every eigenvalue vanishes, except the stationary eigenvalues $\lambda_e = 1$; that is $|\lambda_e| \leq 1$ and $|\lambda_e| = 1 \iff \lambda_e = 1$.

Moreover, the eigenvectors of matrices $B$ and $T^{1/2}$ that correspond to these stationary eigenvalues serve as a basis for $\mathcal{N}(A)$ and $\mathcal{N}(A^T)$; more precisely, if $\mathbf{u}$ is a column of matrix $U$, it follows that the solution space of $B \mathbf{u} = \mathbf{u}$ coincides with the solution space of $T^{1/2} \mathbf{u} = \mathbf{u}$ ($\iff T \mathbf{u} = \mathbf{u}$). This equivalence derives from the equality of $\mathcal{N}(A)$ with $\mathcal{N}(A^T A)$ and $\mathcal{N}(A^T)$ with $\mathcal{N}(A A^T)$ (see \eqref{equation:eigenvectors_T} and \eqref{equation:eigenvectors_B} for $\mu = \mu' = 1$). Thus, since $U$ is a unitary matrix -- the columns form a orthonormal basis, it follows that the set of columns of $U$ that correspond to eigenvalues of $\lambda = 1$ can be written as

\begin{equation}
    \left\{
    \begin{pmatrix}
    \mathbf{0}_n \\
    \mathbf{y}
    \end{pmatrix}
    : \mathbf{y} \in B_y
    \right\}
    \mathsmaller{\bigcup}
    \left\{
    \begin{pmatrix}
    \mathbf{x} \\
    \mathbf{0}_m
    \end{pmatrix}
    : \mathbf{x} \in B_x
    \right\},
\end{equation}
where $B_y = \{\mathbf{y}^1, \dots, \mathbf{y}^{m-r}\}$ some orthonormal basis for $\mathcal{N}(A)$ and $B_x = \{ \mathbf{x}^1, \dots, \mathbf{x}^{n-r}\}$ some orthonormal basis for $\mathcal{N}(A^T)$. Next, note that the action of every other eigenvector asymptotically vanishes since the corresponding eigenvalue vanishes in norm. As a result, we obtain from simple algebraic calculations that

\begin{equation}
    \lim_{t \to \infty} E^t = \sum_{i=1}^{n-r}
    \begin{pmatrix}
    \mathbf{x}^i (\mathbf{x}^i)^T & \mathbf{0}_{n \times m} \\
    \mathbf{0}_{m \times n} & \mathbf{0}_{m \times m}
    \end{pmatrix}
    + \sum_{i=1}^{m-r}
    \begin{pmatrix}
    \mathbf{0}_{n \times n} & \mathbf{0}_{n \times m} \\
    \mathbf{0}_{m \times n} & \mathbf{y}^i (\mathbf{y}^i)^T
    \end{pmatrix}.
\end{equation}

In addition, the eigenvectors of $T^{1/2}$ and $T^{-1/2}$ are -- in correspondence -- equal and thus, it follows that $(\mathbf{I}_n - 4\eta^2 A A^T)^{-1/2} \mathbf{x} = \mathbf{x}, \forall \mathbf{x} \in B_x$ and similarly $(\mathbf{I}_m - 4\eta^2 A^T A)^{-1/2} \mathbf{y} = \mathbf{y}, \forall \mathbf{y} \in B_y$. Therefore, it follows that the action of $T^{-1/2}$ is idempotent with respect to $\lim_{t \to \infty} E^t:$

\begin{equation}
    \label{equation:Q-inf}
    Q_{\infty} = T^{-1/2} \lim_{t \to \infty} E^t = \sum_{i=1}^{n-r}
    \begin{pmatrix}
    \mathbf{x}^i (\mathbf{x}^i)^T & \mathbf{0}_{n \times m} \\
    \mathbf{0}_{m \times n} & \mathbf{0}_{m \times m}
    \end{pmatrix}
    + \sum_{i=1}^{m-r}
    \begin{pmatrix}
    \mathbf{0}_{n \times n} & \mathbf{0}_{n \times m} \\
    \mathbf{0}_{m \times n} & \mathbf{y}^i (\mathbf{y}^i)^T
    \end{pmatrix}.
\end{equation}

Let us return to \eqref{equation:OGDA-solution}; given that $A \mathbf{y} = \mathbf{0}, \forall \mathbf{y} \in B_y$ and $A^T \mathbf{x} = 0, \forall \mathbf{x} \in B_x$, it is easy to see that $Q_{\infty} C$ is the zero matrix. As a corollary, the initial conditions $(\mathbf{x}_{-1}, \mathbf{y}_{-1})$ do not influence the limit points of the dynamics. Finally, the theorem follows directly from \eqref{equation:Q-inf} and \eqref{equation:OGDA-solution}.

\end{proof}

\subsection{Proof of \texorpdfstring{\Cref{proposition:NE}}{}}

\begin{proof}
Let $(\mathbf{x}_i^*, \mathbf{y}_i^*)$ be an interior equilibrium of the game. Consider $\mathbf{x} \in \Delta_n$ and $\mathbf{y} \in \Delta_m$ such that $A^T \mathbf{x} = \mathbf{0}$ and $A \mathbf{y} = \mathbf{0}$; it is clear that $(\mathbf{x}, \mathbf{y})$ is a Nash equilibrium pair since any unilateral deviation from either player will not alter the expected utility from the game. For the other inclusion, we shall first show that $A \mathbf{y}_i^* = \mathbf{0}$ and $A^T \mathbf{x}_i^* = \mathbf{0}$. Indeed, if there was a single entry in vector $A \mathbf{y}_i^*$ strictly smaller than zero, then given that the strategy of the min player $\mathbf{x}_i^*$ is a best response to $\mathbf{y}_i^*$, the value of the game would be also strictly less than zero, contradicting the initial assumption. Moreover, if there was a single entry in $A \mathbf{y}_i^*$ strictly greater than zero, then the best response to $\mathbf{y}_i^*$ would not be an interior probability vector $\mathbf{x}_i^*$. An analogous argument shows that $A^T \mathbf{x}_i^* = \mathbf{0}$. Finally, consider any $(\mathbf{x}^*, \mathbf{y}^*) \in \NE$; then, it follows from \Cref{proposition:decoupling} that $(\mathbf{x}^*, \mathbf{y}_i^*) \in \NE$. Given that the value of the game is zero, every entry in $A^T \mathbf{x}^*$ is non-positive, and at the same time non-negative since the interior probability vector $\mathbf{y}_i^*$ is a best response to $\mathbf{x}^*$. Thus, $\mathbf{x}^* \in \mathcal{N}(A^T)$ and a similar argument proves that $\mathbf{y}^* \in \mathcal{N}(A)$.
\end{proof}

\subsection{Proof of \texorpdfstring{\Cref{lemma:B_power}}{}}
\label{subsection:proof_8}

\begin{proof}
The lemma will be proved through induction. First, the claim holds trivially for $k=1$. Next, we make the inductive hypothesis that

\begin{equation}
    B^k =
    \begin{bdmatrix}
    M_k(A) & -2\eta A N_k(A^T) \\
    2\eta A^T N_k(A) & M_k(A^T)
    \end{bdmatrix},
\end{equation}
for any $k \in \mathbb{N}$. It suffices to show that the claim follows for $k+1$. In particular, from the inductive hypothesis it follows that

\begin{equation}
    B^{k+1} =
    \begin{bdmatrix}
    M_k(A) - 4\eta^2 A N_k(A^T) A^T & -2\eta M_k(A) A -2\eta A N_k(A^T) \\
    2\eta A^T N_k(A) + 2\eta M_k(A^T) A^T & - 4\eta^2 A^T N_k(A) A + M_k(A^T)
    \end{bdmatrix}.
\end{equation}
\bigbreak
We will show that $M_k(A) - 4\eta^2 A N_k(A^T) A^T = M_{k+1}(A)$; the remainder of the claim will then admit an analogous proof. For the analysis we will use Pascal's formula:

\begin{equation}
    \binom{n}{k} = \binom{n-1}{k} + \binom{n-1}{k-1}.
\end{equation}

\bigbreak
\noindent
First, we consider the case where $k-1 \equiv 0 \mod 2$; then, it follows that

\begin{align}
    M_k(A) - 4\eta^2 A N_k(A^T) A^T &= \sum_{i=0}^{\lfloor \frac{k}{2} \rfloor} \binom{k}{2i} (-4\eta^2 A A^T)^i + \sum_{i=0}^{\lfloor \frac{k-1}{2} \rfloor} \binom{k}{2i+1} (-4\eta^2 A A^T)^{i+1} \\
    &= \sum_{i=0}^{\frac{k-1}{2}} \binom{k}{2i} (-4\eta^2 A A^T)^i + \sum_{i=1}^{\frac{k+1}{2}} \binom{k}{2i-1} (-4\eta^2 A A^T)^{i} \\
    &= \mathbf{I}_{n} + \sum_{i=1}^{\frac{k-1}{2}} \binom{k+1}{2i} (-4\eta^2 A A^T)^i + (-4\eta^2 A A^T)^{\frac{k+1}{2}} \\
    &= M_{k+1}(A),
\end{align}
where we have applied Pascal's formula in the third line. Similarly, if $k-1 \equiv 1 \mod 2$ it follows that

\begin{align}
    M_k(A) - 4\eta^2 A N_k(A^T) A^T &= \sum_{i=0}^{\lfloor \frac{k}{2} \rfloor} \binom{k}{2i} (-4\eta^2 A A^T)^i + \sum_{i=0}^{\lfloor \frac{k-1}{2} \rfloor} \binom{k}{2i+1} (-4\eta^2 A A^T)^{i+1} \\
    &= \sum_{i=0}^{\frac{k}{2}} \binom{k}{2i} (-4\eta^2 A A^T)^i + \sum_{i=1}^{\frac{k}{2}} \binom{k}{2i-1} (-4\eta^2 A A^T)^{i} \\
    &= \mathbf{I}_{n} + \sum_{i=1}^{\frac{k}{2}} \binom{k+1}{2i} (-4\eta^2 A A^T)^i \\
    &= M_{k+1}(A).
\end{align}

\end{proof}

\subsection{Proof of \texorpdfstring{\Cref{proposition:oscillations}}{}}
\label{subsection:proof_9}

\begin{proof}
First, recall that from the binomial theorem we have that for any $z \in \mathbb{C}$

\begin{equation}
    (1 + z)^{n} = \sum_{i=0}^n \binom{n}{i} z^i.
\end{equation}

If we apply this formula for $z = x e^{j \pi/2} = x j$ with $x \in \mathbb{R}$ and j the imaginary unit, it follows that

\begin{equation}
    \Re\left\{ (1 + x j)^n \right\} = \sum_{i=0}^{\lfloor \frac{n}{2} \rfloor} (-1)^i \binom{n}{2i} x^{2i}.
\end{equation}

\noindent
Moreover, if we express the complex number in trigonometric form we have that

\begin{equation}
    \Re\left\{ (1 + xj)^n \right\} = \Re\left\{ \left( \sqrt{1 + x^2} e^{j \tan^{-1}(x)} \right)^n \right\} = (1+x^2)^{n/2} \cos(n \tan^{-1}(x)).
\end{equation}

\noindent
As a result, we have proven that for every $x \in \mathbb{R}$

\begin{equation}
    \label{equation:identity_cos}
    \sum_{i=0}^{\lfloor \frac{n}{2} \rfloor} \binom{n}{2i} (-x^2)^{i} = (1+x^2)^{n/2} \cos(n \tan^{-1}(x)).
\end{equation}

\noindent
Similarly, we can show that

\begin{equation}
    \label{equation:identity_sin}
    x \sum_{i=0}^{\lfloor \frac{n-1}{2} \rfloor} \binom{n}{2i+1} (-x^2)^i = (1+x^2)^{n/2} \sin(n \tan^{-1}(x)).
\end{equation}

We remark that if we replace $x=1$, we obtain certain well known identities. We also stress that any trigonometric function can be defined for matrices through its Taylor expansion. Recall that the Taylor expansion of $\cos(x)$ and $\sin(x)$ converges for any $x \in \mathbb{R}$, while the Taylor expansion of $\tan^{-1}(x)$ converges for any $|x| \leq 1$; thus, $\tan^{-1}(2\eta(A A^T)^{1/2})$ will also converge for $\eta < 1/(2\gamma)$ and the lemma follows directly from \eqref{equation:identity_cos} and \eqref{equation:identity_sin}.

\end{proof}

\section{Asymptotic Analysis of Binomial Coefficient}
\label{section:binomial}

In our analysis we encountered the following binomial coefficient:

\begin{equation}
    \label{equation:binomial_term}
    \binom{n-i}{i}.
\end{equation}

In this section, we provide an asymptotic characterization of the behavior of this particular term. To be precise, it is well known that

\begin{equation}
    \binom{n}{i} \in \mathcal{O} \left( \frac{2^n}{\sqrt{n}} \right),
\end{equation}
for every $i$ such that $0 \leq i \leq n$ and the bound is asymptotically tight for $i = \lfloor n/2 \rfloor$. We derive an analogous expression for the term \eqref{equation:binomial_term}; in particular, if $\phi = (1 + \sqrt{5})/2$ denotes the golden ratio, we will prove that

\begin{equation}
    \binom{n - i}{i} \in \mathcal{O} \left( \frac{\phi^n}{\sqrt{n}} \right).
\end{equation}

We commence the analysis with a simple lemma that will be directly applied to prove the aforementioned bound.

\begin{lemma}
\label{lemma:aux}
Consider a function $f: (0, \frac{1}{2}) \mapsto \mathbb{R}^+$ defined as

\begin{equation}
    f(a) = \frac{(1-a)^{1-a}}{a^a(1-2a)^{1-2a}}.
\end{equation}
Then, $f$ obtains its maximum for $a^* = (5 - \sqrt{5})/10$, with $f(a^*) = \phi$.

\end{lemma}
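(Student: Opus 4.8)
The plan is to pass to logarithms and reduce the problem to elementary one-variable calculus. Set $g(a) = \ln f(a) = (1-a)\ln(1-a) - a\ln a - (1-2a)\ln(1-2a)$, which is well-defined and smooth on $(0,\tfrac{1}{2})$. Differentiating term by term (each term is of the form $h\ln h$), the constants cancel and one gets $g'(a) = -\ln(1-a) - \ln a + 2\ln(1-2a) = \ln\dfrac{(1-2a)^2}{a(1-a)}$. Hence $g'(a) = 0$ is equivalent to $(1-2a)^2 = a(1-a)$, i.e. $5a^2 - 5a + 1 = 0$, whose roots are $(5\pm\sqrt5)/10$; only $a^\ast = (5-\sqrt5)/10$ lies in $(0,\tfrac{1}{2})$.

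Next I would argue that $a^\ast$ is in fact the global maximizer. From the closed form $g'(a) = \ln\dfrac{(1-2a)^2}{a(1-a)}$ one reads off that, as $a$ ranges over $(0,\tfrac{1}{2})$, the numerator $(1-2a)^2$ is strictly decreasing and the denominator $a(1-a)$ is strictly increasing, so $g'$ is strictly decreasing; moreover $g'(a)\to+\infty$ as $a\to 0^+$ and $g'(a)\to-\infty$ as $a\to(\tfrac{1}{2})^-$. Therefore $g'$ changes sign exactly once, at $a^\ast$, and $g$ (hence $f$) increases on $(0,a^\ast)$ and decreases on $(a^\ast,\tfrac{1}{2})$, so $a^\ast$ is the unique global maximum. (Alternatively, if one prefers to avoid the monotonicity argument for $g'$, note that $f$ extends continuously to the closed interval with $f(0^+) = f((\tfrac{1}{2})^-) = 1$, so the unique interior critical point is forced to be the maximum.)

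It remains to compute $f(a^\ast)$. I would first record the identities $1-2a^\ast = 1/\sqrt5$, $1-a^\ast = (5+\sqrt5)/10 = \phi/\sqrt5$, and $a^\ast = (5-\sqrt5)/10 = \phi^{-1}/\sqrt5$ (using $\phi^{-1} = \phi - 1 = (\sqrt5-1)/2$), together with $\phi+\phi^{-1} = \sqrt5$ and $\phi-\phi^{-1} = 1$. Substituting $\ln(1-a^\ast) = \ln\phi - \tfrac{1}{2}\ln5$, $\ln a^\ast = -\ln\phi - \tfrac{1}{2}\ln5$, $\ln(1-2a^\ast) = -\tfrac{1}{2}\ln5$ into $g(a^\ast)$ and collecting terms gives $g(a^\ast) = \dfrac{1}{\sqrt5}\big[(\phi+\phi^{-1})\ln\phi + \tfrac{1}{2}(1-\phi+\phi^{-1})\ln5\big]$; the coefficient of $\ln5$ is $1-(\phi-\phi^{-1}) = 0$, and the coefficient of $\ln\phi$ is $\phi+\phi^{-1} = \sqrt5$, so $g(a^\ast) = \ln\phi$ and $f(a^\ast) = \phi$.

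The only mildly delicate point is this last evaluation: one must carry the three logarithmic terms through the substitution and invoke the golden-ratio identities to see the $\ln5$ contributions cancel exactly — everything else (the differentiation, the quadratic, and the sign analysis of $g'$) is routine.
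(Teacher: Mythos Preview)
Your proof is correct and follows essentially the same approach as the paper: take $g=\ln f$, locate the unique critical point via the first-order condition $(1-2a)^2=a(1-a)$, argue it is the global maximizer, and evaluate. The only cosmetic differences are that the paper invokes $g''<0$ (concavity) where you read off strict monotonicity of $g'$ directly from its closed form, and that you spell out the golden-ratio bookkeeping for $f(a^\ast)=\phi$ whereas the paper leaves that evaluation to the reader.
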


\begin{proof}
Consider the function $g(a) = \log f(a)$, which can be expressed as follows:

\begin{equation}
    g(a) = (1-a) \log (1-a) - a \log a - (1-2a) \log (1-2a).
\end{equation}

It is easy to check that $g''(a) < 0$ and hence $g$ is concave; moreover, it is trivial to show that there exists a point $a^*$ that maximizes $g$ and thus, it follows that

\begin{equation}
    \log(1-2a^*) = \frac{\log(1-a^*) + \log a^*}{2} \iff 1 - 2a^* = \sqrt{(1-a^*) a^*}.
\end{equation}

Therefore, since $a^* \in (0, \frac{1}{2})$, it follows that $a^* = (5 - \sqrt{5})/10$. It is clear that $a^*$ also maximizes function $f$ and thus, the claim follows if we evaluate $f$ at $a^*$.
\end{proof}

Let us return to the term \eqref{equation:binomial_term}; for an upper bound, it is sufficient to restrict our attention to the case of $i = \Theta(n)$, that is $i = \alpha \cdot n$ for some $\alpha \in (0, 1)$. We will use the following well-known formula:

\begin{lemma}
Stirling's Approximation Formula:

\begin{equation}
    n! \in \Theta\left( \sqrt{2 \pi n} \left( \frac{n}{e} \right)^n \right).
\end{equation}
\end{lemma}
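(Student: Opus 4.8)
The plan is to pass to logarithms and reduce the statement to a sufficiently precise estimate of $\log n! = \sum_{k=1}^{n} \log k$. Since the claim only asserts membership in $\Theta(\cdot)$, it suffices to prove that the sequence
\[
a_n := \frac{n!}{\sqrt{n}\,(n/e)^n} = \frac{n!\, e^n}{\sqrt{n}\, n^n}
\]
is bounded between two positive constants for all $n \geq 1$; the exact value $\sqrt{2\pi}$ of its limiting constant is not needed for the $\Theta$ statement (though it can be recovered, see below).

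The first step is to control the multiplicative increment $a_n / a_{n+1}$. A direct computation, using $\log a_n = \log n! + n - (n+\tfrac{1}{2})\log n$, gives the exact identity
\[
\log a_n - \log a_{n+1} = \Big( n + \tfrac{1}{2} \Big)\log\!\Big( 1 + \tfrac{1}{n} \Big) - 1 .
\]
Inserting the expansion $\log(1 + 1/n) = \tfrac{1}{n} - \tfrac{1}{2n^2} + \tfrac{1}{3n^3} - \cdots$ shows that the right-hand side equals $\tfrac{1}{12n^2} + O(1/n^3)$; in particular it is nonnegative for every $n\ge 1$ and is $O(1/n^2)$. Hence $(\log a_n)$ is monotone decreasing, and because $\sum_n \big(\log a_n - \log a_{n+1}\big)$ converges (dominated by a convergent $p$-series), the telescoping partial sums are bounded, so $(\log a_n)$ converges to a finite limit $C$. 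Exponentiating yields $a_n \to e^{C} =: L \in (0,\infty)$, i.e. $n! = (L + o(1))\,\sqrt{n}\,(n/e)^n$, which is exactly $\Theta\!\big(\sqrt{n}\,(n/e)^n\big)$ and, a fortiori, $\Theta\!\big(\sqrt{2\pi n}\,(n/e)^n\big)$.

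The one genuinely delicate point is obtaining the correct exponent $n^{1/2}$: the naive integral comparison $\int_1^n \log x\,dx \le \sum_{k=1}^n \log k \le \int_1^{n+1}\log x\,dx$ only gives $\log n! = n\log n - n + O(\log n)$, hence $n! = n^{O(1)}(n/e)^n$, which is too crude to isolate the $\sqrt{n}$ factor. One must instead measure $\log k$ against the trapezoidal approximation of $\int \log x\,dx$ and use that the per-term error is $O(1/k^2)$ because $\log$ is concave with controlled second derivative — this is precisely the content of the $\log(1+1/n)$ expansion above, so the monotone-convergence argument is the right vehicle. Finally, should the explicit constant be desired, one identifies $L = \sqrt{2\pi}$ by feeding $a_n \to L$ into Wallis' product $\prod_{k\ge 1}\frac{2k}{2k-1}\cdot\frac{2k}{2k+1} = \frac{\pi}{2}$ via the central ratio $(2n)!/(n!)^2$; this last step, however, is unnecessary for the $\Theta$ bound invoked in the paper.
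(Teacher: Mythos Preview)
Your argument is correct and is in fact the classical route to Stirling's formula via the monotone sequence $a_n = n!\,e^n/(n^{n+1/2})$. One small quibble: from the expansion $(n+\tfrac12)\log(1+\tfrac1n)-1 = \tfrac{1}{12n^2}+O(1/n^3)$ you only get nonnegativity for large $n$, not for every $n\ge 1$; the all-$n$ monotonicity is true (it is the standard fact that $(1+1/n)^{n+1/2}$ decreases to $e$), but as you yourself observe, monotonicity is not needed for the $\Theta$ bound---absolute summability of the increments already forces $\log a_n$ to converge, which is all you use.

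As for comparison with the paper: there is nothing to compare. The paper does not prove this lemma at all; it is simply quoted as a ``well-known formula'' and invoked as a black box in the asymptotic analysis of $\binom{n-i}{i}$. So your write-up supplies a self-contained proof where the paper offers none.
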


Note that the asymptotic notation is with respect to variable $n$. Therefore, if we apply this formula it follows that

\begin{align}
    \binom{n-i}{i} &= \frac{((1-\alpha)n)!}{(\alpha n)! ((1 - 2\alpha)n)!} \notag \\
                  &\in \Theta \left( \frac{1}{\sqrt{n}} \left(\frac{(1-\alpha)^{1-\alpha}}{\alpha^\alpha (1-2\alpha)^{1-2\alpha}} \right)^n \right).
\end{align}
Thus, if we apply \Cref{lemma:aux} we obtain a bound of $\mathcal{O}(\phi^n/\sqrt{n})$ which is asymptotically tight for $i = \lfloor n (5 - \sqrt{5})/10 \rfloor$.

\begin{figure}[!ht]
    \centering
    \includegraphics[scale=0.48]{./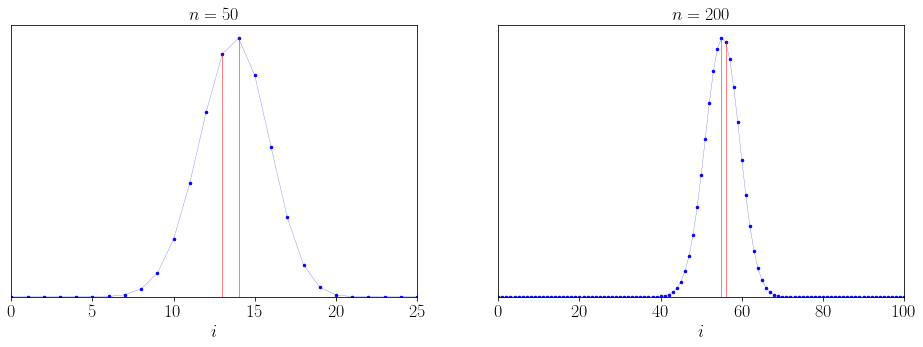}
    \caption{The behavior of the term \eqref{equation:binomial_term} while $i$ progresses from $0$ to $\lfloor n/2 \rfloor$. The red lines are positioned in $i = \lfloor n (5 - \sqrt{5})/10 \rfloor$ and $i = \lceil n (5 - \sqrt{5})/10 \rceil$, and one of them corresponds to the maximum obtained value of the binomial sequence.}
\end{figure}

\end{document}